\newtheorem{theorem}{Theorem}[section]
\newtheorem{corollary}[theorem]{Corollary}
\newtheorem{lemma}[theorem]{Lemma}
\newtheorem{proposition}[theorem]{Proposition}
\newtheorem{definition-proposition}[theorem]{Definition-Proposition}
\theoremstyle{definition}
\newtheorem{definition}[theorem]{Definition}
\newcommand{\kk}{{\mathbbm{k}}}
\newcommand{\q}{{\mathfrak{q}}}
\newcommand{\ok}{{\mathfrak{o}}}
\newcommand{\tk}{{\mathfrak{t}}}
\newcommand{\B}{{\mathcal{B}}}
\newcommand{\Ext}{\mbox{Ext}}
\newcommand{\Tor}{\mbox{Tor}}
\newcommand{\I}{\mbox{\rm Id}}
\newcommand{\Hom}{\mbox{\rm Hom}}
\newcommand{\ch}{\mbox{\rm Char}}
\newcommand{\Imm}{\mbox{\rm Im}}
\newcommand{\Ker}{\mbox{\rm Ker}}
\newcommand{\rank}{\mbox{\rm rank}}
\newcommand{\w}{\mbox{$\wedge^{\ast}$}}
\newcommand{\boti}{\,\bar{\otimes}\,}
\numberwithin{equation}{section}
\begin{document}
\title[BV structure on Hochschild cohomology of zigzag algebra]
{Batalin-Vilkovisky structure on Hochschild cohomology of zigzag
algebra of type $\widetilde{\mathbf{A}}_{1}$}
\thanks{2000 Mathematics Subject Classification: 16E40, 16E10, 16G10.}
\thanks{Keywords: Batalin-Vilkovisky structure, Hochschild (co)homology,
Zigzag algebras, Gerstenhaber bracket product}
\thanks{The authors are supported by NSFC(Nos.11771122,\, 11801141 and 11961007). }

\author{Bo Hou,\ \ Jin Gao }
\address{School of Mathematics and Statistics, Henan University,
Kaifeng 475001, PR China.}
\email{bohou1981@163.com, \ \ gaojin528@163.com.}

\maketitle
\begin{abstract}
In this paper, we study the Batalin-Vilkovisky structure on the Hochschild cohomology
of quantum zigzag algebras $A_{\q}$ of type $\widetilde{\mathbf{A}}_{1}$.
We first calculate the dimensions of Hochschild homology groups and Hochschild
cohomology groups of $A_{\q}$. Based on these computations, we determine the Hochschild
cohomology ring of $A_{\q}$, and give the Batalin-Vilkovisky
operator and the Gerstenhaber bracket on Hochschild
cohomology ring of $A_{\q}$ explicitly.
\end{abstract}

\bigskip
\section{Introduction}\label{intro}
Let $\Lambda$ be an algebra (associative with
unity) over a field $\kk$. Denote by
$\Lambda^{e}:=\Lambda\otimes_{\kk}\Lambda^{op}$ the enveloping
algebra of $\Lambda$. Then the $i$-th Hochschild homology and
Hochschild cohomology of $\Lambda$ are identified with the
$\kk$-spaces (see \cite{CE})
$$
HH_{i}(\Lambda)=\Tor^{\Lambda^{e}}_{i}(\Lambda, \Lambda),\qquad\qquad
HH^{i}(\Lambda)=\Ext^{i}_{\Lambda^{e}}(\Lambda, \Lambda),
$$
respectively.
The Hochschild homology and Hochschild cohomology of an algebra
are subtle variants of associative algebras and have played a
fundamental role in representation theory of artin algebras.
Hochschild homology is closely related to the
oriented cycle and the global dimension of algebras;
Hochschild cohomology is closely related to
simple connectedness, separability and deformation theory.
The cohomology ring $HH^{\ast}(\Lambda)=
\bigoplus_{i\geq0}HH^{i}(\Lambda)$ is a graded commutative algebra
under the cup product, and is a graded Lie algebra under the Gerstenhaber bracket
(see \cite{Ger}).

During several decades, a new structure in Hochschild theory
has been extensively studied in topology and mathematical physics,
and recently this was introduced into algebra, the
Batalin-Vilkovisky structure (BV structure for short).
A BV structure exists only on Hochschild cohomology of certain
special classes of algebras. For example, Tradler has founded that
the Hochschild cohomology algebra of a finite-dimensional
symmetric algebra is a BV algebra \cite{T}; Lambre, Zhou Zimmermann, and
independently Volkov have showed that the Hochschild cohomology ring
of a Frobenius algebra with semisimple Nakayama automorphism is
a BV algebra by different methods, and generalized Tradler's result
(see \cite{LZZ} and \cite{Vo}).
On the other hand, Ginzburg has proved that there is a BV algebraic
structure on the Hochschild cohomology of a Calabi-Yau algebra \cite{Gi};
Kowalzig and Kr$\ddot{a}$hmer have generalized Ginzburg's conclusion to twisted
Calabi-Yau algebras \cite{KK}.

For some special algebra classes, we already know that there is
a BV algebraic structure on its Hochschild cohomology ring,
but it is very difficult to describe this structure concretely.
Up to now, there has been little research in this field, mainly focusing
on group algebras and local algebras. The BV algebraic structure on the
Hochschild cohomology of a class of truncated polynomial algebras
was calculated by Yang \cite{Y};
In \cite{Iv} and \cite{V}, the BV algebraic structure on the
Hochschild cohomology of some class of local algebras of generalized
quaternion type is described over a field of characteristic two.
In \cite{LZ}, the authors have given a description of the BV
structure on the Hochschild cohomology ring for symmetric group
of degree 3 over $\mathbb{F}_{3}$; Ivanov, Ivanov, Volkov, and
Zhou have computed the BV structure on the Hochschild
cohomology ring of the group algebra $\kk Q_{8}$ over an algebraically
closed field of characteristic two \cite{IIVZ}; Volkov has calculated
the BV structure on the Hochschild cohomology ring of a family of
self-injective algebra of tree type $D_{n}$ \cite{Vol}. Recently, Angel and
Duarte have studied the BV structure on the Hochschild
cohomology ring of finitely generated abelian groups \cite{AD}.
In this and a subsequent paper, we consider a class of
non-group non-local algebras, that is, zigzag algebras. More precisely,
in this paper, we shall deal with (quantum) zigzag algebras.

Zigzag algebras were introduced by Huerfano and Khovanov
in their categorification of the adjoint representation of simply-laced
quantum groups \cite{HK}. Such algebras appear in various places in modern mathematics,
especially in categorification (see \cite{Su}, \cite{X}, \cite{HK},
\cite{KS}, \cite{KMS}, \cite{EK} etc). In \cite{ET}, the authors have studied some
algebraic properties of zigzag algebras and certain generalization of them.
The Hochschild cohomology and the Hochschild cohomology ring modulo
nilpotence of zigzag algebras of type $\widetilde{\mathbf{A}}$ have
been studied in \cite{ST}, \cite{ST1} and \cite{PS}.
In this paper, we consider a broader class of algebras, the quantum
zigzag algebras of type $\widetilde{\mathbf{A}}_{1}$.
These algebras are Koszul self-injective special biserial algebras
and play an important role in representation theory.
Here, the dimensions of Hochschild (co)homology groups, the cup product
and the Gerstenhaber bracket product on the quantum zigzag algebras
of type $\widetilde{\mathbf{A}}_{1}$ are clearly described.
Moreover, the BV structure on the Hochschild cohomology ring of zigzag algebra of type
$\widetilde{\mathbf{A}}_{1}$ is given.

This article is organized as follows. In the second section,
we review the definitions of Hochschild homology and
cohomology, cup product, Gerstenhaber bracket product and
BV algebra. In the third section, we provide a
minimal projective bimodule resolution of quantum zigzag algebras
$A_{\q}$ of type $\widetilde{\mathbf{A}}_{1}$, and
by using the language of closed paths, we calculate the $\kk$-dimensions
of Hochschild homology groups and cyclic homology
groups of $A_{\q}$. Here, we give a positive answer to Han's conjecture for $A_{\q}$.
In the fourth section, we give a explicit basis of each degree
of Hochschild cohomology groups of $A_{\q}$ by the parallel paths.
In the fifth section, we prove that the product of cohomology rings
of $A_{\q}$ is essentially the connection of parallel paths.
By using previous calculations,
the structure of Hochschild cohomology ring of algebras $A_{\q}$
and Hochschild cohomology ring modulo nilpotence are clearly depicted.
An positive answer to Snashall-Solberg conjecture is given for $A_{\q}$.
In the final section, we construct two comparison morphisms between the
minimal projective bimodule resolution given in Section 3 and the
reduced bar resolution of $A_{\q}$. Using these comparison morphisms,
and applying Tradler's construction to the zigzag algebra $A_{-1}$,
we get the BV operator and Gerstenhaber bracket on Hochschild cohomology
ring of $A_{-1}$. If $\q\neq-1$, $A_{\q}$ is not a symmetric algebra,
but a Frobenius algebra with semisimple Nakayama automorphism.
Using the bilinear form constructed by Volkov in \cite{Vo},
we give an exact description of the BV operator and Gerstenhaber bracket
on Hochschild cohomology ring of $A_{\q}$ case by case.

Throughout this paper, we fix $\kk$ a field and often write $\otimes$
in place of $\otimes_{\kk}$ for brevity.

\bigskip
\section{Hochschild (co)homology of associative algebra}\label{(co)homology}

The cohomology theory of associative algebras was introduced by
Hochschild (see \cite{Ho}).
Let $\Lambda$ be an associative algebra over a field $\kk$. The Hochschild
cohomology $HH^{\ast}(\Lambda)$ of $\Lambda$ has a very rich structure.
In this section, we recall the cup product, the Gerstenhaber bracket and
Batalin-Vilkovisky structure in Hochschild cohomology.

For an associative $\kk$-algebra $\Lambda$, there is a
projective bimodule resolution ${\mathbb{B}}=(B_{m}, d_{m})$ of $\Lambda$
as following:
$$
\xymatrix@C=1.5em{
\cdots \ar[r] & \Lambda^{\otimes(m+2)}\ar[r]^{d_{m}}
& \Lambda^{\otimes(m+1)}\ar[r]&\cdots\ar[r]
& \Lambda^{\otimes(3)}\ar[r]^{d_{1}}& \Lambda^{\otimes(2)}\ar[r]^{d_{0}}
& \Lambda\ar[r]& 0  }
$$
where $d_{0}$ is the multiplication map, $B_{m}=A^{\otimes (m+2)}$ for $m\geq 0$,
and $d_{m}$ is defined by
$$
d_{m}(a_{0}\otimes a_{1}\otimes\cdots\otimes a_{m+1})=
\sum_{i=0}^{m}(-1)^{i}
a_{0}\otimes\cdots\otimes a_{i-1}\otimes a_{i}a_{i+1}\otimes
a_{i+2}\otimes \cdots\otimes a_{m+1},
$$
for any $a_{0}, a_{1}, \cdots, a_{m+1}\in\Lambda$.
This is called the bar resolution of $\Lambda$.

Let $\{e_{1},e_{2},\cdots,e_{l}\}$ be a complete set of primitive
orthogonal idempotents of $\Lambda$, $E$ the subalgebra of $\Lambda$
generated by $\{e_{1},e_{2},\cdots,e_{l}\}$.
Denote by $\bar{\Lambda}=\Lambda/E$, the quotient $\kk$-module,
and $\bar{B}_{m}=\Lambda\otimes_{E}
\bar{\Lambda}^{\otimes_{E} m}\otimes_{E} \Lambda$.
Then the quotients $\bar{B}_{m}$ constitute a complex
$\bar{{\mathbb{B}}}=(\bar{B}_{m}, \bar{d}_{m})$,
where the differential $\bar{d}_{m}$ induced from $d_{m}$,
for all $m\geq0$. The complex $\bar{{\mathbb{B}}}$ is also a
projective bimodule resolution of $\Lambda$, which is called the
reduced bar resolution of $\Lambda$.

Applying functor $\Hom_{\Lambda^{e}}(-, \Lambda)$ to the complex
${\mathbb{B}}$, we get a complex
$\Hom_{\Lambda^{e}}({\mathbb{B}}, \Lambda)$. Note that for each $m\geq 0$,
$\Hom_{\Lambda^{e}}(B_{m}, \Lambda)\cong\Hom_{\kk}(\Lambda^{\otimes m}, \Lambda)$,
we can use the complex ${\mathbb{C}}=(C^{m},\; \delta^{m})$ to calculate
the Hochschild cohomology of $\Lambda$, where
$C^{m}=\Hom_{\kk}(\Lambda^{\otimes m}, \Lambda)$, and
$$
\begin{aligned}
\delta^{m}(f)(a_{1}\otimes\cdots\otimes a_{m+1})
=&a_{1}f(a_{2}\otimes\cdots\otimes a_{m+1})\\
&+\sum_{i=1}^{m}(-1)^{i}f(a_{1}\otimes\cdots\otimes a_{i-1}\otimes
a_{i}a_{i+1} \otimes a_{i+2}\otimes\cdots\otimes a_{m+1})\\
&+(-1)^{m+1}f(a_{1}\otimes \cdots \otimes a_{m})a_{m+1},
\end{aligned}
$$
for any $f\in\Hom_{\kk}(\Lambda^{\otimes m}, \Lambda)$ and
$a_{1}\otimes \cdots\otimes a_{m+1}\in\Lambda^{\otimes m+1}$.

The cup product $\alpha\sqcup\beta\in C^{m+l}(\Lambda)
=\Hom_{\kk}(\Lambda^{\otimes (m+l)}, \Lambda)$
for $\alpha\in C^{m}(\Lambda)$  and $\beta\in C^{l}(\Lambda)$
is given by
$$
(\alpha\sqcup\beta)(a_{1}\otimes\cdots\otimes a_{m+l})=
\alpha(a_{1}\otimes\cdots\otimes a_{m})
\beta(a_{m+1}\otimes\cdots\otimes a_{m+l}).
$$
This cup product induces a well-defined product in Hochschild cohomology
$$
\sqcup:\quad HH^{m}(\Lambda)\times HH^{l}(\Lambda)\longrightarrow
HH^{m+l}(\Lambda),
$$
which turns the graded $\kk$-vector space $HH^{\ast}(\Lambda)=
\bigoplus_{i\geq0}HH^{i}(\Lambda)$ into a graded commutative algebra.

Besides addition and multiplication, there is another binary operation
on $HH^{\ast}(\Lambda)$, which is called Gerstenhaber bracket.
Let $\alpha\in C^{m}(\Lambda)$ and $\beta\in C^{l}(\Lambda)$.
If $m, l\geq 1$, then for $1\leq i\leq m$, define
$\alpha\widehat{\circ}_{i}\beta\in C^{l+m-1}(\Lambda)$ by
$$
\begin{aligned}
&(\alpha\widehat{\circ}_{i}\beta)(a_{1}\otimes\cdots\otimes a_{m+l-1})\\
=&\alpha(a_{1}\otimes\cdots\otimes a_{i-1}\otimes
\beta(a_{i}\otimes\cdots\otimes a_{i+l-1})\otimes a_{i+l}\otimes
\cdots \otimes a_{m+l-1});
\end{aligned}
$$
if $m\geq 1$ and $l=0$, then $\beta\in\Lambda$ and for
$1\leq i\leq m$, define
$$
(\alpha\widehat{\circ}_{i}\beta)(a_{1}\otimes\cdots\otimes a_{m-1})
=\alpha(a_{1}\otimes\cdots\otimes a_{i-1}\otimes \beta
\otimes a_{i}\otimes \cdots \otimes a_{m-1});
$$
for any other case, $\alpha\widehat{\circ}_{i}\beta=0$.
Now we can define the Gerstenhaber bracket. Let
$$
\alpha\widehat{\circ}\beta
=\sum_{i=1}^{m}(-1)^{(l-1)(i-1)}\alpha\widehat{\circ}_{i}\beta,
$$
and
$$
[\alpha,\; \beta]=\alpha\widehat{\circ}\beta-
(-1)^{(m-1)(l-1)}\beta\widehat{\circ}\alpha.
$$
The above $[\ \;,\;\ ]$ induces a well-defined graded Lie bracket in
Hochschild cohomology
$$
[\ \;,\;\ ]:\quad HH^{m}(\Lambda)\times HH^{l}(\Lambda)
\longrightarrow HH^{m+l-1}(\Lambda)
$$
This graded Lie bracket is usually called the Gerstenhaber bracket in
$HH^{\ast+1}(\Lambda)$. It is well-known that
$(HH^{\ast}(\Lambda),\; \sqcup,\; [\ \;,\;\ ])$ is a Gerstenhaber
algebra (see \cite{Ger}). That is, the following conditions hold:
\begin{itemize}
\item[(1)] $(HH^{\ast}(\Lambda),\; \sqcup)$ is an associative algebra;

\item[(2)] $(HH^{\ast+1}(\Lambda),\; [\ \;,\;\ ])$ is a graded Lie algebra
with bracket $[\ \;,\;\ ]$ of degree $-1$;

\item[(3)] $[f\sqcup g,\; h]=[f,\; h]\sqcup g+(-1)^{|f|(|h|-1)}f\sqcup[g,\; h]$,
where $|f|$ denotes the degree of $f$.
\end{itemize}
What we want to explain here is that if we use the reduced bar resolution to replace
the bar resolution, we can also get the  Gerstenhaber algebraic structure on
$HH^{\ast}(\Lambda)$ by using the same formula to define the cup product and
the Gerstenhaber bracket.

If there is an operator on Hochschild cohomology
which squares to zero and together with the cup product
can express the Lie bracket, then it is a Batalin-Vilkovisky algebra.
Let us review the definition of Batalin-Vilkovisky algebra
(see, for example \cite{T}).

\begin{definition} A Batalin-Vilkovisky algebra is a Gerstenhaber algebra
$(\Lambda^{\bullet},\; \sqcup,\; [\ \,,\,\ ])$ together with an operator
$\Delta: \Lambda^{\bullet} \rightarrow\Lambda^{\bullet-1}$ of degree
$-1$ such that $\Delta\circ\Delta=0$ and
$$
[a,\, b]=-(-1)^{(|a|-1)|b|}\Big(\Delta(a\sqcup b)-\Delta(a)\sqcup b
-(-1)^{|a|}a\sqcup \Delta(b)\Big)
$$
for homogeneous elements $a, b\in\Lambda^{\bullet}$.
\end{definition}

It seems that the definition of Batalin-Vilkovisky algebra is different
from the conventional one, see \cite{Ge}, but they are isomorphic. This
definition is chosen here to facilitate the calculation of the BV operator.
For any associative $\kk$-algebra with unity, In \cite{Ger},
the author proved that $(HH^{\ast}(\Lambda),\; \sqcup,\; [\ \;,\;\ ])$
is always a Gerstenhaber algebra. However, for a given algebra,
obtain this structure concretely, that is, detailed describe the
cup product and Gerstenhaber bracket product is very difficult.
The BV operator $\Delta$ does not always exist for the Hochschild
cohomology ring $HH^{\ast}(\Lambda)$ of an algebra $\Lambda$.
So far, we only know that there is a BV operator on Hochschild
cohomology ring of a few kinds of algebras.
Fortunately, for the zigzag algebras of type $\widetilde{\mathbf{A}}$
all of these algebraic structures on Hochschild cohomology can be clearly
depicted in this and a future paper \cite{Hou}.

\bigskip
\section{Hochschild homology groups of $A_{\q}$}\label{homology}

In this section, we construct a minimal projective bimodule resolution
of the quantum zigzag algebras $A_{\q}$, and give a equivalent
description of the homology complex obtained by this minimal
projective bimodule resolution. Furthermore, the dimensions of Hochschild
homology groups of $A_{\q}$ are given explicitly.

Recall that the zigzag algebra of type $\widetilde{\mathbf{A}}_{1}$
is given by the quiver $Q$:
$$
\setlength{\unitlength}{1.6mm}
\begin{picture}(0,12)
\put(-10,6){\circle*{0.6}}    \put(10,6){\circle*{0.6}}
\qbezier(-8.5,7)(0,12)(8.5,7)  \put(7.5,7.5){\vector(2,-1){1}}
\put(-8,6.3){\vector(1,0){16}}
\qbezier(-8.5,5)(0,0)(8.5,5)   \put(-7.5,4.5){\vector(-2,1){1}}
\put(8,5.5){\vector(-1,0){16}}
\put(-3,7){$\alpha_{1}$}   \put(1,4){$\alpha_{2}$}
\put(-1,10){$\beta_{2}$}    \put(-1,0.5){$\beta_{1}$}
\put(-12,5){$1$} \put(11,5){$2$}
\end{picture}
$$
with relations
$\{\alpha_{1}\alpha_{2},\; \alpha_{2}\alpha_{1},\;
\beta_{1}\beta_{2},\; \beta_{2}\beta_{1},\;
\alpha_{1}\beta_{1}-\beta_{2}\alpha_{2},\;
\alpha_{2}\beta_{2}-\beta_{1}\alpha_{1}\}$,
where the composition of paths are from left to right.
Here we consider a broader class of algebras, the quantum zigzag algebras.
The quantum zigzag algebras $A_{\q}$ of type $\widetilde{\mathbf{A}}_{1}$,
are given by the quotient algebras $A_{\q}=\kk Q/I$,
where the ideal $I$ of $\kk Q$ are generated by
$$
R:=\{\alpha_{1}\alpha_{2},\ \ \alpha_{2}\alpha_{1},\ \
\beta_{1}\beta_{2},\ \ \beta_{2}\beta_{1},\ \
\alpha_{1}\beta_{1}+\q\beta_{2}\alpha_{2},\ \
\alpha_{2}\beta_{2}+\q\beta_{1}\alpha_{1}\}.
$$
Here we always assume $\q\neq0$. It is easy to see
that the set $R$ is just a (noncommutative) quadratic
Gr\"{o}bner basis of $I$. Therefore, $\Lambda_{\q}$ is
a Koszul algebra for each $\q\neq 0$ (see \cite{GH}).
Note that $A_{\q}$ is just the zigzag algebra whenever $\q=-1$,
we often record the zigzag algebra of type $\widetilde{\mathbf{A}}_{1}$
as $A_{-1}$.
Denote by $e_{1}$, $e_{2}$ the primitive orthogonal idempotents
corresponding to the vertices $1$ and $2$, respectively. Let
$$
\B:=\{e_{i},\, \alpha_{i},\, \beta_{i},\, \alpha_{i}\beta_{i}\mid i=1, 2\}.
$$
Then $\B$ is a $\kk$-basis of $A_{\q}$, and so that $\dim_{\kk}A_{\q}=8$.

We now construct a minimal projective bimodule resolution for algebra
$A_{\q}$ using the approach of \cite{GHM}. Firstly, setting
$$
\begin{aligned}
&F^{0}:=\left\{f^{0}_{(1,0)}=e_{1},\ \ f^{0}_{(2,0)}=e_{2}\right\}; \\
&F^{1}:=\left\{f^{1}_{(1,1)}=\alpha_{1},\ \ f^{1}_{(2,1)}=\alpha_{2},\ \
f^{1}_{(2,0)}=\beta_{1},\ \ f^{1}_{(1,0)}=\beta_{2}\right\}.
\end{aligned}
$$
For $m\geq2$, we define inductively the set $F^{m}=\left\{f^{m}_{(i,j)}\mid
0\leq j\leq m,\, i=1, 2\right\}$ by
$$
f^{m}_{(i,j)}=\alpha_{i}f^{m-1}_{(i+1,j-1)}+\q^{j}\beta_{i-1}f^{m-1}_{(i-1,j)}.
$$
Then $|F^{m}|=2(m+1)$, and $f^{m}_{(i,j)}=\q^{m-j}f^{m-1}_{(i,j-1)}
\alpha_{i-m-1}+f^{m-1}_{(i,j)}\beta_{i-m}$, where $f^{m-1}_{(i,j)}
=f^{m-1}_{(i',j)}$ if $i\equiv i' \mod2$,
$\alpha_{i}=\alpha_{i'}$ if $i\equiv i' \mod2$,
$\beta_{j}=\beta_{j'}$ if $j\equiv j' \mod2$, and $f^{m-1}_{(i,j)}=0$
if $j<0$ or $j>m-1$.

For any path $p\in Q$, we denote by $\ok(p)$ and $\tk(p)$ the originals and
terminus of $p$. Recall that a non-zero element $x=\sum^{s}_{i=1}a_{i}p_{i}\in\kk Q$,
where $a_{i}\in\kk$ and $p_{i}$ is a path in $Q$,
is said to be uniform if there exist vertices $u, v\in Q_{0}$
such that $\ok(p_{i})=u$ and $\tk(p_{i})=v$ for all paths $p_{i}$.
It is easy to see that elements $f^{m}_{(i,j)}$ are uniform.
Thus for each $f\in F^{m}$, we denote by $\ok(f)$ and $\tk(f)$ the common
originals and terminus of all the paths occurring in $f$,
and always identify $\ok(f)$ and $\tk(f)$ with their corresponding idempotents.

Let
$$
P_{m}:=\bigoplus_{f\in F^{m}}A_{\q}\ok(f)\otimes\tk(f)A_{\q}.
$$
Define $d_{1}:P_{1}\rightarrow P_{0}$ by
$$
d_{1}\left(\ok(f)\otimes\tk(f)\right)=f\otimes\tk(f)-\ok(f)\otimes f,
$$
for $f\in F^{1}$. Whenever $m\geq2$, the differential
$d_{m}:P_{m}\rightarrow P_{m-1}$ is given by:
$$
\begin{aligned}
d_{m}\left(\ok(f^{m}_{(i,j)})\otimes\tk(f^{m}_{(i,j)})\right)
=&\alpha_{i}\otimes\tk(f^{m-1}_{(i+1,j-1)})
+(-1)^{m}\q^{m-j}\ok(f^{m-1}_{(i,j-1)})\otimes\alpha_{i-m-1}\\
&\quad+\q^{j}\beta_{i-1}\otimes\tk(f^{m-1}_{(i-1,j)})
+(-1)^{m}\ok(f^{m-1}_{(i,j)})\otimes\beta_{i-m}.
\end{aligned}
$$

\begin{proposition} \label{pro3.1}
The complex
${\mathbb{P}}=(P_{m},d_{m})$:
$$
\xymatrix@C=2em{
\cdots \ar[r]& P_{m+1}\ar[r]^{d_{m+1}} & P_{m}\ar[r]&\cdots\ar[r]
& P_{2}\ar[r]^{d_{2}}& P_{1}\ar[r]^{d_{1}}& P_{0}\ar[r]^{d_{0}}
& A_{\q}\ar[r]& 0  }
$$
is a minimal projective bimodule resolution of $A_{\q}$, where
$d_{0}$ is the multiplication map.
\end{proposition}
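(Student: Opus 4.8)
The plan is to recognise $\mathbb{P}$ as the (two-sided) Koszul complex of the Koszul algebra $A_{\q}$, following the recipe of \cite{GHM}. Since each $P_m=\bigoplus_{f\in F^m}A_{\q}\ok(f)\otimes\tk(f)A_{\q}$ is a finite direct sum of modules of the form $A_{\q}e_i\otimes e_jA_{\q}$, it is a projective $A_{\q}$-bimodule, and $d_0$ is surjective; so what has to be shown is that $d_md_{m+1}=0$, that $\mathbb{P}$ is exact in positive degrees, and that it is minimal.

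The starting point is a lemma: the two prescriptions
$$
f^{m}_{(i,j)}=\alpha_{i}f^{m-1}_{(i+1,j-1)}+\q^{j}\beta_{i-1}f^{m-1}_{(i-1,j)}
=\q^{m-j}f^{m-1}_{(i,j-1)}\alpha_{i-m-1}+f^{m-1}_{(i,j)}\beta_{i-m}
$$
define one and the same homogeneous element $f^m_{(i,j)}$ of degree $m$ in the path algebra $\kk Q$. I would prove this by induction on $m$: substitute the right-hand recursion for $f^{m-1}$ into the left expression and the left-hand recursion for $f^{m-1}$ into the right expression, and check that the four resulting terms match pairwise --- this uses only the conventions that all indices are read modulo $2$ and that $f^{m-1}_{(\cdot,j)}=0$ unless $0\le j\le m-1$. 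Granting this, an induction shows that each $f^m_{(i,j)}$ lies in
$$
K_m:=\bigcap_{a=0}^{m-2}\kk Q_a\otimes_{\kk Q_0}\langle R\rangle\otimes_{\kk Q_0}\kk Q_{m-2-a}\subseteq\kk Q_m ,
$$
the degree-$m$ component of the Koszul complex (with $K_1=\kk Q_1$, $K_2=\langle R\rangle$): the left recursion places $f^m_{(i,j)}$ in $\kk Q_1\otimes_{\kk Q_0}K_{m-1}=\bigcap_{a=1}^{m-2}\kk Q_a\otimes\langle R\rangle\otimes\kk Q_{m-2-a}$, the right recursion places it in $K_{m-1}\otimes_{\kk Q_0}\kk Q_1=\bigcap_{a=0}^{m-3}\kk Q_a\otimes\langle R\rangle\otimes\kk Q_{m-2-a}$, and the intersection of these two subspaces of $\kk Q_m$ is exactly $K_m$ (here one uses that $\kk Q_0$ is semisimple, so tensoring is exact and commutes with intersections). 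A further induction gives that the $2(m+1)$ elements of $F^m$ are linearly independent, and since $A_{\q}$ is Koszul a Hilbert-series computation yields $\dim_{\kk}K_m=2(m+1)$; hence $F^m$ is a $\kk$-basis of $K_m$ and $P_m\cong A_{\q}\otimes_{\kk Q_0}K_m\otimes_{\kk Q_0}A_{\q}$, with the generator $\ok(f^m_{(i,j)})\otimes\tk(f^m_{(i,j)})$ corresponding to $1\otimes f^m_{(i,j)}\otimes 1$.

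It then remains to observe that the differential $d_m$ written in the statement is precisely the two-sided Koszul differential: its first and third summands, carrying no sign, reconstruct $f^m_{(i,j)}$ via the left recursion (stripping off the leading arrow), while its second and fourth summands, carrying the sign $(-1)^m$, reconstruct $f^m_{(i,j)}$ via the right recursion (stripping off the trailing arrow); for $m=1$ this specialises to $d_1(\ok(f)\otimes\tk(f))=f\otimes\tk(f)-\ok(f)\otimes f$. As $A_{\q}$ is Koszul --- its defining set $R$ is a quadratic Gr\"{o}bner basis, as already noted --- the two-sided Koszul complex $\mathbb{P}$ is a projective bimodule resolution of $A_{\q}$ (see \cite{GHM}); in particular $d_md_{m+1}=0$ and $\mathbb{P}$ is exact in positive degrees. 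Minimality is then immediate: for $m\ge1$ every summand of $d_m$ is left or right multiplication by an arrow, so $\Imm d_m\subseteq\mathrm{rad}(A_{\q})P_{m-1}+P_{m-1}\mathrm{rad}(A_{\q})=\mathrm{rad}(P_{m-1})$.

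I expect the main obstacle to be the combinatorial bookkeeping: proving that the left and right recursions for $f^m_{(i,j)}$ agree, and that $F^m$ is linearly independent of cardinality $2(m+1)$, both require careful inductions tracking the indices modulo $2$, the powers of $\q$, and the vanishing conventions. Once the identification $P_\bullet\cong A_{\q}\otimes_{\kk Q_0}K_\bullet\otimes_{\kk Q_0}A_{\q}$ with the Koszul complex is in hand, exactness and minimality need nothing further. If one prefers to avoid the Koszul-dual language, $d_md_{m+1}=0$ can instead be verified directly (each composite is a sum of terms each containing a length-two subpath, and the relations $R$ annihilate these), and exactness can be checked one internal degree at a time, where in each fixed degree $\mathbb{P}$ is a finite complex of finite-dimensional $\kk$-spaces whose homology is read off from the ranks of the $d_m$.
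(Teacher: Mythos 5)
Your proposal is correct and follows essentially the same route as the paper: both identify $\mathbb{P}$ with the Koszul bimodule resolution of the Koszul algebra $A_{\q}$ in the sense of \cite{BK} and \cite{GHM}, reduce the claim to showing that $F^m$ is a $\kk$-basis of $K_m=\bigcap_{s+r=m-2}X^sRX^r$, and obtain $\dim_\kk K_m=2(m+1)$ from Koszul duality (the paper via the explicit Yoneda algebra $\kk Q^{op}/I^\perp$, you via the equivalent Hilbert-series argument) combined with the linear independence of the $f^m_{(i,j)}$. Your added details --- the verification that the left and right recursions for $f^m_{(i,j)}$ agree, and the explicit radical argument for minimality --- are points the paper leaves implicit, but they do not change the structure of the argument.
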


\begin{proof} Now we consider the minimal projective
bimodule resolution of $A_{\q}$ constructed in \cite[section
9]{BK}. Let $X=\{\alpha_{1}, \alpha_{2}, \beta_{1}, \beta_{2}\}$.
Since $A_{\q}$ is a Koszul algebra for each $\q$, we only need to prove that
$F^{m}$ is a $\kk$-basis of the $\kk$-vector space $K_{m} :=
\bigcap\limits_{s+r=m-2}X^{s}RX^{r}$.

Note that $XK_{m-1}\cap K_{m-1}X\subset K_{m}$, for all $m$, and for
all $0\leq j\leq m,\, i=1, 2$, we have $f^{m}_{(i,j)}\in K_{m}$
by induction on $m$. Denote
by $I^{\perp}$ the ideal of $\kk(Q^{op})$ generated by
$$
R^{\perp}=\left\{\q\beta_{1}^{op}\alpha_{1}^{op}-
\alpha_{2}^{op}\beta_{2}^{op},\ \ \q\beta_{2}^{op}\alpha_{2}^{op}-
\alpha_{1}^{op}\beta_{1}^{op}\right\}.
$$
Then, the algebra $\kk(Q)^{op}/I^{\perp}$ is isomorphic to the Yoneda
algebra $E(A_{\q})$ of $A_{\q}$, since $A_{\q}$ is
Koszul (cf. \cite[Theorem 2.10.1]{BGS}). Therefore, the Betti
number of a minimal projective resolution of $A_{\q}$ over
$A_{\q}^{e}$ is $\{b_{m}=2(m+1)\}_{m\geq0}$. Thus $\dim_{\kk}K_{m}=2(m+1)$ for all $m\geq0$.
Note that the elements in $F^{m}$ is $\kk$-linearly independent, we get
the elements in $F^{m}$ is a $\kk$-basis of $K_{m}$.

Finally, by \cite[section 9]{BK} and \cite{GHM}, we get the
differential $d_{m}$ is given as above.
\end{proof}

Let $X$ and $Y$ be two sets of uniform elements in $\kk Q$. Then one can define
$$
X\odot Y=\{(p, q)\in
X\times Y \mid{\mathfrak{t}}(p)={\mathfrak{o}}(q) \mbox{ and }
{\mathfrak{t}}(q)={\mathfrak{o}}(p)\},
$$
and denote by
$\kk(X\odot Y)$ the vector space spanned by the
elements in $X\odot Y$. A pair of uniform elements
$(p,q)$ in $\kk Q$ is called closed if
$(p,q)\in\kk Q\odot \kk Q$.

Consider the set $\B\odot F^{m}$, then we have
$$
\B \odot F^{m}=\left\{\begin{array}{ll}
\left\{(\alpha_{i}, f^{m}_{(i+1,j)}),\; (\beta_{i}, f^{m}_{(i,j)})\mid
i=1, 2,\; 0\leq j\leq m\right\},  & \mbox{if }m\mbox{ is odd}; \\
\left\{(e_{i}, f^{m}_{(i,j)}),\; (\alpha_{i}\beta_{i}, f^{m}_{(i,j)})\mid
i=1, 2,\; 0\leq j\leq m\right\},
& \mbox{if }m\mbox{ is even}.
\end{array}\right.
$$
That is $|\B\odot F^{m}|=4(m+1)$.

Applying functor $A_{\q}\otimes_{A_{\q}^{e}}-$ to the
minimal projective bimodule resolution ${\mathbb{P}}=(P_{n},d_{n})$,
we get a Hochschild homology complex of the algebra $A_{\q}$.
Now, we use vector spaces $\kk(\B\odot F^{n})$ to give a presentation
of this Hochschild homology complex.

\begin{lemma} \label{lem3.2}
As complexes, $A_{\q}\otimes_{A_{\q}^{e}}{\mathbb{P}}\cong{\mathbb{N}}$,
where the complex ${\mathbb{N}}=(N_{m}, \tau_{m})$, $N_{m}=\kk(\B\odot F^{m})$
and differential $\tau_{m}: N_{m}\rightarrow N_{m-1}$ is given by:
for any $(b, f^{m}_{(i,j)})$ in $\kk(\B\odot F^{m})$,
$$
\begin{aligned}
\tau_{m}(b, f^{m}_{(i,j)})=(b\alpha_{i}, &f^{m-1}_{(i+1,j-1)})
+(-1)^{m}\q^{m-j}(\alpha_{i-m-1}b, f^{m-1}_{(i,j-1)})\\
&+\q^{j}(b\beta_{i-1}, f^{m-1}_{(i-1,j)})
+(-1)^{m}(\beta_{i-m}b, f^{m-1}_{(i,j)}).
\end{aligned}
$$
\end{lemma}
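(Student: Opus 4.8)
The plan is to make the isomorphism $A_{\q}\otimes_{A_{\q}^e}\mathbb{P}\cong\mathbb{N}$ completely explicit and then check that it is a chain map. First I would identify the term $A_{\q}\otimes_{A_{\q}^e}P_m$. Since $P_m=\bigoplus_{f\in F^m}A_{\q}\ok(f)\otimes\tk(f)A_{\q}$, and for a bimodule of the form $A_{\q}e\otimes e'A_{\q}$ one has the standard natural isomorphism $A_{\q}\otimes_{A_{\q}^e}\big(A_{\q}e\otimes e'A_{\q}\big)\cong e'A_{\q}e$ sending $a\otimes(xe\otimes e'y)\mapsto e'yaxe$, we get $A_{\q}\otimes_{A_{\q}^e}P_m\cong\bigoplus_{f\in F^m}\tk(f)A_{\q}\ok(f)$. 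Now $\tk(f)A_{\q}\ok(f)$ has as $\kk$-basis exactly those $b\in\B$ with $\ok(b)=\tk(f)$ and $\tk(b)=\ok(f)$, i.e. precisely the pairs $(b,f)\in\B\odot F^m$. This is where the earlier computation of $\B\odot F^m$ is used: it both identifies the basis and confirms $\dim_\kk\big(A_{\q}\otimes_{A_{\q}^e}P_m\big)=|\B\odot F^m|=4(m+1)$. So the $\kk$-linear isomorphism $\Phi_m:A_{\q}\otimes_{A_{\q}^e}P_m\to N_m$ is forced: it sends the class of $1\otimes(\ok(f)\otimes\tk(f))$ in the $f$-summand, after multiplying by a basis element $b$, to $(b,f)$ (more precisely, $\Phi_m$ matches the generator coming from $f$ with the formal symbol, and extends by the module action so that $b\otimes(\ok(f)\otimes\tk(f))\mapsto(b,f)$ whenever this is a legal closed pair).

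Next I would verify that these $\Phi_m$ commute with the differentials, i.e. $\Phi_{m-1}\circ(\mathrm{id}_{A_{\q}}\otimes d_m)=\tau_m\circ\Phi_m$. This is a direct computation: apply $\mathrm{id}_{A_{\q}}\otimes d_m$ to $1\otimes(\ok(f^m_{(i,j)})\otimes\tk(f^m_{(i,j)}))$ using the explicit formula for $d_m$ given just before Proposition~\ref{pro3.1}, then push the result through the identification $A_{\q}\otimes_{A_{\q}^e}P_{m-1}\cong\bigoplus\tk(g)A_{\q}\ok(g)$. Each of the four terms of $d_m$ is of the form (element of $A_{\q}$)$\otimes$(idempotent) or (idempotent)$\otimes$(element of $A_{\q}$) in a summand indexed by some $f^{m-1}_{(\cdot,\cdot)}$; under $A_{\q}\otimes_{A_{\q}^e}(-)$ a left tensor factor $\alpha_i\otimes\tk(g)$ in the $g$-summand becomes right multiplication of the coefficient by $\alpha_i$, while a right tensor factor $\ok(g)\otimes\alpha_{i-m-1}$ becomes left multiplication by $\alpha_{i-m-1}$; similarly for the $\beta$ terms. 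Matching term by term against the four summands of $\tau_m(b,f^m_{(i,j)})$ — namely $(b\alpha_i,f^{m-1}_{(i+1,j-1)})$, $(-1)^m\q^{m-j}(\alpha_{i-m-1}b,f^{m-1}_{(i,j-1)})$, $\q^j(b\beta_{i-1},f^{m-1}_{(i-1,j)})$, and $(-1)^m(\beta_{i-m}b,f^{m-1}_{(i,j)})$ — shows the two agree, with the signs and the powers of $\q$ carried along unchanged from the definition of $d_m$. One should also note that whenever a term of $d_m$ lands in a summand indexed by $f^{m-1}_{(i,j)}$ with $j<0$ or $j>m-1$ the corresponding set $F^{m-1}$ entry is $0$ (this convention is already fixed in the excerpt), so that boundary cases on both sides vanish simultaneously.

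The only genuinely delicate point — and the step I expect to require the most care — is bookkeeping the indices modulo $2$ together with the $(-1)^m$ signs in the right-hand tensor slot: under the isomorphism $A_{\q}\otimes_{A_{\q}^e}(A_{\q}e\otimes e'A_{\q})\cong e'A_{\q}e$, a term $\ok(g)\otimes\alpha_{i-m-1}$ attached on the \emph{right} produces a \emph{left} multiplication, so one must be careful that the arrow $\alpha_{i-m-1}$ (whose index depends on the parity of $m$ via the rule $\alpha_k=\alpha_{k'}$ for $k\equiv k'\bmod 2$) really equals the arrow $\alpha_{i-m-1}$ appearing in $\tau_m$, and likewise $\beta_{i-m}$ versus $\beta_{i-m}$. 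I would double-check parity once for $m$ even and once for $m$ odd, using the explicit description of $\B\odot F^m$ to confirm that in each parity the closed pairs $(b,f)$ that actually occur are exactly the ones for which all four output pairs in $\tau_m$ are again legal closed pairs in $\B\odot F^{m-1}$. Once the square commutes for the chosen generators, $\kk$-linearity and the $A_{\q}$-module structure extend it to all of $N_m$, and since each $\Phi_m$ is a bijection on bases it is an isomorphism of complexes, which is the assertion of Lemma~\ref{lem3.2}.
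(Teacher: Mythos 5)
Your proposal is correct and follows essentially the same route as the paper: identify $A_{\q}\otimes_{A_{\q}^{e}}P_{m}$ with $\bigoplus_{f\in F^{m}}\tk(f)A_{\q}\ok(f)\cong\kk(\B\odot F^{m})$ via the standard tensor identity, then check that the induced differential is $\tau_{m}$. The paper's proof merely asserts the commutative diagram, whereas you spell out the term-by-term verification (including the point that a right-hand tensor factor becomes a left multiplication), but this is the same argument in more detail.
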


\begin{proof}
Let $E$ be the maximal semisimple
subalgebra of $A_{\q}$. Then one can check that
$$
A_{\q}\otimes_{A_{\q}^e}P_{m}= A_{\q}\otimes_{E^{e}}
\bigoplus_{f\in F^{n}}(\ok(f)\otimes\tk(f))
\cong\bigoplus_{\alpha,\beta\in\{e_{1}, e_{2}\}}
\alpha A_{\q}\beta\otimes\beta F^{m}\alpha.
$$
Thus $A_{\q}\otimes_{A_{\q}^e}P_{m}\cong N_{m}$ as $\kk$-vector spaces.
Moreover, from the isomorphisms above, we have the commutative diagram
$$
\xymatrix{
\cdots\ar[r]&A_{\q}\otimes_{A_{\q}^{e}}P_{m}\ar[r]^{1\otimes d_{m}}
\ar[d]^{\varphi^{m}} &A_{\q}\otimes_{A_{\q}^{e}}P_{m-1}\ar[r]
\ar[d]^{\varphi^{m+1}} &\cdots\\
\cdots\ar[r]&\kk(\B\odot F^{m})\ar[r]^{\tau_{m}}
&\kk(\B\odot F^{m-1})\ar[r]&\cdots .}
$$
So differential $\tau_{m}$ can be induced by $d_{m}$ in the
minimal projective resolution ${\mathbb{P}}$.
\end{proof}

By the definition of the Hochschild homology, $HH_{m}(A_{\q})
=\Ker\tau_{m}/ \Imm\tau_{m+1}$ and the isomorphism above, we have
$$
\begin{aligned}
\dim_{\kk}HH_{m}(A_{\q})&=\dim_{\kk}\Ker\tau_{m}
-\dim_{\kk}\Imm\tau_{m+1}\nonumber\\
&=\dim_{\kk}N_{m}-\dim_{\kk}\Imm\tau_{m}-\dim_{\kk}\Imm\tau_{m+1}.
\end{aligned}
$$
Consequently, to calculate the dimensions of Hochschild homology
groups of $A_{\q}$, we only need to determine $\dim_{\kk}\Imm\tau_{m}$
for all $m\geq1$, since $\dim_{\kk}N_{m}=|\B\odot F^{m}|$.
For $m=1, 2$, by the descriptions of the differentials $\tau_{m}$ in
Lemma \ref{lem3.2}, direct calculation shows that
$$
\dim_{\kk}\Imm\tau_{1}=\left\{\begin{array}{ll}
1, &\quad \mbox{if } \q=\pm1; \\
2, &\quad \mbox{otherwise},
\end{array}\right.
\quad\mbox{ and }\quad
\dim_{\kk}\Imm\tau_{2}=\left\{\begin{array}{ll}
3, &\quad \mbox{if } \q=\pm1; \\
4, &\quad \mbox{otherwise}.
\end{array}\right.
$$
Therefore, we have
$$
\dim_{\kk}HH_{0}(A_{\q})=\left\{\begin{array}{ll}
3, &\quad \mbox{if } \q=\pm1; \\
2, &\quad \mbox{otherwise},
\end{array}\right.
\quad\mbox{ and }\quad
\dim_{\kk}HH_{1}(A_{\q})=\left\{\begin{array}{ll}
4, &\quad \mbox{if } \q=\pm1; \\
2, &\quad \mbox{otherwise}.
\end{array}\right.
$$
For $\Imm\tau_{m}$, $m\geq2$, we first express it as a matrix, and make a detailed
analysis of it. Define an order on $\B$ by setting $e_{1}\prec e_{2}\prec\alpha_{1}\prec
\alpha_{2}\prec\beta_{1}\prec\beta_{2}\prec\alpha_{1}\beta_{1}\prec\alpha_{2}\beta_{2}$,
and denote $\B=\B_{1}\cup\B_{2}\cup\B_{3}\cup\B_{4}$, where $\B_{1}=\{e_{1}, e_{2}\}$,
$\B_{2}=\{\alpha_{1}, \alpha_{2}\}$, $\B_{3}=\{\beta_{1}, \beta_{2}\}$ and
$\B_{1}=\{\alpha_{1}\beta_{1}, \alpha_{2}\beta_{2}\}$.
Then we can define an order on $\B\odot F^{m}$ by
$$
\begin{aligned}
(b, f^{m}_{(i,j)})\prec (b', f^{m}_{(i',j')})\quad \mbox{ if }
l< k, &\mbox{ or }l=k \mbox{ but }j<j',\\
&\mbox{ or }l=k, j=j' \mbox{ but }b<b',
\end{aligned}
$$
for any $(b,f^{m}_{(i,j)})$, $(b',f^{m}_{(i',j')})\in\B\odot F^{m}$ with
$b\in\B_{l}$ and $b'\in\B_{k}$.
We still denote by $\tau_{m}$ the matrix of the differentials $\tau_{m}$
under the ordered bases above. Through the detailed analysis of
matrix $\tau_{m}$, the dimension of each degree homology groups
of algebra $A_{\q}$ will be given explicitly.

Denote by $A_{i}$ and $B_{i}$ the $2\times 2$ matrices
${\small\left(\begin{array}{cc}
\q^{i}&\q^{-1}\\   \q^{-1}&\q^{i}
\end{array}\right)}$,
${\small\left(\begin{array}{cccccccc}
1&\q^{i}\\    \q^{i}&1
\end{array}\right)}$
respectively. For any positive integer $l, k$ and $l\times k$
matrix $A$, we denote $\bar{A}$ the $2l\times k$ matrix
${\small\left(\begin{array}{cc}
A\\ 0    \end{array}\right)}$,
denote $\widehat{A}$ the $2l\times k$ matrix
${\small\left(\begin{array}{cc}
0\\ A    \end{array}\right)}$.
Then, following from the descriptions of the
differentials $\tau_{m}$ in Lemma \ref{lem3.2}, we obtain

\begin{itemize}
\item[(1)] if $m$ is odd,
$\tau_{m}=\left(\begin{array}{cc}
0&C_{m}\\     0&D_{m}
\end{array}\right)_{4(m+1)\times 4m}$, where
$C_{m}={\rm diag}\left\{A_{0}, A_{1}, \cdots, A_{m-2},
\bar{A}_{m-1}\right\}$ and
$D_{m}={\rm diag}\left\{-\widehat{A}_{m-1}, -A_{m-2},
\cdots,-A_{1}, -A_{0}\right\}$ are $2(m+1)\times 2m$ matrices;

\item[(2)] if $m$ is even,
$\tau_{m}=\left(\begin{array}{cc}G_{m}&H_{m}\\  0&0
\end{array}\right)_{4(m+1)\times 4m}$, where
$G_{m}={\rm diag}\left\{\widehat{B}_{m-1}, B_{m-2}, \cdots, B_{1},
B_{0}\right\}$ and
$H_{m}={\rm diag}\left\{B_{0}, B_{1}, \cdots, B_{m-2},
\bar{B}_{m-1}\right\}$ are $2(m+1)\times 2m$ matrices.
\end{itemize}

Therefore, we get the following results.

\begin{lemma} \label{lem3.3}
For the differential $\tau_{m}$, $m\geq2$, we have
\begin{itemize}
\item[(1)] if $\q$ is not a root of unity, $\rank\tau_{m}=2m$;

\item[(2)] if $\q=\pm 1$, $\rank\tau_{m}=\left\{\begin{array}{ll}
m, &\quad \mbox{ if }m\mbox{ is odd}; \\
m+1, &\quad \mbox{ if }m\mbox{ is even};
\end{array}\right.$

\item[(3)] if $\q$ is a primitive $s$-th $(s>2)$ root of unity,
$$
\rank\tau_{m}=\left\{\begin{array}{ll}
2m-2l+1, &\quad \mbox{if }s\mbox{ is odd and }m=2ls-1; \\[-0.3em]
 & \qquad\mbox{or }s\mbox{ is even and }m=ls-1;\\
2m, &\quad \mbox{otherwise}.
\end{array}\right.
$$
\end{itemize}
\end{lemma}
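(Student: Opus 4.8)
The plan is to read off the rank of each $\tau_m$ from the explicit block-diagonal shape given just before the statement, and then treat the three arithmetic cases for $\q$ separately. The matrix $\tau_m$ is, up to reordering and sign, a direct sum of the $2\times 2$ blocks $A_i$ (in the odd case) or $B_i$ (in the even case), together with the two ``defective'' blocks $\bar A_{m-1},\widehat A_{m-1}$ (resp. $\bar B_{m-1},\widehat B_{m-1}$), which each contribute at most the rank of a single $2\times 2$ matrix sitting inside a $4\times 2$ (resp. $2\times 2$ inside $2\times 2$ with a zero companion) frame. So the computation of $\rank\tau_m$ reduces to: (i) counting how many of the $2\times 2$ blocks are invertible; (ii) for the non-invertible ones, recording whether the rank drops to $1$ or to $0$; and (iii) adding in the fixed contribution of the boundary blocks. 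Since $\det A_i = \q^{2i}-\q^{-2}$ and $\det B_i = 1-\q^{2i}$, the block $A_i$ (resp. $B_i$) is singular exactly when $\q^{2i+2}=1$ (resp. $\q^{2i}=1$), and in that singular case one checks directly that the rank is still $1$ unless the block is the zero matrix, which happens only when $\q^2=1$ forces all entries equal. This last observation is the organizing principle for all three parts.

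For part (1), $\q$ not a root of unity: then $\q^{2i+2}\neq 1$ and $\q^{2i}\neq 1$ for every relevant $i$, so every $2\times 2$ block $A_i$ or $B_i$ is invertible. In the odd case $C_m$ has the $m-1$ full blocks $A_0,\dots,A_{m-2}$ contributing $2(m-1)$ to the rank plus $\bar A_{m-1}$ contributing $2$ (it is a nonsingular $2\times 2$ matrix padded with zeros), so $\rank C_m = 2m$; since $C_m$ and $D_m$ occupy disjoint rows and columns of $\tau_m$ and $D_m$ likewise has rank $2m$, but $C_m$ and $D_m$ share the column block indexed by $\B\odot F^{m-1}$, one must be slightly careful — here the clean way is to note $\tau_m$ has $4m$ columns and that its image is the direct sum over the $m$ homogeneous ``$j$-strata'' of the images of the constituent blocks, each stratum contributing rank $2$, giving $\rank\tau_m = 2m$. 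The even case is identical with $B_i$ in place of $A_i$.

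For parts (2) and (3) the same stratum-by-stratum bookkeeping applies, but now some blocks degenerate. For $\q=\pm1$: every $A_i$ becomes $\left(\begin{smallmatrix}1&\pm1\\ \pm1&1\end{smallmatrix}\right)$ up to signs, which has rank $1$ (never $0$), and every $B_i$ likewise has rank $1$; tallying $m$ strata each of rank $1$ in the odd case, plus the extra rank coming from the asymmetry of $\widehat A_{m-1}$ versus $\bar A_{m-1}$ being absent here, gives $\rank\tau_m=m$ for $m$ odd, and the even case picks up one more from the $\widehat B_{m-1}$/$\bar B_{m-1}$ mismatch, giving $m+1$. For $\q$ a primitive $s$-th root of unity with $s>2$: a block $A_i$ is singular iff $2i+2\equiv 0\ (\mathrm{mod}\ s)$ and $B_i$ singular iff $2i\equiv 0\ (\mathrm{mod}\ s)$; since $s>2$ these congruences have solutions among $i\in\{0,\dots,m-1\}$ only for special $m$, and when they do the singular block still has rank $1$, so each such occurrence costs exactly one unit of rank. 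Counting the number of $i$ in the relevant range with $\q^{2i}=1$ (resp. $\q^{2i+2}=1$), which is controlled by $\lfloor\cdot/s\rfloor$-type expressions, and separating the parity of $s$ (because $\q^{2i}=1\iff s\mid 2i$ behaves differently according as $s$ is odd or even), yields precisely the stated $2m$ versus $2m-2l+1$ dichotomy at $m=2ls-1$ ($s$ odd) or $m=ls-1$ ($s$ even).

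I expect the main obstacle to be the careful accounting in part (3): one must identify, for given $m$ and $s$, exactly which of the indices $i=0,1,\dots,m-1$ make $A_i$ or $B_i$ drop rank, confirm that the drop is by one and not two, and check that the contributions of the two boundary blocks $\bar A_{m-1},\widehat A_{m-1}$ (or their $B$-analogues) do not coincidentally vanish. The parity split of $s$ enters here because $s\mid 2i$ is equivalent to $s\mid i$ when $s$ is odd but to $\tfrac{s}{2}\mid i$ when $s$ is even, which shifts by a factor of two the value of $m$ at which the first total collapse occurs; keeping this straight, together with the off-by-one coming from whether $m$ itself is the critical index $2ls-1$ or $ls-1$, is the delicate part. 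Everything else is routine linear algebra on $2\times 2$ matrices.
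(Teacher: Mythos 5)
Your reduction of $\rank\tau_m$ to a block computation is right in outline, and your part (1) is essentially the paper's argument once you switch to the ``$j$-strata'': for $m$ odd the $j$-th column stratum of $\tau_m$ is the $4\times 2$ matrix $\left(\begin{smallmatrix}A_j\\ -A_{m-1-j}\end{smallmatrix}\right)$, for $m$ even the strata are the $2\times 4$ rows $\left(\begin{smallmatrix}B_{m-i}& B_i\end{smallmatrix}\right)$ together with two isolated copies of $B_0$, and distinct strata occupy disjoint rows and columns, so the rank is the sum of the stratum ranks.

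The gap is in how you evaluate those stratum ranks in parts (2) and (3). Your ``organizing principle'' is that a rank drop occurs exactly when an individual $2\times 2$ block $A_i$ (or $B_i$) is singular, i.e.\ when $\q^{2i+2}=1$ (resp.\ $\q^{2i}=1$), and you count such $i$. But each stratum pairs two blocks, and $\rank\left(\begin{smallmatrix}A_i\\ A_{m-1-i}\end{smallmatrix}\right)=1$ requires all four rows to be proportional, i.e.\ both $A_i$ and $A_{m-1-i}$ singular \emph{and} with the same row space; a short computation shows this is equivalent to $s\mid 2i+2$ together with $s\mid m+1$ (similarly $\rank\left(\begin{smallmatrix}B_{m-i}& B_i\end{smallmatrix}\right)=1$ iff $s\mid 2i$ and $s\mid m$). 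If only one of the two paired blocks is singular, the stratum still has rank $2$ and nothing is lost. Your criterion would therefore predict rank drops for every $m$ large enough that $s\mid 2i+2$ has a solution $i\le m-1$, contradicting the ``otherwise $\rank\tau_m=2m$'' clause of part (3); the divisibility condition on $m$ itself ($s\mid m+1$, resp.\ $s\mid m$), which is precisely what confines the drop to the exceptional values of $m$, never appears in your argument. The same omission touches part (2): for $\q=-1$ the row space of $A_i$ is spanned by $(1,(-1)^{i+1})$, so the compatibility of $A_i$ with $A_{m-1-i}$ inside a stratum uses the parity of $m$ and is not automatic from each block having rank $1$. This joint, $m$-dependent degeneracy condition is the essential content of the lemma and is missing from the proposal.
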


\begin{proof}
If $m$ is odd, then
$$
\rank\tau_{m}=\rank{\small\left(\begin{array}{cc}
A_{0}\\ A_{m-1} \end{array}\right)}
+\rank{\small\left(\begin{array}{cc}
A_{1}\\ A_{m-2} \end{array}\right)}+\cdots+
\rank{\small\left(\begin{array}{cc}
A_{m-1}\\ A_{0} \end{array}\right)}.
$$
Note that $\rank{\small\left(\begin{array}{cc}
A_{i}\\ A_{m-1-i} \end{array}\right)}=1$ if and only if
$\q=\pm 1$, or $\q$ is a primitive $s$-th $(s>2)$ root of unity,
$s|m+1$ and $s|2i+2$, we have
\begin{itemize}
\item[(1)] if $\q$ is not a root of unity, $\rank\tau_{m}=2m$;

\item[(2)] if $\q=\pm 1$, $\rank\tau_{m}=m$;

\item[(3)] if $\q$ is a primitive $s$-th $(s>2)$ root of unity,
$$
\rank\tau_{m}=\left\{\begin{array}{ll}
2m-2l+1, &\quad \mbox{if }s\mbox{ is odd and }m=2ls-1, \\[-0.3em]
 & \qquad\mbox{or }s\mbox{ is even and }m=ls-1;\\
2m, &\quad \mbox{otherwise}.
\end{array}\right.
$$
\end{itemize}

If $m$ is even, then
$$
\rank\tau_{m}=2\rank B_{0}
+\rank{\small\left(\begin{array}{cc}
B_{m-1}& B_{1} \end{array}\right)}
+\rank{\small\left(\begin{array}{cc}
B_{m-2}& B_{2} \end{array}\right)}+\cdots+
\rank{\small\left(\begin{array}{cc}
B_{1}& B_{m-1} \end{array}\right)}.
$$
Note that $\rank B_{0}=1$ and $\rank{\small\left(\begin{array}{cc}
B_{m-i}& B_{i} \end{array}\right)}=1$ if and only if
$\q=\pm 1$, or $\q$ is a primitive $s$-th $(s>2)$ root of unity,
$s|m$ and $s|2i$, we have
\begin{itemize}
\item[(1)] if $\q$ is not a root of unity, $\rank\tau_{m}=2m$;

\item[(2)] if $\q=\pm 1$, $\rank\tau_{m}=m+1$;

\item[(3)] if $\q$ is a primitive $s$-th $(s>2)$ root of unity,
$$
\rank\tau_{m}=\left\{\begin{array}{ll}
2m-2l+1, &\quad \mbox{if }s\mbox{ is odd and }m=2ls-1, \\[-0.3em]
 & \qquad\mbox{or }s\mbox{ is even and }m=ls-1;\\
2m, &\quad \mbox{otherwise}.
\end{array}\right.
$$
\end{itemize}
Thus, we obtain this lemma.
\end{proof}

Now we can give the main results of this section.

\begin{proposition} \label{pro3.4}
Let $A_{\q}$ be the quantum zigzag algebra $A_{\q}$ of type
$\widetilde{\mathbf{A}}_{1}$. Then for $m\geq2$, we have
\begin{itemize}
\item[(1)] if $\q$ is not a root of unity, $\dim_{\kk}HH_{m}(A_{\q})=2$;

\item[(2)] if $\q=\pm 1$, $\dim_{\kk}HH_{m}(A_{\q})=2m+2$;

\item[(3)] if $\q$ is a primitive $s$-th $(s>2)$ root of unity,
$$
\dim_{\kk}HH_{m}(A_{\q})=\left\{\begin{array}{ll}
2l+1, &\quad \mbox{if }s\mbox{ is odd, and }m=2ls-2\mbox{ or }m=2ls,  \\[-0.3em]
 & \qquad\mbox{or }s\mbox{ is even, and }m=ls-2\mbox{ or }m=ls;\\
4l, &\quad \mbox{ if }s\mbox{ is odd and }m=2ls-1, \\[-0.3em]
 & \qquad\mbox{or }s\mbox{ is even and }m=ls-1;\\
2, &\quad \mbox{otherwise}.
\end{array}\right.
$$
\end{itemize}
\end{proposition}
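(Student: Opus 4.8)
The plan is to derive everything from the Euler--characteristic type identity recorded immediately before the statement. For $m\geq 2$ both $\tau_{m}$ and $\tau_{m+1}$ lie in the range handled by Lemma \ref{lem3.3}, and $\dim_{\kk}N_{m}=|\B\odot F^{m}|=4(m+1)$, so
$$
\dim_{\kk}HH_{m}(A_{\q})=\dim_{\kk}N_{m}-\rank\tau_{m}-\rank\tau_{m+1}=4(m+1)-\rank\tau_{m}-\rank\tau_{m+1}.
$$
Hence the proof reduces entirely to substituting the rank formulas of Lemma \ref{lem3.3} into this identity and simplifying; the only genuine work is keeping the case analysis straight.

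First I would dispatch case (1): if $\q$ is not a root of unity, then $\rank\tau_{m}=2m$ and $\rank\tau_{m+1}=2(m+1)$ by Lemma \ref{lem3.3}(1), so $\dim_{\kk}HH_{m}(A_{\q})=4(m+1)-2m-2(m+1)=2$. For case (2), $\q=\pm1$, I would split on the parity of $m$: when $m$ is even, $\rank\tau_{m}=m+1$ and, since $m+1$ is odd, $\rank\tau_{m+1}=m+1$; when $m$ is odd, $\rank\tau_{m}=m$ and, since $m+1$ is even, $\rank\tau_{m+1}=(m+1)+1=m+2$. In both subcases the identity gives $\dim_{\kk}HH_{m}(A_{\q})=2m+2$.

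The substantial case is (3). Write $\sigma:=2s$ if $s$ is odd and $\sigma:=s$ if $s$ is even (so $\sigma\geq 4$ here), so that by Lemma \ref{lem3.3}(3) one has $\rank\tau_{m}=2m$ except on an exceptional family of indices attached to each $l\geq 1$, where the rank is $2m-2l+1$. Since $\dim_{\kk}HH_{m}(A_{\q})$ sees both $\tau_{m}$ and $\tau_{m+1}$, it can differ from $2$ only when $m$ or $m+1$ is exceptional, and I would run through the windows $m\in\{l\sigma-2,\,l\sigma-1,\,l\sigma\}$ separately. If $m=l\sigma-2$, then $\tau_{m}$ is generic and $\tau_{m+1}$ exceptional, and the identity gives $4(l\sigma-1)-2(l\sigma-2)-(2(l\sigma-1)-2l+1)=2l+1$; if $m=l\sigma-1$, then both are exceptional and one gets $4l\sigma-(2(l\sigma-1)-2l+1)-(2l\sigma-2l+1)=4l$; if $m=l\sigma$, then $\tau_{m}$ is exceptional and $\tau_{m+1}$ generic and one gets $4(l\sigma+1)-(2l\sigma-2l+1)-2(l\sigma+1)=2l+1$; and for every other $m\geq 2$ both are generic and one gets $2$. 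Substituting back $\sigma=2s$ (resp.\ $\sigma=s$) turns $\{l\sigma-2,l\sigma-1,l\sigma\}$ into $\{2ls-2,2ls-1,2ls\}$ for $s$ odd (resp.\ $\{ls-2,ls-1,ls\}$ for $s$ even), which is exactly the asserted table.

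The step I expect to be the main obstacle is not this arithmetic but making the exceptional set precise: one must confirm that $\rank\tau_{m}$ drops for $m=l\sigma$ as well as for $m=l\sigma-1$ --- this is what produces the three consecutive windows $l\sigma-2,\,l\sigma-1,\,l\sigma$ rather than only two --- which means returning to the $2\times 4$ blocks assembled from $A_{i},A_{m-1-i}$ (for $m$ odd) and from $B_{m-i},B_{i}$ (for $m$ even) in the proof of Lemma \ref{lem3.3} and checking the divisibility conditions $s\mid m$ and $s\mid 2i$ carefully for both parities of $m$. One also has to verify that distinct windows do not overlap and that the first one does not dip below $m=2$: for $s$ odd it starts at $m=2s-2\geq 4$, for $s$ even at $m=s-2\geq 2$, and consecutive windows are separated because $\sigma\geq 4$. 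With those points settled, each of the finitely many cases is a one-line computation.
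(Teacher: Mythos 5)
Your proposal is correct and is essentially the paper's own proof: the paper simply says "by direct calculation" from the identity $\dim_{\kk}HH_{m}(A_{\q})=4(m+1)-\rank\tau_{m}-\rank\tau_{m+1}$ together with Lemma \ref{lem3.3}, and your case analysis and arithmetic (including the windows $\{l\sigma-2,\,l\sigma-1,\,l\sigma\}$ and their non-overlap for $\sigma\geq 4$) carry this out correctly. The one point you rightly flag is real: the displayed formula in Lemma \ref{lem3.3}(3) lists only $m=2ls-1$ (resp.\ $m=ls-1$) as exceptional, which by itself would give the wrong answers $2l+1$ at $m=l\sigma-1$ and $2$ at $m=l\sigma$; the missing even exceptional indices $m=2ls$ (resp.\ $m=ls$), with the same rank $2m-2l+1$, are exactly what the divisibility conditions $s\mid m$ and $s\mid 2i$ in the even-$m$ part of that lemma's proof supply, so your reconstruction of the exceptional set is the correct reading and the computation then matches the stated proposition in every case.
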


\begin{proof}
Thank to the formula
$$
\dim_{\kk}HH_{m}(A_{\q})
=\dim_{\kk}N_{m}-\dim_{\kk}\Imm\tau_{m}-\dim_{\kk}\Imm\tau_{m+1}
$$
and results given in Lemma \ref{lem3.3},
by direct calculation, we obtain the proposition.
\end{proof}

Denote by $HC_{m}(A_{\q})$ the $m$-th cyclic homology group of
$A_{\q}$.  Whenever \ch$\kk=0$, using the close relationship
between $HC_{m}(A_{\q})$ and $HH_{m}(A_{\q})$ given in \cite{Lo},
we can give the dimensions of cyclic homology groups of algebras $A_{\q}$.

\begin{corollary} \label{cor3.5}
Let $A_{\q}$ be the quantum zigzag algebras of type
$\widetilde{\mathbf{A}}_{1}$ and \ch$\kk=0$. Then we have
\begin{itemize}
\item[(1)] if $\q$ is not a root of unity, $\dim_{\kk}HC_{m}(A_{\q})=2$;

\item[(2)] if $\q=\pm 1$, $\dim_{\kk}HC_{m}(A_{\q})
=\left\{\begin{array}{ll}
m+2, &\quad \mbox{if }m\mbox{ is odd}; \\
m+3, &\quad \mbox{if }m\mbox{ is even};
\end{array}\right.$

\item[(3)] if $\q$ is a primitive $s$-th $(s>2)$ root of unity,
$$
\dim_{\kk}HC_{m}(A_{\q})=\left\{\begin{array}{llll}
4l-1, &\quad \mbox{if }s\mbox{ is odd and }m=2ls-2, \\[-0.3em]
 & \qquad\mbox{or }s\mbox{ is even and }m=ls-2;\\
3, &\quad \mbox{if }s\mbox{ is odd and }m=2ls-1, \\[-0.3em]
 & \qquad\mbox{or }s\mbox{ is even and }m=ls-1;\\
2l, &\quad \mbox{if }s\mbox{ is odd and }m=2ls, \\[-0.3em]
 & \qquad\mbox{or }s\mbox{ is even and }m=ls;\\
2, &\quad \mbox{otherwise}.
\end{array}\right.
$$
\end{itemize}
\end{corollary}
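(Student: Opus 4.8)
We use the relationship between cyclic and Hochschild homology provided, in characteristic zero, by Connes' periodicity exact sequence: since $\ch\kk=0$, there is a long exact sequence
$$
\cdots\to HC_{m-1}(A_{\q})\xrightarrow{\,B\,}HH_{m}(A_{\q})\xrightarrow{\,I\,}HC_{m}(A_{\q})\xrightarrow{\,S\,}HC_{m-2}(A_{\q})\xrightarrow{\,B\,}HH_{m-1}(A_{\q})\to\cdots
$$
(see \cite{Lo}), together with $HC_{-1}(A_{\q})=0$ and $HC_{0}(A_{\q})=HH_{0}(A_{\q})$. Every Hochschild homology dimension occurring here is already known: $\dimm_{\kk}HH_{0}(A_{\q})$ and $\dimm_{\kk}HH_{1}(A_{\q})$ were computed just above, and $\dimm_{\kk}HH_{m}(A_{\q})$ for $m\geq2$ is given by Proposition~\ref{pro3.4}. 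The plan is to run this sequence degree by degree.

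The first step is to convert the exact sequence into a numerical recursion. Set $\beta_{m}:=\dimm_{\kk}\Imm\!\big(B\colon HC_{m-1}(A_{\q})\to HH_{m}(A_{\q})\big)$. Exactness at $HH_{m}$, at $HC_{m}$ and at $HC_{m-2}$ gives $\rank I=\dimm_{\kk}HH_{m}(A_{\q})-\beta_{m}$, then $\rank S=\dimm_{\kk}HC_{m}(A_{\q})-\rank I$, and also $\rank S=\dimm_{\kk}HC_{m-2}(A_{\q})-\beta_{m-1}$; equating the two expressions for $\rank S$ yields
$$
\dimm_{\kk}HC_{m}(A_{\q})=\dimm_{\kk}HC_{m-2}(A_{\q})+\dimm_{\kk}HH_{m}(A_{\q})-\beta_{m}-\beta_{m-1}.
$$
Since $HC_{0}(A_{\q})=HH_{0}(A_{\q})$, $HC_{1}(A_{\q})\cong HH_{1}(A_{\q})/\Imm\!\big(B\colon HC_{0}(A_{\q})\to HH_{1}(A_{\q})\big)$ and $\beta_{0}=0$, this determines all the dimensions $\dimm_{\kk}HC_{m}(A_{\q})$ once the ranks $\beta_{m}$ of the connecting maps are known.

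The main obstacle, therefore, is the determination of the $\beta_{m}$, and I would carry it out directly on the small complex $\mathbb{N}=(N_{m},\tau_{m})$ of Lemma~\ref{lem3.2}. Over a field of characteristic zero Connes' operator $B$ has an explicit chain-level description on the normalized (cyclic) complex; transporting it through the comparison between the reduced bar resolution and the minimal resolution $\mathbb{P}$ of Proposition~\ref{pro3.1}, equivalently through the closed-path model $\mathbb{N}$, turns $B$ into an explicit map $N_{m}\to N_{m+1}$ on the basis $\B\odot F^{m}$. I expect that in the ordered bases used to display $\tau_{m}$ this map is again block-diagonal according to the $\q$-weight $j$, so that its rank can be read off from $2\times2$ blocks exactly as in the proof of Lemma~\ref{lem3.3}; this gives a closed formula for $\beta_{m}$ in each of the three regimes for $\q$. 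As an independent check of the eventual behaviour one may use the rigidity of periodic cyclic homology under nilpotent extensions: $HP_{\ast}(A_{\q})\cong HP_{\ast}(A_{\q}/\operatorname{rad}(A_{\q}))\cong HP_{\ast}(\kk\times\kk)$, which is $\kk^{2}$ in even degrees and $0$ in odd degrees; since $HP_{m}(A_{\q})=\varprojlim_{S}HC_{m+2i}(A_{\q})$ (the inverse system is Mittag--Leffler, all its terms being finite dimensional), this pins down the ranks of the iterated maps $S$, hence constrains the $\beta_{m}$, for $m\gg0$, leaving only finitely many low degrees to be inspected by hand.

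Finally I would feed the values of $\beta_{m}$ into the recursion and verify the three cases. When $\q$ is not a root of unity, $\dimm_{\kk}HH_{m}(A_{\q})=2$ for all $m$, and the periodicity sequence then forces $\dimm_{\kk}HC_{m}(A_{\q})=2$ for all $m$. When $\q=\pm1$ this recovers the classical zigzag computation: although $\dimm_{\kk}HH_{m}(A_{\q})$ grows linearly in $m$, the computed ranks $\beta_{m}$ are large enough that $\dimm_{\kk}HC_{m}(A_{\q})$ grows only with slope $1$, giving $m+2$ or $m+3$ according to the parity of $m$. When $\q$ is a primitive $s$-th root of unity, the jumps of $\dimm_{\kk}HH_{m}(A_{\q})$ at $m=2ls-1$ (resp.\ $ls-1$) and at $m=2ls-2,\,2ls$ (resp.\ $ls-2,\,ls$) propagate through the recursion to exactly the stated jumps of $\dimm_{\kk}HC_{m}(A_{\q})$; the remaining verification is a routine, if somewhat tedious, arithmetic check case by case.
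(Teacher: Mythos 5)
Your reduction of the problem to the ranks $\beta_{m}=\dim_{\kk}\Imm\big(B\colon HC_{m-1}(A_{\q})\to HH_{m}(A_{\q})\big)$ via the SBI sequence is correct as bookkeeping, but the proof has a genuine gap: those ranks are never determined, and they are the entire content of the corollary. The two steps you lean on do not close it. First, the chain-level computation of Connes' $B$ on the complex $\mathbb{N}$ is only announced (``I expect that \dots this map is again block-diagonal''); transporting $B$ from the normalized cyclic complex to $\mathbb{N}$ requires the comparison morphisms between $\mathbb{P}$ and the reduced bar resolution (constructed only in Section~6 of the paper), and no closed formula for $\beta_{m}$ is actually produced. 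Second, the rigidity of $HP_{\ast}$ under nilpotent extensions only identifies $\varprojlim_{S}HC_{m+2i}(A_{\q})$, i.e.\ the stable images of the iterated $S$-maps; it does not determine the individual ranks of $S$ (hence of $B$) in any fixed degree, so it cannot reduce the problem to ``finitely many low degrees.'' Even your first case is symptomatic: from $\dim_{\kk}HH_{m}(A_{\q})=2$ for all $m$ alone, the periodicity sequence does not ``force'' $\dim_{\kk}HC_{m}(A_{\q})=2$ — e.g.\ $\dim_{\kk}HC_{1}=\dim_{\kk}HH_{1}-\beta_{1}$ could a priori be $0$, $1$ or $2$.

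What makes the computation go through without ever touching the connecting maps is the input the paper takes from \cite[Theorem 4.1.13]{Lo}: in characteristic zero, for the nilpotent extension $A_{\q}\to A_{\q}/\mathrm{rad}(A_{\q})\cong\kk^{2}$ the periodicity map $S$ vanishes on the relative (reduced) cyclic homology, so the relative SBI sequence breaks into short exact sequences $0\to\overline{HC}_{m-1}\to\overline{HH}_{m}\to\overline{HC}_{m}\to0$. Equivalently, $B$ is injective on the reduced part, so $\beta_{m}$ is itself determined recursively and one gets the two-term recursion
$$
\dim_{\kk}HC_{m}(A_{\q})-\dim_{\kk}HC_{m}(\kk^{2})=-\big(\dim_{\kk}HC_{m-1}(A_{\q})-\dim_{\kk}HC_{m-1}(\kk^{2})\big)+\big(\dim_{\kk}HH_{m}(A_{\q})-\dim_{\kk}HH_{m}(\kk^{2})\big),
$$
hence an alternating-sum formula in the $\dim_{\kk}HH_{i}(A_{\q})$ of Proposition~\ref{pro3.4}. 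Note that this is exactly the sharpened (relative, degreewise) form of the Goodwillie rigidity you invoke only asymptotically; upgrading your ``independent check'' to this statement would repair the proof and make the explicit computation of $B$ unnecessary.
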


\begin{proof}
By \cite[Theorem 4.1.13]{Lo}, we have
$$\begin{aligned}
\dim_{\kk}HC_{m}(A_{\q})-\dim_{\kk}HC_{m}(\kk^2)=
-&(\dim_{\kk}HC_{m-1}(A_{\q})-\dim_{\kk}HC_{m-1}(\kk^2))\\
+&(\dim_{\kk}HH_{m}(A_{\q})-\dim_{\kk}HH_{m}(\kk^2)).
\end{aligned}
$$
Thus
$\dim_{\kk}HC_{m}(A_{\q})-\dim_{\kk}HC_{m}(\kk^{2})=
\sum\limits_{i=0}^{m}(-1)^{m-i}(\dim_{\kk}HH_{i}(A_{\q})
-\dim_{\kk}HH_{i}(\kk^{2}))$.
Moreover, it is well-known that
$$
\dim_{\kk}HH_{i}(\kk^2)=\left\{\begin{array}{ll}{2,} &\quad\mbox{if $i=0$}; \\
{0,}&\quad\mbox{if $i\geq 1$},\\
\end{array}\right.
\quad\mbox{ and }\quad
\dim_{\kk}HC_{i}(\kk^2)=\left\{\begin{array}{ll}{2,}
&\quad\mbox{if $i$ is even}; \\  {0,}&\quad\mbox{if $i$ is odd}.\\
\end{array}\right.
$$
Therefore, by Proposition \ref{pro3.4}, we obtain this corollary.
\end{proof}

For any finite-dimensional $\kk$-algebra $\Lambda$, we denote by
$$
hh.\dim\Lambda:={\rm inf}\{l\in{\mathbb{Z}}\mid \dim_{\kk}
HH_{m}(\Lambda)=0 \mbox{ for all }m>l\}
$$
and $gl.\dim\Lambda$ the Hochschild homology dimension
and global dimension of $\Lambda$, respectively. Then,
by the results of Proposition \ref{pro3.4}, we have

\begin{corollary} \label{cor3.4}
$gl.\dim A_{\q}=\infty=hh.\dim A_{\q}$
\end{corollary}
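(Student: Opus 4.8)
The plan is to establish the two equalities separately, as each is an immediate consequence of results proved earlier in this section. First I would deduce $hh.\dim A_{\q}=\infty$ directly from Proposition \ref{pro3.4}: in each of its three cases the displayed value of $\dim_{\kk}HH_{m}(A_{\q})$ is at least $2$ for every $m\geq 2$ (namely $2$, or $2m+2$, or one of $2l+1,\,4l,\,2$ with $l\geq 1$). Combining this with the values $\dim_{\kk}HH_{0}(A_{\q}),\dim_{\kk}HH_{1}(A_{\q})\in\{2,3,4\}$ computed just before Lemma \ref{lem3.3}, one gets $HH_{m}(A_{\q})\neq 0$ for all $m\geq 0$. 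Hence no integer $l$ satisfies $\dim_{\kk}HH_{m}(A_{\q})=0$ for all $m>l$, so $hh.\dim A_{\q}=\inf\varnothing=\infty$.

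For $gl.\dim A_{\q}=\infty$ I see two routes and would keep whichever reads more cleanly. The first uses that, as recalled in the introduction, $A_{\q}$ is a finite-dimensional self-injective algebra; it is not semisimple, since its Jacobson radical contains $\alpha_{1}\neq 0$, and a finite-dimensional self-injective algebra of finite global dimension must be semisimple, so $gl.\dim A_{\q}=\infty$. The second route relies only on Section \ref{homology}: Proposition \ref{pro3.1} furnishes a \emph{minimal} projective bimodule resolution ${\mathbb{P}}=(P_{m},d_{m})$ of $A_{\q}$ with every $P_{m}$ nonzero (recall $|F^{m}|=2(m+1)$), so $\mathrm{pd}_{A_{\q}^{e}}A_{\q}=\infty$; since $A_{\q}/\mathrm{rad}\,A_{\q}\cong\kk\times\kk$ is separable over $\kk$, one has $gl.\dim A_{\q}=\mathrm{pd}_{A_{\q}^{e}}A_{\q}$, and again the conclusion follows.

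I do not expect a genuine obstacle here: both halves are direct corollaries of Propositions \ref{pro3.1} and \ref{pro3.4}. The only step that is quoted rather than computed is the standard homological fact invoked for the global dimension --- either ``self-injective plus finite global dimension implies semisimple'', or ``$gl.\dim\Lambda=\mathrm{pd}_{\Lambda^{e}}\Lambda$ whenever $\Lambda/\mathrm{rad}\,\Lambda$ is separable over the ground field'' --- which I would simply cite rather than prove.
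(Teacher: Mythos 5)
Your proposal is correct and matches the paper's (essentially unstated) argument: the paper derives the corollary directly from Proposition \ref{pro3.4} with no further detail, and your first paragraph supplies exactly the intended observation that $HH_{m}(A_{\q})\neq 0$ for all $m\geq 0$. Your second paragraph fills in the standard homological facts needed for $gl.\dim A_{\q}=\infty$ (either via self-injectivity or via the non-terminating minimal bimodule resolution of Proposition \ref{pro3.1} together with separability of $A_{\q}/\mathrm{rad}\,A_{\q}$), both of which are legitimate and consistent with what the paper takes for granted.
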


Dieter Happel in \cite{Ha} asked the following question: if the
Hochschild cohomology groups $HH^{n}(\Lambda)$ of a finite-dimensional
algebra $\Lambda$ over a field $k$ vanish for all
sufficiently large $n$, is the global dimension of $\Lambda$ finite?
The paper \cite{BGMS} have given a negative answer by a class of four dimensional
algebras $\Lambda_{4}:=\kk\langle x, y\rangle/(x^{2}, xy-\q yx, y^{2})$,
where $\q$ is not a root of unity.

In \cite{Han}, Han conjectured that the homology of Happel's
question would always hold, namely that a finite-dimensional algebra
whose higher Hochschild homology groups vanish must be of finite
global dimension. It is known that Han's conjecture holds for
many types of algebra. Our results show that the algebra $A_{\q}$
also provide a positive answer to Han's conjecture.

\bigskip
\section{Hochschild cohomology groups of $A_{\q}$}\label{coh}

In this section, using the language of parallel paths,
we give a equivalent description of the cohomology
complex obtained by the minimal projective bimodule resolution
$\mathbb{P}$. Further, we give a $\kk$-basis of Hochschild cohomology
groups of $A_{\q}$ on each degree.

Let $X$ and $Y$ be two sets of uniform elements in $\kk Q$, we define
$$
X /\!\!/ Y:=\left\{(p,q)\in X\times Y
\mid{\mathfrak{o}}(p)={\mathfrak{o}}(q) \mbox{ and
}{\mathfrak{t}}(p)={\mathfrak{t}}(q)\right\},
$$
and denote by $\kk(X /\!\!/ Y)$ the vector space spanned by the
elements in $X /\!\!/ Y$, and call a pair of uniform elements
$(p,q)$ in $\kk Q$ is parallel if $(p,q)\in\kk Q /\!\!/ \kk Q$.

Consider the sets $\B /\!\!/ F^{m}$, then we have
$$
\B /\!\!/ F^{m}=\left\{\begin{array}{ll}
\left\{(\alpha_{i}, f^{m}_{(i,j)}), (\beta_{i}, f^{m}_{(i+1,j)})\mid
i=1, 2,\; 0\leq j\leq m\right\},  & \mbox{if }m\mbox{ is odd}; \\
\left\{(e_{i}, f^{m}_{(i,j)}), (\alpha_{i}\beta_{i}, f^{m}_{(i,j)})\mid
i=1, 2,\; 0\leq j\leq m\right\},
& \mbox{if }m\mbox{ is even}.
\end{array}\right.
$$
Thus $|\B /\!\!/ F^{m}|=4(m+1)$.

We now define the complex ${\mathbb{L}}=(L^{m}, \sigma^{m})$ by the set
$\B /\!\!/ F^{m}$ as following: firstly, let
$$
L^{m}=\kk(\B /\!\!/ F^{m})
$$
for all $m\geq0$; secondly, define the differential
$\sigma^{m}: L^{m-1}\rightarrow L^{m}$ by
$$
\begin{aligned}
\sigma^{m}(b, f^{m-1}_{(i,j)})=(\alpha_{i-1}b, &f^{m}_{(i-1,j+1)})
+(-1)^{m}\q^{m-j-1}(b\alpha_{i-m+1}, f^{m}_{(i,j+1)}) \\
&+\q^{j}(\beta_{i}b, f^{m}_{(i+1,j)})
+(-1)^{m}(b\beta_{i-m}, f^{m}_{(i,j)}).
\end{aligned}
$$
Applying functor $\Hom_{A_{\q}^{e}}(-,\; A_{\q})$ to
the minimal projective bimodule resolution $\mathbb{P}$, we get
a Hochschild cohomology complex of $A_{\q}$. Next lemma,
we show that the complex ${\mathbb{L}}$ give a presentation
of this Hochschild cohomology complex.

\begin{lemma} \label{lem4.1}
$\Hom_{A_{\q}^{e}}({\mathbb{P}}, A_{\q})\cong{\mathbb{L}}$ as complexes.
\end{lemma}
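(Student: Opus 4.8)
The goal is to identify the cohomology complex $\Hom_{A_{\q}^{e}}(\mathbb{P}, A_{\q})$ with the combinatorially-defined complex $\mathbb{L} = (L^m, \sigma^m)$. The strategy is exactly parallel to the proof of Lemma \ref{lem3.2}: first establish the graded isomorphism $\Hom_{A_{\q}^{e}}(P_m, A_{\q}) \cong L^m$ degree by degree, then check that under this identification the differential $\Hom(d_m, A_{\q})$ becomes $\sigma^m$.

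\emph{Step 1: the graded identification.} Since $P_m = \bigoplus_{f \in F^m} A_{\q}\ok(f) \otimes \tk(f)A_{\q}$ is a direct sum of projective bimodules of the form $A_{\q}e \otimes e'A_{\q}$, the standard adjunction gives $\Hom_{A_{\q}^{e}}(A_{\q}\ok(f)\otimes \tk(f)A_{\q}, A_{\q}) \cong \tk(f)A_{\q}\ok(f)$, a map being determined by the image of $\ok(f)\otimes\tk(f)$. Hence $\Hom_{A_{\q}^{e}}(P_m, A_{\q}) \cong \bigoplus_{f\in F^m} \tk(f)A_{\q}\ok(f)$. Now one reads off from the $\kk$-basis $\B$ of $A_{\q}$ which basis elements $b$ lie in $\tk(f)A_{\q}\ok(f)$ for each $f = f^m_{(i,j)}$: when $m$ is odd one gets the pairs $(\alpha_i, f^m_{(i,j)})$ and $(\beta_i, f^m_{(i+1,j)})$, and when $m$ is even one gets $(e_i, f^m_{(i,j)})$ and $(\alpha_i\beta_i, f^m_{(i,j)})$. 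This is precisely the set $\B /\!\!/ F^m$ described before the lemma, so $\Hom_{A_{\q}^{e}}(P_m, A_{\q}) \cong \kk(\B/\!\!/F^m) = L^m$ as $\kk$-vector spaces, with $(b, f)$ corresponding to the homomorphism sending $\ok(f)\otimes\tk(f) \mapsto b$.

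\emph{Step 2: transporting the differential.} The coboundary in $\Hom_{A_{\q}^{e}}(\mathbb{P}, A_{\q})$ is $\varphi \mapsto \varphi \circ d_m$. Given $(b, f^{m-1}_{(i,j)}) \in L^{m-1}$, viewed as the homomorphism $\psi: P_{m-1} \to A_{\q}$ supported on the summand indexed by $f^{m-1}_{(i,j)}$ with $\psi(\ok\otimes\tk) = b$, I compute $\psi \circ d_m$ on each generator $\ok(f^m_{(i',j')})\otimes\tk(f^m_{(i',j')})$ using the explicit formula for $d_m$ given just before Proposition \ref{pro3.1}. Only the terms of $d_m$ landing in the $f^{m-1}_{(i,j)}$-summand survive, and tracking which $f^m_{(i',j')}$ produce such terms — together with the scalars $\q^{m-j}$, $\q^j$ and signs $(-1)^m$ appearing in $d_m$ — yields exactly the four terms
$$
(\alpha_{i-1}b, f^m_{(i-1,j+1)}) + (-1)^m\q^{m-j-1}(b\alpha_{i-m+1}, f^m_{(i,j+1)}) + \q^j(\beta_i b, f^m_{(i+1,j)}) + (-1)^m(b\beta_{i-m}, f^m_{(i,j)}),
$$
which is the definition of $\sigma^m(b, f^{m-1}_{(i,j)})$. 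One must be careful that $b\alpha$, $\alpha b$, etc.\ are reduced in $A_{\q}$ using the basis $\B$ (so e.g. $\alpha_1\alpha_2 = 0$, $\alpha_1\beta_1 = -\q\beta_2\alpha_2$), but these reductions are consistent with the index conventions $\alpha_i = \alpha_{i'}$ for $i \equiv i' \bmod 2$ used throughout.

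\emph{Main obstacle.} The only real difficulty is bookkeeping: matching the index shifts $(i,j) \leadsto (i\pm1, j\pm1)$ and the $\q$-powers on the two sides, and verifying the signs, since $d_m$ is written as a map \emph{from} $P_m$ while $\sigma^m$ is written as a map \emph{into} $L^m$, so the roles of $f^m$ and $f^{m-1}$ are interchanged relative to Lemma \ref{lem3.2}. Once the index dictionary is fixed — reading off from $d_m$ which coefficient of $\ok(f^m)\otimes\tk(f^m)$ along the $f^{m-1}_{(i,j)}$ coordinate is nonzero — the computation is mechanical and the claimed formula for $\sigma^m$ drops out, giving the desired isomorphism of complexes. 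As in Lemma \ref{lem3.2}, I would record this as a commutative ladder diagram between $\Hom_{A_{\q}^{e}}(\mathbb{P}, A_{\q})$ and $\mathbb{L}$ to conclude.
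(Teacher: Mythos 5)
Your proposal is correct and follows essentially the same route as the paper: the adjunction $\Hom_{A_{\q}^{e}}(A_{\q}\ok(f)\otimes\tk(f)A_{\q},\,A_{\q})\cong \ok(f)A_{\q}\tk(f)$ identifies $\Hom_{A_{\q}^{e}}(P_m,A_{\q})$ with $\kk(\B/\!\!/F^m)=L^m$ via $(a,f)\mapsto\xi_{(a,f)}$, and the differential $d_m^{\ast}$ transports to $\sigma^m$ through the resulting commutative ladder. The only nit is that with the paper's left-to-right composition convention the Hom-space is $\ok(f)A_{\q}\tk(f)$ rather than $\tk(f)A_{\q}\ok(f)$ as you wrote, but since you then correctly read off the parallel pairs $\B/\!\!/F^m$ this does not affect the argument.
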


\begin{proof} It is easy to see that
$$
\Hom_{A_{\q}^{e}}\left(P_{m},A_{\q}\right)\cong
\bigoplus_{f\in F^{m}}\Hom_{A_{\q}^{e}}\left(A_{\q}\ok(f)
\otimes\tk(f)A_{\q}, A_{\q}\right)\cong\bigoplus_{f\in F^{m}}
\ok(f)A_{\q}\tk(f)\cong L^{m}
$$
as $\kk$-vector spaces.
The corresponding isomorphism
$\varphi_{m}: L^{m}\rightarrow\Hom_{A_{\q}^{e}}\left(P_{m},
A_{\q}\right)$ is given by $(a, f)\mapsto\xi_{(a, f)}$,
where $\xi_{(a, f)}\left(\ok(g)\otimes\tk(g)\right)$ is $a$ if $f=g$
and is 0 otherwise.
Then we have the following commutative diagram:
$$
\xymatrix{
\cdots \ar[r]&\kk\left(\B/\!\!/ F^{m}\right)\ar[r]^{\sigma^{m+1}}
\ar[d]^{\varphi^{m}}
&\kk\left(\B/\!\!/ F^{m+1}\right)\ar[r]\ar[d]^{\varphi^{m+1}}&\cdots\\
\cdots \ar[r]&\Hom_{A_{\q}^{e}}\left(P_{m},
A_{\q}\right)\ar[r]^{d_{m+1}^{\ast}}
&\Hom_{A_{\q}^{e}}\left(P_{m+1}, A_{\q}\right)\ar[r]&\cdots .}
$$
Therefore, the isomorphism of complexes is obtained.
\end{proof}

We now  give a basis of Hochschild cohomology
groups of $A_{\q}$ by the complex $\mathbb{L}$. By the
definition, $HH^{m}(A_{\q})=\Ker\sigma^{m+1}/\Imm\sigma^{m}$,
we need to determine $\Imm\sigma^{m}$ and $\Ker\sigma^{m}$ for all $m\geq1$.
Now, we will determine $\Imm\sigma^{m}$ by considering the corresponding matrix
of $\sigma^{m}$ over an ordered basis of $L^{m-1}$.
Let
$$
(b,f^{m}_{(i,j)})\prec (b',f^{m}_{(i',j')})\quad \mbox{ if }
j<j', \mbox{ or }j=j'\mbox{ but }b\prec b',
$$
for any
$(b,f^{m}_{(i,j)}), (b',f^{m}_{(i',j')})\in\B/\!\!/ F^{m}$.
We still denote by $\sigma^{m}$ the matrix of $\sigma^{m}$ under the
ordered basis $\B/\!\!/ F^{m}$.
Then we have

$$
\sigma^{m}=\left\{\begin{array}{ll}
{\rm diag}\left\{\widehat{C}_{0},C_{1},C_{2},\cdots,C_{m-2},
\bar{C}_{m-1}\right\},  & \mbox{ if }m\mbox{ is odd}; \\
{\rm diag}\left\{\widehat{D}_{-1},D_{0},D_{1},\cdots,D_{m-2},
\bar{D}_{m-1}\right\},
& \mbox{ if }m\mbox{ is even},
\end{array}\right.
$$
where $4\times4$ matrices
$C_{i}={\small\left(\begin{array}{cc}   A_{i}&-A_{m-i-1}\\ 0&0
\end{array}\right)}$,
$D_{i}={\small\left(\begin{array}{cc}   0&B_{m-i-2}\\ 0&B_{i}
\end{array}\right)}$,
$2\times4$ matrices
$\widehat{D}_{-1}=\bar{D}_{m-1}=
{\small\left(\begin{array}{cc}    0&B_{-1}\end{array}\right)}$,
$4\times6$ matrices
$\widehat{C}_{0}={\small\left(\begin{array}{cc}    0&C_{0}
\end{array}\right)}$,
$\bar{C}_{m-1}={\small\left(\begin{array}{cc}   C_{m-1}&0
\end{array}\right)}$,
and $2\times2$ matrices
$A_{i}={\small\left(
\begin{array}{cc}
\q^{i}&-1\\   -1&\q^{i}
\end{array}\right)}$,
$B_{i}={\small\left(
\begin{array}{cccccccc}
1&-\q^{i}\\
-\q^{i}&1
\end{array}\right)}$.

Thus, direct computations show that
$$
\begin{aligned}
&HH^{0}(A_{\q})\cong\kk\left\{\begin{split}
&(\alpha_{1}\beta_{1}, f^{0}_{(1,0)})+\q^{-1}(\alpha_{2}\beta_{2}, f^{0}_{(2,0)}),\\
&(\alpha_{1}\beta_{1}, f^{0}_{(1,0)})-\q^{-1}(\alpha_{2}\beta_{2}, f^{0}_{(2,0)}),\\
&(e_{1}, f^{0}_{(1,0)})+(e_{2}, f^{0}_{(2,0)})
\end{split}\right\};\\
&HH^{1}(A_{\q})\cong\left\{\begin{array}{lll}
\kk\left\{\begin{split}
&(\alpha_{1}, f^{1}_{(1,0)})+\q^{-1}(\alpha_{2}, f^{1}_{(2,0)}),\\
&(\beta_{1}, f^{1}_{(2,0)})+(\beta_{2}, f^{1}_{(1,0)}),\\
&(\alpha_{1}, f^{1}_{(1,1)})+(\alpha_{2}, f^{1}_{(2,1)}),\\
&(\beta_{1}, f^{1}_{(2,1)})+\q^{-1}(\beta_{2}, f^{1}_{(1,1)})
\end{split}\right\},  & \mbox{if }\q=\pm1; \\
\kk\left\{\begin{split}
&(\alpha_{1}, f^{1}_{(1,1)})+(\alpha_{2}, f^{1}_{(2,1)}),\\
&(\beta_{1}, f^{1}_{(2,0)})+(\beta_{2}, f^{1}_{(1,0)})
\end{split}\right\},
& \mbox{otherwise}.
\end{array}\right.
\end{aligned}
$$
Here, we use the same notation for the corresponding cohomology classes.
For the higher degree Hochschild cohomology groups of $A_{\q}$,
We will discuss the value of $\q$ case-by-case.
Note that $1\leq\rank C_{i}\leq 2$, $1\leq\rank D_{i}\leq 2$,
$\rank C_{i}=1$ if and only if $\q=\pm1$, or $\q$ is a primitive
$s$-th $(s>2)$ root of unity and satisfying $s|m-1$ and $s|2i$; and $\rank D_{i}=1$
if and only if one of $\q=\pm1$, or $\q$ is a primitive $s$-th $(s>2)$
root of unity and satisfying $s|m-2$ and $s|2i$. Hence, we have the following
propositions.

\begin{proposition} \label{prop4.2}
If $\q$ is not a root of unity and $m\geq2$. Then
$$
HH^{m}(A_{\q})\cong\left\{\begin{array}{ll}
\kk\left\{(\alpha_{1}\beta_{1}, f^{2}_{(1,1)})
+(\alpha_{2}\beta_{2}, f^{2}_{(2,1)})\right\},
& \mbox{if }m=2; \\
0,   & \mbox{if }m>2.
\end{array}\right.
$$
\end{proposition}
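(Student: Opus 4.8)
The plan is to read off $HH^m(A_\q)$ directly from the complex $\mathbb L=(L^m,\sigma^m)$ of Lemma~\ref{lem4.1}. Since $\dim_\kk L^m=|\B/\!\!/F^m|=4(m+1)$, one has
$$
\dim_\kk HH^m(A_\q)=\dim_\kk\Ker\sigma^{m+1}-\dim_\kk\Imm\sigma^m=4(m+1)-\rank\sigma^m-\rank\sigma^{m+1},
$$
so everything reduces to computing $\rank\sigma^m$ for $m\ge2$. The decisive point is that, as $\q$ is not a root of unity, $\q^{2i}\neq1$ for all $i\ge1$, whence $\det A_i=\q^{2i}-1$, $\det B_i=1-\q^{2i}$ (for $i\ge1$) and $\det B_{-1}=1-\q^{-2}$ are all nonzero; the only singular blocks among the $A_i,B_i$ are $A_0$ and $B_0$, each of rank~$1$.

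Feeding this into the block forms of $\sigma^m$ displayed above, I would argue as follows. If $m\ge3$ is odd, each of the $m$ diagonal blocks $\widehat C_0,C_1,\dots,C_{m-2},\bar C_{m-1}$ has rank~$2$: for $1\le i\le m-2$ this is clear since $A_i$ is invertible, while for the two boundary blocks the two nonzero rows of $(A_0\mid -A_{m-1})$, resp. of $(A_{m-1}\mid -A_0)$, are linearly independent because $\q^{m-1}\neq1$; hence $\rank\sigma^m=2m$. If $m\ge4$ is even, the same reasoning applies to the $m+1$ blocks $\widehat D_{-1},D_0,\dots,D_{m-2},\bar D_{m-1}$ (using that $B_{m-2}$ is invertible since $m-2\ge1$, which in particular handles $D_0$ and $D_{m-2}$), giving $\rank\sigma^m=2m+2$. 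The only exceptional degree is $m=2$: there $\sigma^2={\rm diag}\{\widehat D_{-1},D_0,\bar D_1\}$ where the middle block $D_0$ has both its $2\times2$ entries equal to the singular matrix $B_0$ (since $B_{m-2}=B_0$), so $\rank D_0=1$ and $\rank\sigma^2=2+1+2=5$.

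Plugging into the dimension formula then finishes the proof. For $m=2$, together with $\rank\sigma^3=6$ (from the odd case with $m=3$), we get $\dim_\kk HH^2(A_\q)=12-5-6=1$; for $m>2$ one checks, separately in the odd and even cases, that $\rank\sigma^m+\rank\sigma^{m+1}=4(m+1)$, so $HH^m(A_\q)=0$. It remains to name the generator in degree~$2$: from the explicit formula for $\sigma^3$ one verifies that $(\alpha_1\beta_1,f^2_{(1,1)})+(\alpha_2\beta_2,f^2_{(2,1)})$ is a cocycle, and comparing with a basis of the $5$-dimensional image $\Imm\sigma^2$ shows it is not a coboundary; since $\dim_\kk HH^2(A_\q)=1$ it therefore represents a basis.

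The routine part is the block-rank bookkeeping; the one genuine subtlety — and precisely the reason $HH^2$ does not vanish — is the degree-$2$ anomaly, where the block $D_0$ has both its $2\times2$ entries equal to the singular matrix $B_0$ and so loses a rank, producing the one extra dimension in $HH^2(A_\q)$.
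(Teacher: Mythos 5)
Your proof is correct and follows essentially the same route as the paper: both reduce everything to the block-diagonal form of $\sigma^{m}$, show every block has rank $2$ when $\q$ is not a root of unity except the single block $D_{0}$ at $m=2$ (where $B_{m-2}=B_{0}$ is singular), and then apply the dimension formula $\dim_{\kk}HH^{m}(A_{\q})=4(m+1)-\rank\sigma^{m}-\rank\sigma^{m+1}$. Your justification of the boundary-block ranks and of the degree-$2$ anomaly is in fact slightly more explicit than the paper's.
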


\begin{proof}
Let us discuss the parity of m as follows. If $m>2$ is odd,
then $\rank\sigma^{m}=\rank C_{0}+\rank C_{1}+\cdots+\rank C_{m-1}$.
Note that $\rank C_{i}=2$ since $\q$ is not a root of unity, we get
$\rank\sigma^{m}=2m$. If $m>2$ is even, then $\rank\sigma^{m}=
\rank\widehat{D}_{-1}+\rank D_{0}+\cdots+\rank D_{m-2}
+\rank\bar{D}_{m-1}$.
Note that $\rank\widehat{D}_{-1}=\rank D_{i}=\rank\bar{D}_{m-1}=2$
since $\q$ is not a root of unity, we get
$\rank\sigma^{m}=2m+2$. By the definition,
$HH^{m}(A_{\q})=\Ker\sigma^{m+1}/\Imm\sigma^{m}$, and so that
$$
\begin{aligned}
\dim_{\kk}HH^{m}(A_{\q})&=\dim_{\kk}\Ker\sigma^{m+1}
-\dim_{\kk}\Imm\sigma^{m}\\
&=\dim_{\kk}L^{m}-\rank\sigma^{m+1}-\rank\sigma^{m}.
\end{aligned}
$$
Because $\dim_{\kk}L^{m}=4(m+1)$, we get $\dim_{\kk}HH^{m}(A_{\q})=0$
for $m>2$. Finally, when $m=2$, it is easy to see that
$\rank\sigma^{2}=5$, and $\dim_{\kk}HH^{2}(A_{\q})=1$.
By the definition of the differential $\sigma^{2}$, it is not hard
to get that $(\alpha_{1}\beta_{1}, f^{2}_{(1,1)})
+(\alpha_{2}\beta_{2}, f^{2}_{(2,1)})$ is basis of $HH^{2}(A_{\q})$.
\end{proof}

Notably, if $\q$ is not a root of unity, the algebra $A_{\q}$ also
give a negative answer for Happel's question.
By \cite{ZH}, $A_{\q}$ is just the $\mathbb{Z}_{2}$-Galois covering
of the four dimension algebra $\Lambda_{4}$ given in \cite{BGMS}, if
\ch$\kk\nmid 2$. Next, we consider the basis elements of
$HH^{m}(A_{\q})$ whenever $\q=\pm1$.

\begin{proposition} \label{prop4.3}
If $\q=\pm1$ and $m\geq2$. Then
$$
HH^{m}(A_{\q})\cong\left\{\begin{array}{lllllll}
\kk\left\{\begin{aligned}
&(\beta_{1}, f^{m}_{(2,j)})+\q^{j}(\beta_{2}, f^{m}_{(1,j)}),\\
&(\alpha_{1}, f^{m}_{(1,j)})+\q^{j-1}(\alpha_{2}, f^{m}_{(2,j)})
\end{aligned}\right\}_{0\leq j\leq m},
& \mbox{if }m\mbox{ is odd}; \\
\kk\left\{\begin{split}
&(\alpha_{1}\beta_{1}, f^{m}_{(1,j)})+\q^{j-1}(\alpha_{2}\beta_{2}, f^{m}_{(2,j)}), \\
&(e_{1}, f^{m}_{(1,j)})+\q^{j}(e_{2}, f^{m}_{(2,j)})
\end{split}\right\}_{0\leq j\leq m},
& \mbox{if }m\mbox{ is even}.
\end{array}\right.
$$
\end{proposition}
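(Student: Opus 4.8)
The plan is to compute $\rank\sigma^{m}$ and $\rank\sigma^{m+1}$ when $\q=\pm1$, read off $\dim_{\kk}HH^{m}(A_{\q})$ from the formula $\dim_{\kk}HH^{m}(A_{\q})=\dim_{\kk}L^{m}-\rank\sigma^{m+1}-\rank\sigma^{m}$, and then exhibit the explicit elements claimed and check that they form a basis. Since $\q=\pm1$, we have $\q^{i}=\q^{i'}$ whenever $i\equiv i'\pmod 2$, and in fact $A_{i}$ and $B_{i}$ each take only two values, with $\rank A_{i}=\rank B_{i}=1$ for all $i$ (because $\q^{i}\cdot\q^{i}=1=\q^{-1}\cdot\q^{-1}$ in the $A_i$ case, and $1\cdot 1=\q^{i}\cdot\q^{i}$ in the $B_i$ case). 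First I would plug this into the block-diagonal description of $\sigma^{m}$: for $m$ odd, $\rank\sigma^{m}=\rank\widehat{C}_{0}+\rank C_{1}+\cdots+\rank C_{m-2}+\rank\bar{C}_{m-1}=m$ (each of the $m$ blocks contributes $1$, since $C_i = \bigl(\begin{smallmatrix}A_i & -A_{m-i-1}\\ 0&0\end{smallmatrix}\bigr)$ has rank equal to $\rank(A_i\ {-}A_{m-i-1})=1$ when $\q=\pm 1$); for $m$ even, $\rank\sigma^{m}=\rank\widehat{D}_{-1}+\rank D_{0}+\cdots+\rank D_{m-2}+\rank\bar{D}_{m-1}=m+1$ ($m-1$ interior blocks of rank $1$ plus the two boundary blocks $\widehat{D}_{-1}=\bar{D}_{m-1}=(0\ B_{-1})$ of rank $1$ each). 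Combining: if $m$ is odd then $\dim_{\kk}HH^{m}=4(m+1)-(m+2)-m=2m+2$; if $m$ is even then $\dim_{\kk}HH^{m}=4(m+1)-(m+1)-(m+1)=2m+2$. So in either parity the dimension is $2m+2$, consistent with Proposition~\ref{pro3.4}(2) and with the claimed list, which has $2(m+1)$ elements.

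Next I would verify the explicit elements. For each parity, the proposed generators come in two families indexed by $0\le j\le m$, giving $2(m+1)$ elements total, matching the dimension just computed; so it suffices to show (a) each proposed element lies in $\Ker\sigma^{m+1}$, and (b) the proposed elements are linearly independent modulo $\Imm\sigma^{m}$. For (a), I would substitute the element into the formula for $\sigma^{m+1}$ from the definition of $\mathbb{L}$, using $\q^2=1$ and the relations in $A_{\q}$; the point is that each generator is built to pair the two summands so that the $\alpha$- and $\beta$-contributions in $\sigma^{m+1}$ cancel — e.g. for $m$ odd the element $(\beta_1,f^m_{(2,j)})+\q^j(\beta_2,f^m_{(1,j)})$ maps under $\sigma^{m+1}$ into a combination of terms $(\alpha\beta, f^{m+1}_{(\cdot,\cdot)})$ and $(e,f^{m+1}_{(\cdot,\cdot)})$ whose coefficients are $\pm(1-\q^{2}(\cdots))$ and vanish. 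For (b), since $\dim_{\kk}\Ker\sigma^{m+1}=\dim_{\kk}L^{m}-\rank\sigma^{m+1}=4(m+1)-\rank\sigma^{m+1}$ equals $2m+2+\rank\sigma^{m}=2m+2+\dim_{\kk}\Imm\sigma^{m}$, and we have $2m+2$ candidates in $\Ker\sigma^{m+1}$, it is enough to check that $\Imm\sigma^{m}$ together with the $2m+2$ candidates spans $\Ker\sigma^{m+1}$, or equivalently that no nontrivial combination of candidates lies in $\Imm\sigma^{m}$; the cleanest route is to identify $\Imm\sigma^{m}$ explicitly from its block structure (it is spanned by one vector per block, namely the image of the block's rank-one map) and observe, by comparing the $j$-gradings and the $\B$-components, that these image vectors are disjoint in support from — or reduce to zero against — the candidate list.

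The main obstacle I anticipate is bookkeeping rather than conceptual: the indices $(i,j)$ of $f^{m}_{(i,j)}$ wrap mod $2$ in $i$ and the differential $\sigma^{m+1}$ has four terms with shifts $j\mapsto j,j+1$ and $i\mapsto i-1,i,i+1,i-m+1,i-m$, so one must carefully track which $\B/\!\!/F^{m+1}$ basis vector each term lands in, especially at the boundary values $j=0$ and $j=m$ where some $f^{m}_{(i,j)}$ on the target side are zero (recall $f^{m-1}_{(i,j)}=0$ for $j<0$ or $j>m-1$). A secondary subtlety is that the sign $(-1)^{m}$ and the parity-dependent shape of $\B/\!\!/F^{m}$ differ between the odd and even cases, so the two halves of the proof, while parallel, need to be carried out separately. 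Once the matrix $\sigma^{m}$ is used in the block-diagonal form already recorded in the text, both the dimension count and the kernel membership reduce to the $2\times2$ identities $\rank A_i=\rank B_i=1$ and the cancellation $1-\q^{2k}=0$, so I would present the argument by first quoting the rank computation, then stating the two families, then verifying $\sigma^{m+1}$-closedness and independence, concluding that the listed classes form a basis.
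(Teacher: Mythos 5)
Your proposal is correct and follows essentially the same route as the paper: the paper likewise uses the block-diagonal form of $\sigma^{m}$, the fact that each block $C_{i}$ (resp. $\widehat{D}_{-1}, D_{i}, \bar{D}_{m-1}$) has rank $1$ when $\q=\pm1$ to get $\rank\sigma^{m}=m$ for $m$ odd and $m+1$ for $m$ even, and then reads off explicit spanning sets for $\Imm\sigma^{m}$ and $\Ker\sigma^{m+1}$ to exhibit the quotient basis. Your reorganization (dimension count first, then cocycle membership and independence modulo the image) is only a cosmetic variant of the same computation.
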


\begin{proof}
If $m\geq2$ is odd, then $\rank\sigma^{m}=m$, since $\rank C_{i}=1$
for all $0\leq i\leq m-1$. Consider the action of the
differential $\sigma^{m}$ on $L^{m-1}$, we have
$$
\begin{aligned}
\Imm\sigma^{m}&=\kk\left\{\q^{j}(\beta_{1}, f^{m}_{(2,j)})-
(\beta_{2}, f^{m}_{(1,j)})-\q^{j}(\alpha_{1}, f^{m}_{(1,j+1)})
+(\alpha_{2}, f^{m}_{(2,j+1)})\right\}_{0\leq j\leq m-1};\\
\Ker\sigma^{m}&=\kk\left\{(\alpha_{1}\beta_{1}, f^{m-1}_{(1,j)}),\ \
(e_{1}, f^{m-1}_{(1,j)})+\q^{j}(e_{2}, f^{m-1}_{(2,j)}),\ \
(\alpha_{2}\beta_{2}, f^{m-1}_{(2,j)})\right\}_{0\leq j\leq m-1}.
\end{aligned}
$$
If $m\geq2$ is even, then $\rank\sigma^{m}=m+1$,
Since $\rank\widehat{D}_{-1}=\rank D_{i}=\rank\bar{D}_{m-1}=1$
for all $0\leq i\leq m-2$. By the definition of the differential $\sigma^{m}$, we have
$$
\begin{aligned}
\Imm\sigma^{m}&=\kk
\left\{(\alpha_{1}\beta_{1}, f^{m}_{(1,j)})
-\q^{j-1}(\alpha_{2}\beta_{2}, f^{m}_{(2,j)})\right\}_{0\leq j\leq m};\\
\Ker\sigma^{m}&=\kk\left\{\begin{split}
&\left\{\begin{aligned}
&(\beta_{1}, f^{m-1}_{(2,j)})+\q^{j}(\beta_{2}, f^{m-1}_{(1,j)}),\\
&(\alpha_{1}, f^{m-1}_{(1,j)})
+\q^{j-1}(\alpha_{2}, f^{m-1}_{(2,j)})\\
\end{aligned}\right\}_{0\leq j\leq m-1},\\
&\left\{(\alpha_{1}, f^{m-1}_{(1,j+1)})-(\beta_{1}, f^{m-1}_{(2,j)})
\right\}_{0\leq j\leq m-2}
\end{split}\right\}.
\end{aligned}
$$
Finally, Using the definition of $HH^{m}(A_{\q})$, that is,
$HH^{m}(A_{\q})\cong\Ker\sigma^{m+1}/\Imm\sigma^{m}$, we can get
a basis of $HH^{m}(A_{\q})$.
\end{proof}

Similar to the discussion of Proposition \ref{prop4.3},
we can get a $\kk$-basis of $HH^{m}(A_{\q})$ when
$\q$ is a primitive $s$-th $(s>2)$ root of unity as following.

\begin{proposition} \label{prop4.4}
If $\q$ is a primitive $s$-th $(s>2)$ root of unity and $m\geq2$.

(1) Whenever $s$ is odd,
$$
HH^{m}(A_{\q})=\left\{\begin{array}{lllll}
\kk\left\{(e_{1}, f^{m}_{(1,j)})+(e_{2}, f^{m}_{(2,j)})
\right\}_{j\in\{ts\}_{t=0}^{2l}},
& \mbox{if }m=2ls; \\
\kk\left\{\begin{split}
&\{(\beta_{1}, f^{m}_{(2,j)})+(\beta_{2}, f^{m}_{(1,j)})\}_{j\in\{ts\}_{t=0}^{2l}},\\
&\{(\alpha_{1}, f^{m}_{(1,j)})+(\alpha_{2}, f^{m}_{(2,j)})\}_{j\in\{ts+1\}_{t=0}^{2l}}
\end{split}\right\},
& \mbox{if }m=2ls+1;\\
\kk\left\{(\alpha_{1}\beta_{1}, f^{m}_{(1,j)})
+(\alpha_{2}\beta_{2}, f^{m}_{(2,j)})
\right\}_{j\in\{ts+1\}_{t=0}^{2l}},
& \mbox{if }m=2ls+2;\\
0, & \mbox{otherwise}.
\end{array}\right.
$$

(2) Whenever $s$ is even,
$$
HH^{m}(A_{\q})=\left\{\begin{array}{lllll}
\kk\left\{(e_{1}, f^{m}_{(1,j)})+\q^{j}(e_{2}, f^{m}_{(2,j)})
\right\}_{j\in\{\frac{ts}{2}\}_{t=0}^{2l}},
& \mbox{if }m=ls; \\
\kk\left\{\begin{split}
&\{(\beta_{1}, f^{m}_{(2,j)})+(\beta_{2},
f^{m}_{(1,j)})\}_{j\in\{\frac{ts}{2}\}_{t=0}^{2l}},\\
&\{(\alpha_{1}, f^{m}_{(1,j)})+(\alpha_{2},
f^{m}_{(2,j)})\}_{j\in\{\frac{ts}{2}+1\}_{t=0}^{2l}}
\end{split}\right\}, & \mbox{if }m=ls+1;\\
\kk\left\{(\alpha_{1}\beta_{1}, f^{m}_{(1,j)})
+(\alpha_{2}\beta_{2}, f^{m}_{(2,j)})
\right\}_{j\in\{\frac{ts}{2}+1\}_{t=0}^{2l}},
& \mbox{if }m=ls+2;\\
0, & \mbox{otherwise}.
\end{array}\right.
$$
\end{proposition}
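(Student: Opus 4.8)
The plan is to imitate the proof of Proposition \ref{prop4.3} verbatim, since the only difference when $\q$ is a primitive $s$-th root of unity with $s>2$ is the precise list of indices $i$ for which the $2\times 2$ blocks $C_i$ or $D_i$ drop rank. First I would fix the parity of $s$ and then the residue of $m$ modulo the relevant period: when $s$ is odd the period governing the odd-degree complex (built from the $C_i$) is $2s$ because $\rank C_i = 1$ needs both $s\mid m-1$ and $s\mid 2i$, and since $\q^i = \q^{-i}$ forces $\q^{2i}=1$, i.e. $s\mid 2i$, which for odd $s$ means $s\mid i$; combined with $s\mid m-1$ this pins down when $m = 2ls+1$. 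Likewise the even-degree complex (built from the $D_i$ and the boundary blocks $\widehat D_{-1}, \bar D_{m-1}$) drops extra rank exactly when $s\mid m-2$ and $s\mid i$, i.e. $m = 2ls+2$, plus the degenerate cases $m=2ls$ coming from the boundary blocks. For even $s$ one replaces $2ls$ by $ls$ throughout (here $s\mid 2i$ is equivalent to $\tfrac{s}{2}\mid i$, which halves the period). I would tabulate, for each congruence class of $m$, the value $\rank\sigma^m$ via the block-diagonal description already displayed before Proposition \ref{prop4.2}, using the elementary fact that $\rank C_i$ and $\rank D_i$ are $1$ precisely under the stated divisibility conditions.

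The key steps, in order, are: (i) compute $\rank\sigma^m$ and $\rank\sigma^{m+1}$ for each case using Lemma \ref{lem3.3}-style block analysis — actually these are already essentially the transpose data of $\tau_m$, so I can quote the rank computations rather than redo them; (ii) from $\dim_\kk HH^m(A_\q) = \dim_\kk L^m - \rank\sigma^{m+1} - \rank\sigma^m = 4(m+1) - \rank\sigma^{m+1} - \rank\sigma^m$, read off that $HH^m(A_\q)=0$ except in the three listed residue classes, where the dimension is $2l+1$, $4l$ (split as $(2l+1)+(2l-1)$ or similar), and $2l+1$ respectively; (iii) for each surviving case, compute $\Imm\sigma^m$ and $\Ker\sigma^{m+1}$ explicitly from the differential formula for $\sigma^m$, exactly as in the proof of Proposition \ref{prop4.3}, and exhibit the listed elements as representatives of a basis of the quotient. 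For step (iii) the crucial observation is that the surviving basis elements are supported on precisely those $j$ with $s\mid j$ (for the $e$- and $\beta$-classes) or $s\mid j-1$ (for the $\alpha\beta$- and $\alpha$-classes) when $s$ is odd, and $\tfrac{s}{2}\mid j$ resp. $\tfrac{s}{2}\mid j-1$ when $s$ is even — these are exactly the indices at which the relevant block degenerates, so the cocycle condition is satisfied and the element is not a coboundary.

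The main obstacle I anticipate is bookkeeping: one must be careful that the boundary blocks $\widehat C_0$, $\bar C_{m-1}$ (odd $m$) and $\widehat D_{-1}$, $\bar D_{m-1}$ (even $m$) contribute correctly, since these are not square and behave differently from the interior $C_i$, $D_i$; in particular the cases $m=2ls$ (resp. $ls$) and $m=2ls+2$ (resp. $ls+2$) both involve a boundary block plus an interior degeneration, which is why the dimension is $2l+1$ rather than $2l$ or $2l+2$. I would handle this by writing $\rank\sigma^m$ as a sum over the blocks with the first and last terms treated separately, mirroring the two-case split (odd/even $m$) in the proof of Lemma \ref{lem3.3}. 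A secondary subtlety is verifying the $m=2ls+1$ (resp. $ls+1$) case, where two families of generators ($\alpha$-type and $\beta$-type) coexist with different index-shift conventions; I would check the index ranges $\{ts\}_{t=0}^{2l}$ versus $\{ts+1\}_{t=0}^{2l}$ against the explicit form of $\sigma^m$ and $\sigma^{m+1}$ to confirm both that these elements are cocycles and that the count matches $\rank\sigma^{m+2}$ and $\rank\sigma^{m+1}$. Beyond that the argument is a routine, if lengthy, case check, and I would present only the representative computation for, say, $s$ odd and $m=2ls$, indicating that the remaining cases are entirely analogous.
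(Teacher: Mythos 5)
Your proposal is correct and follows the paper's own proof essentially verbatim: the paper likewise reduces to the block-diagonal form of $\sigma^{m}$, records for each residue class of $m$ which blocks $C_{i}$, $D_{i}$ drop to rank one (via $s\mid m-1$, $s\mid 2i$, resp.\ $s\mid m-2$, $s\mid 2i$), computes $\rank\sigma^{m}$ case by case, and then writes out $\Imm\sigma^{m}$ and $\Ker\sigma^{m+1}$ explicitly, quoting the even-$s$ case as analogous. The one slip is your dimension count in the case $m=2ls+1$: it is $4l+2=(2l+1)+(2l+1)$, not $4l$ --- indeed $\rank\sigma^{m}=2m-2l-1$ and $\rank\sigma^{m+1}=2m-2l+3$ give $4(m+1)-\rank\sigma^{m+1}-\rank\sigma^{m}=4l+2$, matching the two families of $2l+1$ generators each in the statement (you appear to have carried over the homology dimension $4l$ from Proposition \ref{pro3.4}); the explicit basis check you plan in step (iii) would catch this.
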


\begin{proof}
(1) In this case, $s$ is odd. If $m=2ls$, we get $\rank\sigma^{m}=2m+2$,
and $\rank\widehat{D}_{-1}=\rank D_{i}=\rank\overline{D}_{m-1}=2$
for all $0\leq i\leq m-2$. By the definition of the differential $\sigma^{m}$, we have
$$
\begin{aligned}
\Imm\sigma^{m}&=\kk\left\{(\alpha_{1}\beta_{1}, f^{m}_{(1,j)}),\, (\alpha_{2}\beta_{2},
f^{m}_{(2,j)})\right\}_{0\leq j\leq m};\\
\Ker\sigma^{m}&=\kk\left\{\begin{split}
&\q^{j}(\beta_{1},f^{m}_{(2,j)})+(\alpha_{2},f^{m}_{(2,j+1)}),\\
&(\beta_{2},f^{m}_{(1,j)})+q^{m-1-j}(\alpha_{1},f^{m}_{(1,j+1)})\}\\
\end{split}\right\}_{ 0\leq j\leq m-2}.
\end{aligned}
$$
If $m=2ls+1$, then $\rank\sigma^{m}=2m-2l-1$, and $\rank C_{i}=2$, $\rank C_{ts}=1$
for all $0\leq i\leq m-1$, $0\leq t\leq 2l$ and $i\neq ts$.
Thus we have
$$
\begin{aligned}
\Imm\sigma^{m}&=\kk\left\{\begin{split}
&\left\{\q^{j}(\beta_{1}, f^{m}_{(2,j)})-(\beta_{2},f^{m}_{(1,j)})
-\q^{m-1-j}(\alpha_{1}, f^{m}_{(1,j+1)})+(\alpha_{2},f^{m}_{(2,j+1)})
\right\}_{0\leq j\leq m-1,\, j\neq ts},\\
&\left\{-(\beta_{1},f^{m}_{(2,j)})+\q^{j}(\beta_{2}, f^{m}_{(1,j)})
+(\alpha_{1}, f^{m}_{(1,j+1)})-\q^{m-1-j}(\alpha_{2}, f^{m}_{(2,j+1)})
\right\}_{0\leq j\leq m-1,\,j\neq ts},\\
&\left\{(\beta_{1}, f^{m}_{(2,j)})-(\beta_{2}, f^{m}_{(1,j)})
-(\alpha_{1}, f^{m}_{(1,j+1)})+(\alpha_{2}, f^{m}_{(2,j+1)})
\right\}_{\{j= ts\}_{t=0}^{2l}}\end{split}\right\};\\
\Ker\sigma^{m}&=\kk\left\{\begin{split}
&\left\{(\alpha_{1}\beta_{1}, f^{m-1}_{(1,j)}),\ \
(\alpha_{2}\beta_{2}, f^{m-1}_{(2,j)})\right\}_{0\leq j\leq m-1},\\
&\left\{(e_{1}, f^{m-1}_{(1,j)})+(e_{2}, f^{m-1}_{(2,j)})\right\}_{\{j=ts\}^{2l}_{t=0}}
\end{split}\right\}.
\end{aligned}
$$
If $m=2ls+2$, then $\rank\sigma^{m}=2m+2-2l-1$,
and $\rank\widehat{D}_{-1}=\rank D_{i}=\rank\overline{D}_{m-1}=2$, $\rank D_{ts}=1$
for all $0\leq i\leq m-2$, $0\leq t\leq 2l$ and $i\neq ts$. Hence, we have
$$
\begin{aligned}
\Imm\sigma^{m}&=\kk\left\{\begin{split}
&\left\{(\alpha_{1}\beta_{1},f^{m}_{(1,j)})-(\alpha_{2}\beta_{2},f^{m}_{(2,j)})
\right\}_{\{j=ts+1\}_{t=0}^{2l}},\\
&\left\{(\alpha_{1}\beta_{1},f^{m}_{(1,j)}),\ \ (\alpha_{2}\beta_{2}, f^{m}_{(2,j)})
\right\}_{\{ 0\leq j\leq m,\, j\neq ts\}}\end{split}\right\};\\
\Ker\sigma^{m}&=\kk\left\{\begin{split}
&\left\{(\alpha_{1},f^{m-1}_{(1,j)})+(\alpha_{2},f^{m-1}_{(2,j)})
\right\}_{\{j=ts+1\}^{2l}_{t=0}},\\
&\left\{(\beta_{1},f^{m-1}_{(2,j)})+(\beta_{2},f^{m-1}_{(1,j)})
\right\}_{\{j=ts\}^{2l}_{t=0}},\\
&\left\{(\beta_{1},f^{m-1}_{(2,j)})-(\alpha_{1},f^{m-1}_{(1,j+1)})
\right\}_{\{j=ts\}^{2l}_{t=0}},\\
&\left\{\begin{split}
&(\beta_{2},f^{m}_{(1,j)})-\q^{m-1-j}(\alpha_{1},f^{m}_{(1,j+1)}),\\
&\q^{j}(\beta_{1},f^{m}_{(2,j)})+(\alpha_{2},f^{m-1}_{(2,j+1)})
\end{split}\right\}_{ 0\leq j\leq m-2,\,j\neq ts}\end{split}\right\}.
\end{aligned}
$$
If $m=2ls+3$ , then $\rank\sigma^{m}=2m$, and $\rank C_{i}=2$
for all $0\leq i\leq m-1$. Thus, we have
$$
\begin{aligned}
\Imm\sigma^{m}&=\kk\left\{\begin{aligned}
&\left\{\q^{j}(\beta_{1}, f^{m}_{(2,j)})-(\beta_{2}, f^{m}_{(1,j)})
-q^{m-1-j}(\alpha_{1}, f^{m}_{(1,j+1)})+(\alpha_{2}, f^{m}_{(2,j+1)})
\right\}_{0\leq j\leq m-1},\\
&\left\{-(\beta_{1}, f^{m}_{(2,j)})+\q^{j}(\beta_{2}, f^{m}_{(1,j)})
+(\alpha_{1}, f^{m}_{(1,j+1)})-\q^{m-1-j}(\alpha_{2}, f^{m}_{(2,j+1)})
\right\}_{0\leq j\leq m-1}\end{aligned}\right\},\\
\Ker\sigma^{m}&=\kk\left\{(\alpha_{1}\beta_{1}, f^{m-1}_{(1,j)}),\ \
(\alpha_{2}\beta_{2}, f^{m-1}_{(2,j)})\right\}_{0\leq j\leq m-1}.
\end{aligned}
$$
Note that $HH^{m}(A_{\q})\cong\Ker\sigma^{m+1}/\Imm\sigma^{m}$, we obtain
a basis of $HH^{m}(A_{\q})$ in the proposition. The proof of (2) is similar,
so we won't repeat it here.
\end{proof}

At the end of this section, by comparing the dimensions of
Hochschild homology groups and Hochschild cohomology groups
of algebra $A_{\q}$, we can get that if $\q=\pm1$, then
$$
\dim_{\kk}HH^{m}(A_{\q})=\dim_{\kk}HH_{m}(A_{\q})=2m+2,
$$
for any $m\geq0$.

\bigskip
\section{Hochschild cohomology ring of $A_{\q}$}\label{ring}

In this section, the cup product of the cohomology ring
$HH^{\ast}(A_{\q})$ is described by the parallel paths, and
so that the ring structure of $HH^{\ast}(A_{\q})$
and $HH^{\ast}(A_{\q})/{\mathcal{N}}$ are given explicitly.

Recall that, for an arbitrary finite-dimensional $\kk$-algebra
$\Lambda$, the Hochschild cohomology ring $HH^{\ast}(\Lambda)$ is
defined to be
$HH^{\ast}(\Lambda):=\bigoplus_{m\geq0}HH^{m}(\Lambda)$, whose
multiplication is given by the multiplication induced by the Yoneda
product. It is well known that the Yoneda product of
$HH^{\ast}(\Lambda)$ coincides with the cup product defined on the
cohomology of $\Hom_{\Lambda^{e}}({\mathbb{B}}, \Lambda)$, where
${\mathbb{B}}$ is the standard projective $\Lambda^{e}$-resolution
of $\Lambda$. Gerstenhaber showed that $HH^{\ast}(\Lambda)$ under
the cup product
$$
\sqcup: \quad HH^{m}(\Lambda)\times HH^{l}(\Lambda)\longrightarrow
HH^{m+l}(\Lambda)
$$
is graded commutative (see \cite{Ger}).

In \cite{SW}, Siegel and Witherspoon proved that any projective
$\Lambda^{e}$-resolution $\mathbb{X}$ of $\Lambda$ gives rise to the
cup product. They showed that there exists a chain map
$\bigtriangleup:{\mathbb{X}}\rightarrow{\mathbb{X}}
\otimes_{\Lambda}{\mathbb{X}}$
lifting the identity, which is unique up to homotopy, and the cup
product of two elements $\eta$ in $HH^{m}(\Lambda)$ and $\xi$ in
$HH^{n}(\Lambda)$ can be defined by the composition of the maps
$$
{\mathbb{X}}\xrightarrow[]{\ \bigtriangleup\
}{\mathbb{X}}\otimes_{\Lambda}{\mathbb{X}} \xrightarrow[]{\
\eta\otimes\xi \ }\Lambda\otimes_{\Lambda}\Lambda \xrightarrow[]{\
\nu\ }\Lambda,
$$
where $\nu$ is the natural isomorphism.

Here, we will use the minimal projective bimodule resolution
$\mathbb{P}=(P_{m},d_{m})$ of $A_{\q}$ which is constructed in
Section \ref{homology}, to give the cup product of $HH^{\ast}(A_{\q})$.
First recall that the tensor complex
${\mathbb{P}}\otimes_{A_{\q}}{\mathbb{P}}:=({\mathscr{P}}_{m},
b_{m})$ is given by
$$
{\mathscr{P}}_{m}:=\bigoplus_{i+j=m}
P_{i}\otimes_{A_{\q}}P_{j},
$$
and the differential $b_{m}: {\mathscr{P}}_{m}\rightarrow
{\mathscr{P}}_{m-1}$ is given by
$$
b_{m}=\sum_{i=0}^{m-1}\left((-1)^{i}{\rm id}\otimes
d_{m-i}+d_{i+1}\otimes {\rm id}\right),
$$
for all $m\geq1$. It is well known that
${\mathbb{P}}\otimes_{A_{\q}}{\mathbb{P}}$ is also a
projective bimodule resolution of
$A_{\q}\cong A_{\q}\otimes_{A_{\q}}A_{\q}$.

Now we define a family of $(A_{\q})^{e}$-morphisms
$\{\bigtriangleup_{m}: P_{m}\rightarrow {\mathscr{P}}_{m}\}_{m\geq0}$ as
follows:
$$
\begin{aligned}
&\bigtriangleup_{m}\left(\ok(f^{m}_{(i,j)})\otimes
\tk(f^{m}_{(i,j)})\right)\\
&=\sum_{s=0}^{m}\sum_{j'=0}^{N}\q^{(s-j')(j-j')}
\left(\ok(f^{s}_{(i,j')})\otimes\tk(f^{s}_{(i,j')})\right)
\widehat{\otimes}\left(\ok(f^{m-s}_{(i-s,j-j')})\otimes
\tk(f^{m-s}_{(i-s,j-j')})\right),
\end{aligned}
$$
where $N=\min\{s, j\}$, and
$\widehat{\otimes}:=\otimes_{A_{\q}}$.

\begin{lemma} \label{lem5.1}
The morphism
$\bigtriangleup:=(\bigtriangleup_{m})_{m\geq0}$ satisfies the
following commutative diagram
$$
\xymatrix{
\cdots \ar[r]& P_{m}\ar[r]^{d_{m}}\ar[d]^{\bigtriangleup_{m}}
& P_{m-1}\ar[r]\ar[d]^{\bigtriangleup_{m-1}} & \cdots\ar[r]
& P_{1}\ar[r]^{d_{1}}\ar[d]^{\bigtriangleup_{1}} &
P_{0}\ar[r]^{d_{0}}\ar[d]^{\bigtriangleup_{0}}
& A_{\q}\ar[r]\ar@{=}[d] & 0\; \\
\cdots \ar[r]& {\mathscr{P}}_{m}\ar[r]^{b_{m}}&
{\mathscr{P}}_{m-1}\ar[r]
& \cdots\ar[r]& {\mathscr{P}}_{1}\ar[r]^{b_{1}} &
{\mathscr{P}}_{0}\ar[r]^{b_{0}}
& A_{\q}\ar[r] & 0,}
$$
where $b_{0}=\nu\circ(d_{0}\otimes d_{0})$, $d_{0}$ is the multiplication map,
$\nu: A_{\q}\otimes_{A_{\q}}A_{\q}\rightarrow A_{\q}$
is the natural isomorphism.
\end{lemma}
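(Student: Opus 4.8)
The plan is to verify directly that every square of the displayed diagram commutes, after reducing the problem to the distinguished generators. Since $P_m$ and $\mathscr{P}_m$ are free $(A_{\q})^{e}$-modules on the elements $\mathbf{g}^{m}_{(i,j)}:=\ok(f^{m}_{(i,j)})\otimes\tk(f^{m}_{(i,j)})$ (and on the tensors $\mathbf{g}^{s}_{(\cdot,\cdot)}\widehat{\otimes}\,\mathbf{g}^{m-s}_{(\cdot,\cdot)}$ in the case of $\mathscr{P}_m$), and since $d_m$, $b_m$ and all $\bigtriangleup_m$ are $(A_{\q})^{e}$-linear, it suffices to prove
\[
b_{m}\circ\bigtriangleup_{m}(\mathbf{g}^{m}_{(i,j)})=\bigtriangleup_{m-1}\circ d_{m}(\mathbf{g}^{m}_{(i,j)})\qquad(m\ge1)
\]
together with $b_{0}\circ\bigtriangleup_{0}=d_{0}$. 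One first checks that $\bigtriangleup_m$ actually lands in $\mathscr{P}_m$: the summand $\mathbf{g}^{s}_{(i,j')}\widehat{\otimes}\,\mathbf{g}^{m-s}_{(i-s,j-j')}$ is legitimate because $\tk(f^{s}_{(i,j')})\equiv i+s\equiv i-s\equiv\ok(f^{m-s}_{(i-s,j-j')})\pmod2$, and every would-be summand whose lower index falls outside $[0,s]$ or $[0,m-s]$ vanishes by the convention $f^{m-1}_{(i,j)}=0$ for $j<0$ or $j>m-1$. The augmentation square is immediate: $\bigtriangleup_{0}(\mathbf{g}^{0}_{(i,0)})=\mathbf{g}^{0}_{(i,0)}\widehat{\otimes}\,\mathbf{g}^{0}_{(i,0)}$ is carried by $\nu\circ(d_{0}\otimes d_{0})$ to $e_{i}=d_{0}(\mathbf{g}^{0}_{(i,0)})$.

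For $m\ge1$ I would expand both sides on a fixed generator. The right side is transparent once one observes that $\bigtriangleup_{m-1}$, being $(A_{\q})^{e}$-linear, commutes with the left and right multiplications by the arrows $\alpha_i,\beta_i$ occurring in the four-term formula for $d_m$ from Section~\ref{homology}; hence $\bigtriangleup_{m-1}d_{m}(\mathbf{g}^{m}_{(i,j)})$ is a sum of four families in which an arrow coming from $d_m$ sits either at the far left or at the far right of each expanded tensor $\mathbf{g}^{a}_{(\cdot,\cdot)}\widehat{\otimes}\,\mathbf{g}^{b}_{(\cdot,\cdot)}$ with $a+b=m-1$. For the left side one uses that on a summand $P_a\otimes_{A_{\q}}P_b$ of $\mathscr{P}_m$ the differential $b_m$ restricts to $d_a\otimes\mathrm{id}+(-1)^a\mathrm{id}\otimes d_b$ (the $d_0$-terms being suppressed), so
\[
b_{m}\bigtriangleup_{m}(\mathbf{g}^{m}_{(i,j)})=\sum_{s,j'}\q^{(s-j')(j-j')}\!\left(d_{s}(\mathbf{g}^{s}_{(i,j')})\widehat{\otimes}\,\mathbf{g}^{m-s}_{(i-s,j-j')}+(-1)^{s}\mathbf{g}^{s}_{(i,j')}\widehat{\otimes}\,d_{m-s}(\mathbf{g}^{m-s}_{(i-s,j-j')})\right),
\]
into which the four-term formulas for $d_s$ and $d_{m-s}$ are substituted. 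The subtlety is that an arrow produced by $d_s$ on its right, or by $d_{m-s}$ on its left, ends up \emph{between} the two tensor factors, a position that never occurs in $\bigtriangleup_{m-1}d_m$.

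The heart of the proof, and the step I expect to be the main obstacle, is therefore twofold. First, the far-left and far-right contributions of $b_m\bigtriangleup_m$ have to be matched term by term with the four families of $\bigtriangleup_{m-1}d_m$; after reindexing $s$ by $s-1$ where appropriate, this reduces to elementary exponent identities such as $(s{-}j')(j{-}j')+(m{-}s)-(j{-}j')=(m{-}j)+(s{-}j')(j{-}1{-}j')$, which reproduces the factor $\q^{m-j}$ appearing in $d_m$, and its $\beta$-counterpart, together with the observation that the signs $(-1)^{s}$, $(-1)^{m-s}$ and $(-1)^{m}$ combine as required. Second — and here the precise bilinear shape of the exponent $(s-j')(j-j')$ in the definition of $\bigtriangleup$ is essential — all the ``middle'' contributions of $b_m\bigtriangleup_m$ must cancel in pairs: the arrow placed by $d_s$ on its right (from the summand indexed $(s,j')$) against the arrow placed by $d_{m-(s-1)}$ on its left (from the neighbouring summand indexed $(s-1,j'-1)$), the cancellation resting on identities of the form $(a{-}j'')\bigl((j{-}1{-}j'')+1\bigr)=(a{-}j'')(j{-}j'')$, once one has used $x\otimes_{A_{\q}}(\alpha y)=(x\alpha)\otimes_{A_{\q}}y$ and the periodicity conventions $f^{m-1}_{(i,j)}=f^{m-1}_{(i',j)}$, $\alpha_i=\alpha_{i'}$ for $i\equiv i'\pmod2$.

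I would organise this by treating $m$ even and $m$ odd separately, since both the shape of $d_m$ (through the $(-1)^{m}$'s) and the set $\B/\!\!/ F^{m}$ alternate with the parity of $m$, and would deal with the boundary indices $s\in\{0,m\}$ and $j'\in\{0,j\}$ at the end, where one must check that each missing differential term is matched by a vanishing generator or by the simultaneous disappearance of its cancellation partner. No homotopy argument is needed for the lemma itself: once these squares commute, $\bigtriangleup$ is the required chain map lifting $\mathrm{id}_{A_{\q}}$.
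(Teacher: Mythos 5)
Your proposal is correct and follows essentially the same route as the paper's proof: reduce to the free bimodule generators, verify the augmentation square, and then compare the components of $b_{m}\circ\bigtriangleup_{m}$ and $\bigtriangleup_{m-1}\circ d_{m}$ in each summand $P_{s}\widehat{\otimes}P_{m-s-1}$ by direct expansion (the exponent identities you record are exactly the ones needed, and I checked they hold). The only difference is presentational: the paper writes down the four-sum formula for $b_{m}\circ\bigtriangleup_{m}(\cdot)_{s}$ after the ``middle'' terms have already cancelled, whereas you make that pairwise cancellation explicit, which is a useful clarification but not a different argument.
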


\begin{proof} Firstly, it is easy to see that
$d_{0}=b_{0}\circ\bigtriangleup_{0}$. Secondly, for $n=1$, we have
$\bigtriangleup_{0}\circ d_{1}=b_{1}\circ\bigtriangleup_{1}$.
Indeed, for each $\alpha_{i}$,
$$
\begin{aligned}
b_{1}\circ\bigtriangleup_{1}\left(\ok(\alpha_{i})\otimes\tk(\alpha_{i})\right)
&=b_{1}\big((e_{i}\otimes e_{i})\widehat{\otimes}(e_{i}\otimes e_{i+1})
+(e_{i}\otimes e_{i+1})\widehat{\otimes}(e_{i+1}\otimes e_{i+1})\big)\\
&=(e_{i}\otimes e_{i})\widehat{\otimes}(\alpha_{i}\otimes e_{i+1})
-(e_{i}\otimes e_{i})\widehat{\otimes}(e_{i}\otimes\alpha_{i})\\
&\qquad +(\alpha_{i}\otimes e_{i+1})\widehat{\otimes}(e_{i+1}\otimes e_{i+1})
-(e_{i}\otimes\alpha_{i})\widehat{\otimes}(e_{i+1}\otimes e_{i+1})\\
&=\bigtriangleup_{0}\left(\alpha_{i}\otimes\tk(\alpha_{i})
-\ok(\alpha_{i})\otimes \alpha_{i}\right)\\
&=\bigtriangleup_{0}\circ d_{1}\left(\ok(\alpha_{i})\otimes\tk(\alpha_{i})\right).
\end{aligned}
$$
Similarly, one can check that $\bigtriangleup_{0}\circ d_{1}
\left(\ok(\beta_{i})\otimes\tk(\beta_{i})\right)=b_{1}\circ\bigtriangleup_{1}
\left(\ok(\beta_{i})\otimes\tk(\beta_{i})\right)$ for all $\beta_{i}\in F^{1}$.
Thus $\bigtriangleup_{0}\circ d_{1}=b_{1}\circ\bigtriangleup_{1}$.

Finally, let $m\geq2$. For any $a\in F^{m}$, $0\leq s\leq m$, we
denote by $\bigtriangleup_{m-1}\circ d_{m}\left(\ok(a)\otimes\tk(a)\right)_{s}$,
$b_{m}\circ \bigtriangleup_{m}\left(\ok(a)\otimes\tk(a)\right)_{s}$ the $s$-th
direct summand of $\bigtriangleup_{m-1}\circ d_{m}\left(\ok(a)\otimes\tk(a)\right)$
and $b_{m}\circ \bigtriangleup_{m}\left(\ok(a)\otimes\tk(a)\right)$ respectively,
that is, $\bigtriangleup_{m-1}\circ d_{m}\left(\ok(a)\otimes\tk(a)\right)_{s}\subseteq
P_{s}\widehat{\otimes}P_{m-s-1}$ and $b_{m}\circ
\bigtriangleup_{m}\left(\ok(a)\otimes\tk(a)\right)_{s}\subseteq
P_{s}\widehat{\otimes}P_{m-s-1}$. Then we have
$$
\begin{aligned}
&b_{m}\circ\bigtriangleup_{m}\left(\ok(f^{m}_{(i,j)})\otimes
\tk(f^{m}_{(i,j)})\right)_{s}\\
=&\sum_{j'=0}^{N}\q^{(s-j')(j-j'-1)}\left(\alpha_{i}\otimes
\tk(f^{s}_{(i+1,j')})\right)\widehat{\otimes}
\left(\ok(f^{m-s-1}_{(i-s+1,j-j'-1)})\otimes
\tk(f^{m-s-1}_{(i-s+1,j-j'-1)})\right)\\
&+\sum_{j'=0}^{N}\q^{(s-j')(j-j')+j}\left(\beta_{i+1}\otimes
\tk(f^{s}_{(i-1,j')})\right)\widehat{\otimes}
\left(\ok(f^{m-s-1}_{(i-s-1,j-j')})\otimes
\tk(f^{m-s-1}_{(i-s-1,j-j')})\right)\\
&+\sum_{j'=0}^{N}(-1)^{m}\q^{(s-j')(j-j'-1)+m-j}\left(\ok(f^{s}_{(i,j')})
\otimes\tk(f^{s}_{(i,j')})\right)\widehat{\otimes}
\left(\ok(f^{m-s-1}_{(i-s,j-j'-1)})\otimes\alpha_{i+m-1}\right)\\
&+\sum_{j'=0}^{N}(-1)^{m}\q^{(s-j')(j-j')}\left(\ok(f^{s}_{(i,j')})\otimes
\tk(f^{s}_{(i,j')})\right)\widehat{\otimes}\left(\ok(f^{m-s-1}_{(i-s,j-j')})
\otimes\beta_{i-m}\right)\\
=&\bigtriangleup_{m-1}\circ d_{m}\left(\ok(f^{m}_{(i,j)})
\otimes\tk(f^{m}_{(i,j)})\right)_{s},
\end{aligned}
$$
where $N:=\min\{j,s\}$. Therefore, we obtain the commutative diagram.
\end{proof}

Now, for any $m\geq0$ and $\eta:=(a, f)\in L^{m}=\kk(\B /\!\!/
F^{m})$, we identify it with its image $\varphi_{m}(\eta)$ under the
isomorphism $\varphi_{m}: L^{m}\rightarrow\Hom_{A_{\q}^{e}}
(P_{m}, A_{\q})$ which is given in Section \ref{coh}. By the morphism
$\bigtriangleup:=(\bigtriangleup_{m})_{m\geq0}$, the following theorem will give a
description of the cup product using the parallel paths. In fact, it
shows that the cup product is essentially given by concatenation of
paths.

\begin{proposition} \label{pro5.2}
Suppose $\eta:=(a, f)\in
HH^{m}(A_{\q})$ and $\xi:=(a', f')\in HH^{l}(A_{\q})$,
where $a, a'\in \B$, $f\in F^{m}$ and $f'\in F^{l}$. Then
$$
\eta\sqcup\xi=\left\{\begin{array}{lll}\q^{(m-j)j'}\left(aa',
f^{m+l}_{(i,j+j')}\right), & \quad\mbox{ if } f=f^{m}_{(i,j)}\mbox{ and
} f'=f^{l}_{(i-m,j')};\\
0, & \quad\mbox{ otherwise}.\end{array}\right.
$$
\end{proposition}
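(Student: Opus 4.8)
The plan is to compute the cup product at the cochain level via the Siegel--Witherspoon recipe \cite{SW}: for any diagonal approximation $\bigtriangleup'\colon\mathbb{P}\to\mathbb{P}\otimes_{A_{\q}}\mathbb{P}$ lifting ${\rm id}_{A_{\q}}$, the product of $\eta\in\Hom_{A_{\q}^{e}}(P_{m},A_{\q})$ and $\xi\in\Hom_{A_{\q}^{e}}(P_{l},A_{\q})$ is represented by $\nu\circ(\eta\otimes\xi)\circ\pi_{m,l}\circ\bigtriangleup'_{m+l}$, where $\pi_{m,l}\colon\mathscr{P}_{m+l}\to P_{m}\otimes_{A_{\q}}P_{l}$ is the projection onto the summand of bidegree $(m,l)$. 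By Lemma \ref{lem5.1} the family $\bigtriangleup=(\bigtriangleup_{m})_{m\geq0}$ is such an approximation, so I would plug it in; since the product is bilinear it suffices to evaluate it on the basis cochains $(a,f)\in\kk(\B/\!\!/F^{m})$ and $(a',f')\in\kk(\B/\!\!/F^{l})$, identified with morphisms in $\Hom_{A_{\q}^{e}}(P_{\bullet},A_{\q})$ through the isomorphisms $\varphi_{\bullet}$ of Lemma \ref{lem4.1}.

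Concretely, fix $\eta=(a,f^{m}_{(i,j)})$ and $\xi=(a',f')$, and evaluate the composite on a generator $\ok(f^{m+l}_{(i',J)})\otimes\tk(f^{m+l}_{(i',J)})$ of $P_{m+l}$. Applying $\bigtriangleup_{m+l}$ and projecting onto the bidegree $(m,l)$ summand extracts exactly the $s=m$ part of the defining sum, namely
\[
\sum_{j'=0}^{\min\{m,J\}}\q^{(m-j')(J-j')}\bigl(\ok(f^{m}_{(i',j')})\otimes\tk(f^{m}_{(i',j')})\bigr)\,\widehat{\otimes}\,\bigl(\ok(f^{l}_{(i'-m,J-j')})\otimes\tk(f^{l}_{(i'-m,J-j')})\bigr).
\]
Now apply $\varphi_{m}(\eta)\otimes\varphi_{l}(\xi)$. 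By the construction of $\varphi_{m}$ in Lemma \ref{lem4.1}, $\varphi_{m}(\eta)$ kills every term with $f^{m}_{(i',j')}\neq f^{m}_{(i,j)}$ and sends the surviving one to $a$; this forces $i'=i$ and $j'=j$. Then $\varphi_{l}(\xi)$ is nonzero on $\ok(f^{l}_{(i-m,J-j)})\otimes\tk(f^{l}_{(i-m,J-j)})$ only if $f'=f^{l}_{(i-m,J-j)}$; writing $f'=f^{l}_{(i-m,j')}$ (so $J=j+j'$) the surviving term contributes the scalar $\q^{(m-j)(J-j)}=\q^{(m-j)j'}$ and the element $\nu(a\otimes a')=aa'$, which is indeed a path because $\tk(f^{m}_{(i,j)})=\ok(f^{l}_{(i-m,j')})$ is forced by $\widehat{\otimes}=\otimes_{A_{\q}}$. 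Hence the composite sends the generator indexed by $(i,j+j')$ to $\q^{(m-j)j'}aa'$ and all other generators to $0$, so under $\varphi_{m+l}^{-1}$ it equals $\q^{(m-j)j'}(aa',f^{m+l}_{(i,j+j')})$. If instead $f'$ is not of the form $f^{l}_{(i-m,j')}$, then no term of the sum survives for any $(i',J)$ and $\eta\sqcup\xi=0$.

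The conceptual content sits entirely in Lemma \ref{lem5.1}; what remains is bookkeeping, and that is where I expect the only friction. I would be careful about three points: (i) correctly specializing the exponent $\q^{(s-j')(J-j')}$ at $s=m$, $j'=j$ to obtain $\q^{(m-j)(J-j)}=\q^{(m-j)j'}$; (ii) the mod-$2$ congruences that single out the unique surviving pair of generators and force $i'=i$; and (iii) the convention (composition left to right) under which $aa'$ is the concatenation of $a$ followed by $a'$ and genuinely lies in $\ok(f^{m+l}_{(i,j+j')})\,A_{\q}\,\tk(f^{m+l}_{(i,j+j')})$. The ranges $0\le j\le m$ and $0\le j'\le l$ guarantee $0\le j+j'\le m+l$, so $f^{m+l}_{(i,j+j')}$ is defined and no degenerate cases arise.
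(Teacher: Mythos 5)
Your proposal is correct and follows essentially the same route as the paper: the authors likewise plug the diagonal approximation of Lemma \ref{lem5.1} into the Siegel--Witherspoon composite, evaluate on the generator indexed by $(i'',j'')$, observe that only the summand with $s=m$, $t=j$, $i''=i$, $j''=j+j'$ survives, and read off the coefficient $\q^{(m-j)j'}$ and the concatenation $aa'$.
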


\begin{proof}
Let $f=f^{m}_{(i,j)}$ and
$f'=f^{l}_{(i',j')}$. Since the cup product of $\eta$ and $\xi$ is
given by the composition of the maps $ {\mathbb{X}}\xrightarrow[]{\
\bigtriangleup\ }{\mathbb{X}}\otimes_{\Lambda}{\mathbb{X}} \xrightarrow[]{\
\eta\otimes\xi\ }\Lambda\otimes_{\Lambda}\Lambda \xrightarrow[]{\
\nu\ }\Lambda$, we have $\eta\sqcup\xi=0$ if $i'\neq i-m$, and if
$i'=i-m$, then

$(1)$ $(\eta\sqcup\xi)\left(\ok(f^{m+l}_{(i'',j'')})\otimes
\tk(f^{m+l}_{(i'',j'')})\right)=0$, if $i''\neq i$,
or $j''\neq j+j'$;

$(2)$ if $i''= i$ and $j''=j+j'$, we have
$$
\begin{aligned}
&(\eta\sqcup\xi)\left(\ok(f^{m+l}_{(i,j'')})\otimes
\tk(f^{m+l}_{(i,j'')})\right)\\
=&\nu\Bigg(\sum_{s=0}^{m+l}\sum_{t=0}^{T}\q^{(s-t)(j''-t)}
\eta\left(\ok(f^{s}_{(i,t)})\otimes
\tk(f^{s}_{(i,t)})\right)\widehat{\otimes}
\xi\left(\ok(f^{m+l-s}_{(i-s,j''-t)})\otimes
\tk(f^{m+l-s}_{(i-s,j''-t)})\right)\Bigg)\\
=&\q^{(m-j)j'}aa',
\end{aligned}
$$
where $T:=\min\{s, j''\}$. The proof is finished.
\end{proof}

Now using the basis of $HH^{m}(A_{\q})$ in the
pervious section and the description of cup product in
Proposition \ref{pro5.2}, we can give the ring structure of
$HH^{\ast}(A_{\q})$. For convenience,
we denote $\w(y_{1}, y_{2})$ the
exterior algebra generated by $y_{1}, y_{2}$,
and for any ring homomorphisms $R\rightarrow\kk$ and
$S\rightarrow\kk$, denote the pullback by $R\times_{\kk}S$.
Let's first consider the case where $\q$ is not a root of unity.
In this case, the ring structure of $HH^{\ast}(A_{\q})$ is
relatively simple.

\begin{theorem} \label{thm5.3}
If $\q$ is not a root of unity. Then,
as graded $\kk$-algebra, we have the following isomorphism
$$
\theta: \quad HH^{\ast}(A_{\q})\longrightarrow
\kk[z_{1}, z_{2}]/(z_{1}^{2}, z_{1}z_{2}, z_{2}^{2})\times_{\kk}
\w(u_{1}, u_{2}),
$$
which is given by $\widehat{1}:=(e_{1}, f^{0}_{(1,0)})+(e_{2},
f^{0}_{(2,0)})\mapsto 1$, $\widehat{z}_{1}:=(\alpha_{1}\beta_{1},
f^{0}_{(1,0)})+(\alpha_{2}\beta_{2}, f^{0}_{(2,0)})$ $\mapsto z_{1}$,
$\widehat{z}_{2}:=(\alpha_{1}\beta_{1}, f^{0}_{(1,0)})
-(\alpha_{2}\beta_{2}, f^{0}_{(2,0)})\mapsto z_{2}$,
$\widehat{u}_{1}:=(\beta_{1}, f^{1}_{(2,0)})+(\beta_{2}, f^{1}_{(1,0)})
\mapsto u_{1}$, $\widehat{u}_{2}:=(\alpha_{1}, f^{1}_{(1,1)})+(\alpha_{2},
f^{1}_{(2,1)})\mapsto u_{2}$, where $z_{1}, z_{2}$ are in degree 0,
and $u_{1}, u_{2}$ are in degree 1.
\end{theorem}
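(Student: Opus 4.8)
The plan is to produce the algebra map $\theta$ explicitly on generators, verify it is a well-defined $\kk$-algebra homomorphism using Proposition~\ref{pro5.2}, and then check bijectivity by a dimension count that uses Proposition~\ref{prop4.2}. First I would recall from Proposition~\ref{prop4.2} that, since $\q$ is not a root of unity, $HH^{m}(A_{\q})=0$ for $m\geq 3$; together with the explicit bases for $HH^{0}$, $HH^{1}$ computed before Proposition~\ref{prop4.2} and the basis of $HH^{2}$ from Proposition~\ref{prop4.2}, we have $\dim_{\kk}HH^{0}=3$, $\dim_{\kk}HH^{1}=2$, $\dim_{\kk}HH^{2}=1$, and $0$ in higher degrees, so $\dim_{\kk}HH^{\ast}(A_{\q})=6$. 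On the target side, $\kk[z_{1},z_{2}]/(z_{1}^{2},z_{1}z_{2},z_{2}^{2})$ has $\kk$-basis $\{1,z_{1},z_{2}\}$ and $\w(u_{1},u_{2})$ has $\kk$-basis $\{1,u_{1},u_{2},u_{1}u_{2}\}$; the pullback $R\times_{\kk}S$ over the augmentations identifies the two copies of $1$, giving dimension $3+4-1=6$. So the two graded vector spaces have the same total dimension, and it suffices to show $\theta$ is a well-defined surjective algebra homomorphism.

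Next I would pin down $\theta$ on all six basis elements. In degree $0$ set $\widehat{1}\mapsto 1$, $\widehat z_{1}\mapsto z_{1}$, $\widehat z_{2}\mapsto z_{2}$; in degree $1$ set $\widehat u_{1}\mapsto u_{1}$, $\widehat u_{2}\mapsto u_{2}$; in degree $2$ the generator $(\alpha_{1}\beta_{1},f^{2}_{(1,1)})+(\alpha_{2}\beta_{2},f^{2}_{(2,1)})$ must go to $u_{1}u_{2}$ (it is forced, being the unique nonzero element of $HH^{2}$ up to scalar and lying in the degree-$2$ part of the exterior factor). The substance is then to check that these assignments respect the cup product. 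Using Proposition~\ref{pro5.2}, one computes:
\begin{itemize}
\item $\widehat z_{i}\sqcup\widehat z_{j}$: here $m=l=0$, $f=f^{0}_{(i,0)}$, $f'=f^{0}_{(i-0,0)}=f^{0}_{(i,0)}$, so the product is supported only on matching pairs and equals a scalar times $(\alpha_{i}\beta_{i}\cdot\alpha_{i}\beta_{i},f^{0}_{(i,0)})$; but $\alpha_{i}\beta_{i}\cdot\alpha_{i}\beta_{i}=0$ in $A_{\q}$ because $\beta_{i}\alpha_{i-1}\cdots$ hits a relation (indeed $(\alpha_i\beta_i)^2$ involves $\beta_i\alpha_i$ which by the relations $\beta_2\beta_1=\beta_1\beta_2=0$, $\alpha_i\beta_i+\q\beta_{i-1}\alpha_{i-1}$ lands in $\mathrm{rad}^3=0$). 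Hence $\widehat z_{1}^{2}=\widehat z_{1}\widehat z_{2}=\widehat z_{2}^{2}=0$, matching $z_{1}^{2}=z_{1}z_{2}=z_{2}^{2}=0$.
\item $\widehat u_{1}\sqcup\widehat u_{1}$, $\widehat u_{2}\sqcup\widehat u_{2}$: again by Proposition~\ref{pro5.2} one checks the composable-index condition $f'=f^{l}_{(i-m,j')}$ together with the fact that $\beta_{i}\beta_{i}=0$ and $\alpha_{i}\alpha_{i}=0$ in $A_{\q}$ forces these to vanish; so $\widehat u_{1}^{2}=\widehat u_{2}^{2}=0$, matching $u_{1}^{2}=u_{2}^{2}=0$.
\item $\widehat u_{1}\sqcup\widehat u_{2}$ and $\widehat u_{2}\sqcup\widehat u_{1}$: these land in $HH^{2}$, and the only nonzero target is (a scalar multiple of) the degree-$2$ generator. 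A direct application of the formula, tracking the powers of $\q$ and using $\beta_{1}\alpha_{1}$, $\alpha_{1}\beta_{1}$ etc., shows $\widehat u_{1}\sqcup\widehat u_{2}$ equals the degree-$2$ basis element and $\widehat u_{2}\sqcup\widehat u_{1}$ its negative (as it must, by graded commutativity with both factors in odd degree); this is consistent with $u_{1}u_{2}$, $u_{2}u_{1}=-u_{1}u_{2}$ in the exterior algebra.
\item $\widehat z_{i}\sqcup\widehat u_{j}$: these land in $HH^{1}$ spanned by $\widehat u_1,\widehat u_2$; but degree-$1$ classes are $(\alpha_\bullet,\cdot)$ or $(\beta_\bullet,\cdot)$ type with no $\alpha\beta$ or $e$ component, and the product $\alpha_i\beta_i\cdot\alpha_\bullet$, $\alpha_i\beta_i\cdot\beta_\bullet$ is $0$ in $A_{\q}$, so $\widehat z_{i}\sqcup\widehat u_{j}=0$, matching the pullback relation that the ideals $(z_1,z_2)$ and $(u_1,u_2)$ annihilate each other.
\end{itemize}

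Finally I would assemble these: the relations just verified show that $\theta$ extends to a well-defined graded $\kk$-algebra homomorphism from $HH^{\ast}(A_{\q})$ onto the subalgebra generated by $1,z_{1},z_{2},u_{1},u_{2}$, which is all of $\kk[z_{1},z_{2}]/(z_{1}^{2},z_{1}z_{2},z_{2}^{2})\times_{\kk}\w(u_{1},u_{2})$ (every basis element of the target is $1$, a generator, or $u_{1}u_{2}$). Surjectivity plus equality of dimensions ($6=6$) forces $\theta$ to be an isomorphism. I expect the main obstacle to be the degree-$1$ times degree-$1$ computation $\widehat u_{1}\sqcup\widehat u_{2}$: one must carefully apply Proposition~\ref{pro5.2} with the correct index shift $f'=f^{l}_{(i-m,j')}$ and the correct $\q$-power $\q^{(m-j)j'}$, and then recognize the resulting element as the specific generator of $HH^{2}(A_{\q})$ listed in Proposition~\ref{prop4.2} (possibly after rescaling $u_{1}$ or $u_{2}$ by a unit, which is harmless). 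The vanishing statements are comparatively routine once one observes that $\mathrm{rad}^{3}A_{\q}=0$ and that the relevant products of arrows already vanish in $A_{\q}$.
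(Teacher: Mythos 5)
Your proposal is correct and follows essentially the same route as the paper: compute all cup products of the listed generators via Proposition~\ref{pro5.2} (using the vanishing of length-$\geq 3$ paths and the relations of $A_{\q}$), match them against the defining relations of $\kk[z_{1},z_{2}]/(z_{1}^{2},z_{1}z_{2},z_{2}^{2})\times_{\kk}\w(u_{1},u_{2})$, and conclude. Your explicit dimension count ($3+2+1=6$ on both sides, using Proposition~\ref{prop4.2} for the vanishing in degrees $\geq 3$) makes the bijectivity step more transparent than the paper's one-line conclusion; the only nitpick is that the direct computation actually gives $\widehat{u}_{2}\sqcup\widehat{u}_{1}$ (not $\widehat{u}_{1}\sqcup\widehat{u}_{2}$) as the positively-signed basis element of $HH^{2}$, a sign that is immaterial to the isomorphism.
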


\begin{proof}
By using the formula given in Proposition \ref{pro5.2}, we can directly
calculate that $\widehat{1}$ is the unit under the cup product,
$\widehat{z}_{i}\sqcup\widehat{z}_{j}=0$,
$\widehat{z}_{i}\sqcup\widehat{u}_{j}=0$,
$\widehat{u}_{i}\sqcup\widehat{u}_{i}=0$, for $i, j=1,2$,
and $\widehat{u}_{2}\sqcup\widehat{u}_{1}=(\alpha_{1}\beta_{1}, f^{2}_{(1,1)})
+(\alpha_{2}\beta_{2}, f^{2}_{(2,1)})=-\widehat{u}_{1}\sqcup\widehat{u}_{2}$.
Hence the correspondence in the theorem gives an isomorphism
between graded algebras.
\end{proof}

Next, let's consider the case of $\q=\pm1$.

\begin{theorem} \label{thm5.4}
If $\q=\pm1$. Then as graded $\kk$-algebra,
we have the following isomorphism
$$
\theta: \quad HH^{\ast}(A_{\q})\longrightarrow
\left(\kk[z_{1}, z_{2}]/(z_{1}^{2}, z_{1}z_{2}, z_{2}^{2})\times_{\kk}
\w(u_{1}, u_{2}, u_{3}, u_{4})\right)[w_{0}, w_{1}, w_{2}]/I,
$$
which is given by $\widehat{1}:=(e_{1}, f^{0}_{(1,0)})+(e_{2},
f^{0}_{(2,0)})\mapsto 1$, $\widehat{z}_{1}:=(\alpha_{1}\beta_{1},
f^{0}_{(1,0)})+\q(\alpha_{2}\beta_{2}, f^{0}_{(2,0)})$ $\mapsto z_{1}$,
$\widehat{z}_{2}:=(\alpha_{1}\beta_{1}, f^{0}_{(1,0)})
-\q(\alpha_{2}\beta_{2}, f^{0}_{(2,0)})\mapsto z_{2}$,
$\widehat{u}_{1}:=\q(\alpha_{1}, f^{1}_{(1,0)})+(\alpha_{2}, f^{1}_{(2,0)})
\mapsto u_{1}$, $\widehat{u}_{2}:=(\beta_{1}, f^{1}_{(2,0)})+(\beta_{2},
f^{1}_{(1,0)})\mapsto u_{2}$, $\widehat{u}_{3}:=(\alpha_{1}, f^{1}_{(1,1)})
+(\alpha_{2}, f^{1}_{(2,1)})\mapsto u_{3}$,
$\widehat{u}_{4}:=(\beta_{1}, f^{1}_{(2,1)})+\q(\beta_{2},
f^{1}_{(1,1)})\mapsto u_{4}$, $\widehat{w}_{j}:=(e_{1}, f^{2}_{(1,j)})
+\q^{j}(e_{2}, f^{2}_{(2,j)})\mapsto w_{j}$,
$j=0, 1, 2$, where the ideal $I$ is given by
$$
\begin{aligned}
&\big(u_{1}u_{3},\; u_{2}u_{4},\; z_{2}w_{0},\; z_{2}w_{1},\;
u_{1}u_{2}-\q z_{1}w_{0},\; u_{1}u_{4}-z_{1}w_{1},\; u_{1}u_{4}-u_{3}u_{2},\;
u_{3}u_{4}-z_{1}w_{2},\\
&\quad z_{2}w_{2},\; u_{1}w_{1}-\q u_{3}w_{0},\; u_{1}w_{2}-u_{3}w_{1},\;
u_{2}w_{1}-\q u_{4}w_{0},\; u_{2}w_{2}-u_{4}w_{1},\;
w_{1}^{2}-\q w_{0}w_{2}\big),
\end{aligned}
$$
and $z_{1}, z_{2}$ are in degree 0, $u_{1}, u_{2}, u_{3}, u_{4}$ are
in degree 1, $w_{0}, w_{1}, w_{2}$ are in degree 2.
\end{theorem}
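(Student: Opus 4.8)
The plan is to proceed exactly as in the proof of Theorem \ref{thm5.3}, but now the calculation is substantially longer because $HH^{\ast}(A_{\q})$ is no longer finite-dimensional in each total degree sense; instead, by Proposition \ref{prop4.3}, each $HH^{m}(A_{\q})$ for $m\geq 2$ has dimension $2m+2$, and we must identify how these classes assemble multiplicatively. First I would fix the explicit cocycle representatives: for $m$ odd the classes $(\alpha_{1},f^{m}_{(1,j)})+\q^{j-1}(\alpha_{2},f^{m}_{(2,j)})$ and $(\beta_{1},f^{m}_{(2,j)})+\q^{j}(\beta_{2},f^{m}_{(1,j)})$ with $0\leq j\leq m$, and for $m$ even the classes $(\alpha_{1}\beta_{1},f^{m}_{(1,j)})+\q^{j-1}(\alpha_{2}\beta_{2},f^{m}_{(2,j)})$ and $(e_{1},f^{m}_{(1,j)})+\q^{j}(e_{2},f^{m}_{(2,j)})$. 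The candidate generators $\widehat z_{1},\widehat z_{2},\widehat u_{1},\dots,\widehat u_{4},\widehat w_{0},\widehat w_{1},\widehat w_{2}$ listed in the statement are precisely representatives of $HH^{0}$, $HH^{1}$, and $HH^{2}$.

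The core computation uses Proposition \ref{pro5.2}: the cup product $(a,f^{m}_{(i,j)})\sqcup(a',f^{l}_{(i-m,j')})=\q^{(m-j)j'}(aa',f^{m+l}_{(i,j+j')})$ and is zero whenever the second index of the first factor does not match the first index of the second. So I would first record all products of the generators: for instance $\widehat u_{1}\sqcup\widehat u_{2}$ lands in $HH^{2}$ and, after expanding via Proposition \ref{pro5.2} and using $aa'$ for $a=\alpha_{i}$, $a'=\beta_{i}$, one gets a multiple of $(\alpha_{1}\beta_{1},f^{2}_{(1,0)})+\cdots$, i.e. $\q\,\widehat w_{0}$; similarly $\widehat u_{1}\sqcup\widehat u_{4}=\widehat z_{1}\sqcup\widehat w_{1}$ and equals $\widehat u_{3}\sqcup\widehat u_{2}$ because the concatenated paths agree (note $\alpha_{i}\beta_{i}=\beta_{i-1}\alpha_{i-1}$ up to the sign $\q$ in $A_{\q}$, which for $\q=\pm1$ makes these coincide). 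The products $\widehat u_{i}\sqcup\widehat u_{i}=0$ follow because $\alpha_{i}^{2}=0=\beta_{i}^{2}$ in $A_{\q}$, and $\widehat z_{2}\sqcup\widehat w_{j}=0$ follows because $\widehat z_{2}$ is an antisymmetric combination while the product would force a symmetric one. Collecting all of these relations gives exactly the list defining the ideal $I$, and in particular $\widehat w_{1}^{2}=\q\,\widehat w_{0}\widehat w_{2}$, which shows $\widehat w_{0},\widehat w_{1},\widehat w_{2}$ behave like a conic in a polynomial ring. One then checks that the monomials $\widehat u_{1}^{\varepsilon_{1}}\widehat u_{2}^{\varepsilon_{2}}\widehat u_{3}^{\varepsilon_{3}}\widehat u_{4}^{\varepsilon_{4}}\widehat z_{1}^{\delta}\widehat w_{0}^{a}\widehat w_{1}^{b}\widehat w_{2}^{c}$ (with the obvious reductions) span $HH^{\ast}(A_{\q})$ in each degree, and a dimension count against Proposition \ref{prop4.3} (namely $\dim_{\kk}HH^{m}=2m+2$ for $m\geq 1$) confirms that no further relations are needed.

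The main obstacle will be the bookkeeping: verifying that the explicit presentation $\big(\kk[z_{1},z_{2}]/(z_{1}^{2},z_{1}z_{2},z_{2}^{2})\times_{\kk}\w(u_{1},\dots,u_{4})\big)[w_{0},w_{1},w_{2}]/I$ has the same graded dimension as $HH^{\ast}(A_{\q})$ in every degree. Concretely, I would argue that a $\kk$-basis of the right-hand side in degree $m$ is given by $\{w_{0}^{a}w_{1}^{b}w_{2}^{c}: a+c\le \text{something}, b\in\{0,1\}\}$ times the degree-$0$ and degree-$1$ part, exploiting the conic relation $w_{1}^{2}=\q w_{0}w_{2}$ to reduce $b\leq 1$; a short induction then shows the Hilbert series matches $\sum_{m\ge0}(2m+2)t^{m}$ minus the correction in degree $0$. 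Once surjectivity of $\theta$ (generation) and equality of dimensions are in hand, $\theta$ is forced to be an isomorphism. The only genuinely delicate point is confirming the signs and the powers of $\q$ in the relations involving $w_{1}$ (e.g. $u_{1}w_{1}-\q u_{3}w_{0}$, $w_{1}^{2}-\q w_{0}w_{2}$), which come from the factor $\q^{(m-j)j'}$ in Proposition \ref{pro5.2} together with $\q^{2}=1$; I would do these few products by hand carefully and leave the rest to the reader as routine.
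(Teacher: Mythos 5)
Your proposal is correct and follows essentially the same route as the paper: compute all products of the degree $0$, $1$, $2$ generators via Proposition \ref{pro5.2}, show these generate (the paper does this explicitly by exhibiting $HH^{m}(A_{\q})=HH^{m-2}(A_{\q})\sqcup\{\widehat{w}_{0},\widehat{w}_{1},\widehat{w}_{2}\}$ using the basis of Proposition \ref{prop4.3}), verify the relations in $I$, and conclude by comparing graded dimensions. Only two cosmetic slips: $\widehat{u}_{1}\sqcup\widehat{u}_{2}=(\alpha_{1}\beta_{1},f^{2}_{(1,0)})+\q(\alpha_{2}\beta_{2},f^{2}_{(2,0)})$ is $\q\,\widehat{z}_{1}\sqcup\widehat{w}_{0}$ rather than $\q\,\widehat{w}_{0}$ (consistent with the relation $u_{1}u_{2}-\q z_{1}w_{0}$ you state), and the vanishing $\widehat{u}_{i}\sqcup\widehat{u}_{i}=0$ comes from $\alpha_{1}\alpha_{2}=\alpha_{2}\alpha_{1}=\beta_{1}\beta_{2}=\beta_{2}\beta_{1}=0$, not from ``$\alpha_{i}^{2}=0$'', which is not a composable path.
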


\begin{proof}
Firstly, using the same analysis as in Theorem \ref{thm5.3}, we get that
$\widehat{z}_{1}, \widehat{z}_{2}, \widehat{u}_{1}, \widehat{u}_{2},
\widehat{u}_{3}, \widehat{u}_{4}$ generate a subalgebra $\Lambda$ of
$HH^{\ast}(A_{\q})$, which is isomorphic to $\kk[z_{1}, z_{2}]/(z_{1}^{2},
z_{1}z_{2}, z_{2}^{2})\times_{\kk}\w(u_{1}, u_{2}, u_{3}, u_{4})$.

Secondly, in order to prove that $HH^{\ast}(A_{\q})$ is generated by
$\widehat{w}_{1}, \widehat{w}_{2}, \widehat{w}_{3}$ over $\Lambda$, we
need show
$$
HH^{m}(A_{\q})=HH^{m-2}(A_{\q})\sqcup\{\widehat{w}_{0},
\widehat{w}_{1}, \widehat{w}_{2}\},
$$
for any $m\geq2$. Here we shall to discuss on the parity of $m$.
If $m$ is odd, then
$$
\begin{aligned}
\q(\alpha_{1}, f^{m}_{(1,0)})+(\alpha_{2}, f^{m}_{(2,0)})
&=[\q(\alpha_{1}, f^{m-2}_{(1,0)})+(\alpha_{2}, f^{m-2}_{(2,0)})]
\sqcup\widehat{w}_{0},\\
(\beta_{1}, f^{m}_{(2,m)})+\q(\beta_{2}, f^{m}_{(1,m)})
&=[(\beta_{1}, f^{m-2}_{(2,m)})+\q(\beta_{2}, f^{m-2}_{(1,m)})]
\sqcup\widehat{w}_{2},\\
\q^{m-j-1}[(\beta_{1}, f^{m}_{(2,j)})+\q^{j}(\beta_{2}, f^{m}_{(1,j)})]
&=[(\beta_{1}, f^{m-2}_{(2,j-1)})+\q^{j-1}(\beta_{2}, f^{m-2}_{(1,j-1)})]
\sqcup\widehat{w}_{1},\\
\q^{m-j-2}[\q^{j}(\alpha_{1}, f^{m}_{(1,j+1)})+(\alpha_{2}, f^{m}_{(2,j+1)})]
&=[\q^{j-1}(\alpha_{1}, f^{m-2}_{(1,j)})+(\alpha_{2}, f^{m-2}_{(2,j)})]
\sqcup\widehat{w}_{1},
\end{aligned}
$$
for any $0\leq j\leq m-1$.
Hence in this case, $HH^{m}(A_{\q})=HH^{m-2}(A_{\q})\sqcup
\{\widehat{w}_{0},\widehat{w}_{1}, \widehat{w}_{2}\}$.
If $m$ is even, then
$$
\begin{aligned}
&\q^{m-j-1}[(\alpha_{1}\beta_{1}, f^{m}_{(1,j)})
+\q^{j-1}(\alpha_{2}\beta_{2}, f^{m}_{(2,j)})]\\
=&[(\alpha_{1}\beta_{1},f^{m-2}_{(1,j-1)})+
\q^{j-2}(\alpha_{2}\beta_{2}, f^{m}_{(2,j-1)})\sqcup\widehat{w}_{1},\\
\q^{m-j-1}[(e_{1}, f^{m}_{(1,j)})&+\q^{j}(e_{2}, f^{m}_{(2,j)})]
=[(e_{1}, f^{m-2}_{(1,j-1)})+\q^{j-1}(e_{2}, f^{m-2}_{(2,j-1)})]
\sqcup\widehat{w}_{1},
\end{aligned}
$$
for any $0\leq j\leq m$.
That is, $HH^{m}(A_{\q})=HH^{m-2}(A_{\q})\sqcup\widehat{w}_{1}$.

Finally, one can check that the generators satisfy the following relations:
$$
\begin{aligned}
&\widehat{z}_{2}\sqcup\widehat{w}_{0}=\widehat{z}_{2}\sqcup\widehat{w}_{1}
=\widehat{z}_{2}\sqcup\widehat{w}_{2}=\widehat{u}_{1}\sqcup\widehat{u}_{3}
=\widehat{u}_{2}\sqcup\widehat{u}_{4}=0, \\
&\widehat{u}_{1}\sqcup\widehat{u}_{2}
=\q(\alpha_{1}\beta_{1}, f^{2}_{(1,0)})+(\alpha_{2}\beta_{2}, f^{2}_{(2,0)})
=\q(\widehat{z}_{1}\sqcup\widehat{w}_{0}),\\
&\widehat{u}_{1}\sqcup\widehat{u}_{4}
=(\alpha_{1}\beta_{1}, f^{2}_{(1,1)})+(\alpha_{2}\beta_{2}, f^{2}_{(2,1)})
=-(\widehat{u}_{2}\sqcup\widehat{u}_{3})
=(\widehat{z}_{1}\sqcup\widehat{w}_{1}),\\
&\widehat{u}_{3}\sqcup\widehat{u}_{4}
=(\alpha_{1}\beta_{1}, f^{2}_{(1,2)})+\q(\alpha_{2}\beta_{2}, f^{2}_{(2,2)})
=\widehat{z}_{1}\sqcup\widehat{w}_{2},\\
&\widehat{u}_{1}\sqcup\widehat{w}_{1}
=\q(\alpha_{1}, f^{3}_{(1,1)})+\q(\alpha_{2}, f^{3}_{(2,1)})
=\q(\widehat{u}_{3}\sqcup\widehat{w}_{0}),\\
&\widehat{u}_{1}\sqcup\widehat{w}_{2}
=\q(\alpha_{1}, f^{3}_{(1,2)})+(\alpha_{2}, f^{3}_{(2,2)})
=\widehat{u}_{3}\sqcup\widehat{w}_{1}, \\
&\widehat{u}_{2}\sqcup\widehat{w}_{1}
=\q(\beta_{1}, f^{3}_{(2,1)})+(\beta_{2}, f^{3}_{(1,1)})
=\q(\widehat{u}_{4}\sqcup\widehat{w}_{0}),\\
&\widehat{u}_{2}\sqcup\widehat{w}_{2}
=(\beta_{1}, f^{3}_{(2,2)})+(\beta_{2}, f^{3}_{(1,2)})
=\widehat{u}_{4}\sqcup\widehat{w}_{1}, \\
&\widehat{w}_{1}\sqcup\widehat{w}_{1}
=\q(e_{1}, f^{4}_{(1,2)})+\q(e_{2}, f^{4}_{(2,2)})
=\q(\widehat{w}_{0}\sqcup\widehat{w}_{2}).
\end{aligned}
$$
Therefore, the correspondence in the theorem gives
an isomorphism between graded algebras.
\end{proof}

Similar to the discussion of Theorem \ref{thm5.4},
we can get the ring structure of $HH^{\ast}(A_{\q})$ when
$\q$ is a primitive $s$-th $(s>2)$ root of unity.

\begin{theorem} \label{thm5.5}
If $\q$ is a primitive $s$-th $(s>2)$ root of unity. Then,
as graded $\kk$-algebra, we have the following isomorphisms.

(1) Whenever $s$ is odd,
$$
\theta_{1}: \quad HH^{\ast}(A_{\q})\longrightarrow
\kk[z_{1}, z_{2}]/(z_{1}^{2}, z_{1}z_{2}, z_{2}^{2})\times_{\kk}
\left(\w(u_{1}, u_{2})[w_{0}, w_{1}, w_{2}]/(w_{1}^{2}-w_{0}w_{2})\right),
$$
which is given by $(e_{1}, f^{0}_{(1,0)})+(e_{2}, f^{0}_{(2,0)})\mapsto
1$, $(\alpha_{i}\beta_{i}, f^{0}_{(i,0)})\mapsto z_{i}$, $i=1, 2$,
$(\beta_{1}, f^{1}_{(2,0)})+(\beta_{2}, f^{1}_{(1,0)})\mapsto u_{1}$,
$(\alpha_{1}, f^{1}_{(1,1)})+(\alpha_{2}, f^{1}_{(2,1)})\mapsto u_{2}$,
$(e_{1}, f^{2s}_{(1,js)})+(e_{2}, f^{2s}_{(2,js)})\mapsto w_{j}$,
$j=0, 1, 2$.

(2) Whenever $s$ is even,
$$
\theta_{2}: HH^{\ast}(A_{\q}^{1})\rightarrow
\kk[z_{1}, z_{2}]/(z_{1}^{2}, z_{1}z_{2}, z_{2}^{2})\times_{\kk}
\left(\w(u_{1}, u_{2})[w_{0}, w_{1}, w_{2}]/(w_{1}^{2}
-\q^{\frac{s^{2}}{4}}w_{0}w_{2})\right),
$$
which is given by $(e_{1}, f^{0}_{(1,0)})+(e_{2}, f^{0}_{(2,0)})\mapsto
1$, $(\alpha_{i}\beta_{i}, f^{0}_{(i,0)})\mapsto z_{i}$, $i=1, 2$,
$(\beta_{1}, f^{1}_{(2,0)})+(\beta_{2}, f^{1}_{(1,0)})\mapsto u_{1}$,
$(\alpha_{1}, f^{1}_{(1,1)})+(\alpha_{2}, f^{1}_{(2,1)})\mapsto u_{2}$,
$(e_{1}, f^{s}_{(1,\frac{js}{2})})+\q^{\frac{js}{2}}(e_{2},
f^{s}_{(2,\frac{js}{2})})\mapsto w_{j}$, $j=0, 1, 2$.
\end{theorem}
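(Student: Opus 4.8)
The plan is to imitate the proof of Theorem \ref{thm5.4}, replacing the period-$2$ structure there by the period-$2s$ structure (when $s$ is odd), respectively the period-$s$ structure (when $s$ is even), that governs $HH^{\ast}(A_{\q})$ according to Proposition \ref{prop4.4}. Throughout, homogeneous elements are written in the bases of Proposition \ref{prop4.4}, and every cup product is evaluated by the concatenation rule of Proposition \ref{pro5.2}. I treat $s$ odd in detail; the case $s$ even is handled the same way once the twisted generators $w_{j}=(e_{1},f^{s}_{(1,js/2)})+\q^{js/2}(e_{2},f^{s}_{(2,js/2)})$ are used.

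\emph{The base subalgebra.} Using Proposition \ref{pro5.2} together with the bases of $HH^{0}(A_{\q})$ and $HH^{1}(A_{\q})$ computed in Section \ref{coh}, and the fact (Proposition \ref{prop4.4}) that $\dim_{\kk}HH^{2}(A_{\q})=1$ and $HH^{m}(A_{\q})=0$ for $3\le m\le 2s-1$, one checks that $(e_{1},f^{0}_{(1,0)})+(e_{2},f^{0}_{(2,0)})$ is the unit, that $z_{i}:=(\alpha_{i}\beta_{i},f^{0}_{(i,0)})$ satisfy $z_{i}\sqcup z_{j}=0$, that $u_{1}:=(\beta_{1},f^{1}_{(2,0)})+(\beta_{2},f^{1}_{(1,0)})$ and $u_{2}:=(\alpha_{1},f^{1}_{(1,1)})+(\alpha_{2},f^{1}_{(2,1)})$ satisfy $u_{i}\sqcup u_{i}=0$ and $z_{i}\sqcup u_{j}=0$, and that $u_{1}\sqcup u_{2}=-\,u_{2}\sqcup u_{1}$ spans $HH^{2}(A_{\q})$. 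Hence $z_{1},z_{2},u_{1},u_{2}$ generate a subalgebra isomorphic to $\kk[z_{1},z_{2}]/(z_{1}^{2},z_{1}z_{2},z_{2}^{2})\times_{\kk}\w(u_{1},u_{2})$, exactly as in Theorem \ref{thm5.3}.

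\emph{Periodicity.} The heart of the argument is to prove $HH^{m+2s}(A_{\q})=HH^{m}(A_{\q})\sqcup\{w_{0},w_{1},w_{2}\}$ for every $m\ge 0$, where $w_{j}=(e_{1},f^{2s}_{(1,js)})+(e_{2},f^{2s}_{(2,js)})$. Since $\q^{s}=1$, the $\q$-power in Proposition \ref{pro5.2} collapses, and cupping a basis vector $(a,f^{m}_{(i,j)})$ of $HH^{m}(A_{\q})$ with $w_{j_{0}}$ yields a nonzero scalar multiple of the basis vector obtained by the index shift $j\mapsto j+j_{0}s$. Matching the shifted index sets $\{ts+j_{0}s\}_{t=0}^{2l}$ and $\{ts+1+j_{0}s\}_{t=0}^{2l}$ for $j_{0}=0,1,2$ against the index sets $\{t's\}_{t'=0}^{2(l+1)}$ and $\{t's+1\}_{t'=0}^{2(l+1)}$ that Proposition \ref{prop4.4} assigns to degree $m+2s$, one sees that $\{w_{0},w_{1},w_{2}\}$ carries a basis of $HH^{m}(A_{\q})$ onto a spanning set of $HH^{m+2s}(A_{\q})$; therefore $HH^{\ast}(A_{\q})$ is generated over the base subalgebra by $w_{0},w_{1},w_{2}$. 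For $s$ even the shift is $j\mapsto j+j_{0}s/2$ and the twisting coefficients $\q^{j_{0}s/2}$ of $w_{j_{0}}$ enter, which is where extra care is required.

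\emph{Relations and conclusion.} The pullback structure already forces $z_{i}\sqcup w_{j}=z_{i}\sqcup u_{j}=0$, so only the relation $w_{1}\sqcup w_{1}=w_{0}\sqcup w_{2}$ (for $s$ odd), respectively $w_{1}\sqcup w_{1}=\q^{s^{2}/4}\,w_{0}\sqcup w_{2}$ (for $s$ even), remains to be verified; both are a one-line application of Proposition \ref{pro5.2}, the second requiring one to combine the exponent $\q^{(s-j)j'}$ with the two twisting factors $\q^{s/2}$ sitting inside $w_{1}$, after which the terms $\q^{s}=1$ cancel and one is left with $\q^{(s/2)^{2}}=\q^{s^{2}/4}$. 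Granting these relations, the assignments in the theorem define a surjective homomorphism $\theta_{1}$ (resp. $\theta_{2}$) of graded $\kk$-algebras onto $HH^{\ast}(A_{\q})$, and injectivity follows by comparing, in each degree $m$, the $\kk$-dimension of the target ring (with basis $1,z_{1},z_{2}$ in degree $0$ and the monomials $u_{1}^{\varepsilon_{1}}u_{2}^{\varepsilon_{2}}w_{0}^{a}w_{1}^{b}w_{2}^{c}$, $\varepsilon_{i}\in\{0,1\}$, $b\in\{0,1\}$, in positive degrees) with $\dim_{\kk}HH^{m}(A_{\q})$ read off from Proposition \ref{prop4.4}. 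I expect the periodicity step --- in particular reconciling the intricate index sets of Proposition \ref{prop4.4} with the $\q$-twists of Proposition \ref{pro5.2} in the even case, and pinning down the exponent $\q^{s^{2}/4}$ --- to be the main obstacle.
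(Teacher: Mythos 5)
Your proposal is correct and follows essentially the same route as the paper's proof: establish the degree-$0$/degree-$1$ subalgebra via Proposition \ref{pro5.2}, prove that cupping with $w_{0},w_{1},w_{2}$ shifts the index $j$ by multiples of $s$ (resp. $s/2$) and carries bases of lower degrees onto spanning sets of higher degrees, and then verify the single relation $w_{1}^{2}=w_{0}w_{2}$ (resp. $w_{1}^{2}=\q^{s^{2}/4}w_{0}w_{2}$). Your explicit dimension count for injectivity is a welcome addition that the paper leaves implicit, but it does not change the nature of the argument.
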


\begin{proof}
Here we only give the proof of (1). First, it is easy to see that
$\widehat{z}_{1}, \widehat{z}_{2}, \widehat{u}_{1}, \widehat{u}_{2}$ generate a
subalgebra $\Lambda$ of $HH^{\ast}(A_{\q})$, which is isomorphic to $\kk[z_{1},
z_{2}]/(z_{1}^{2}, z_{1}z_{2}, z_{2}^{2})\times_{\kk}\w(u_{1}, u_{2})$.
Second, in order to prove that $HH^{\ast}(A_{\q})$ is generated by
$\widehat{w}_{0}, \widehat{w}_{1}, \widehat{w}_{2}$ over $\Lambda$, we
need show
$$
HH^{m}(A_{\q})=HH^{m-2}(A_{\q})\sqcup\{\widehat{w}_{0},
\widehat{w}_{1}, \widehat{w}_{2}\},
$$
for any $m\geq2$. Here, we shall to discuss it in terms of the values of $m$.
If $m=2ls$, then
$$
\begin{aligned}
(e_{1}, f^{m}_{(1,j)})+(e_{2}, f^{m}_{(2,j)})
&=[(e_{1}, f^{m-2s}_{(1,j)})+(e_{2}, f^{m-2s}_{(2,j)})]
\sqcup\widehat{w}_{0},\\
(e_{1}, f^{m}_{(1,2ls-s)})+(e_{2}, f^{m}_{(2,2ls-s)})
&=[(e_{1}, f^{m-2s}_{(1,2ls-2s)})+(e_{2}, f^{m-2s}_{(2,2ls-2s)})]
\sqcup\widehat{w}_{1},\\
(e_{1}, f^{m}_{(1,2ls)})+(e_{2}, f^{m}_{(2,2ls)})
&=[(e_{1}, f^{m-2s}_{(1,2ls-2s)})+(e_{2}, f^{m-2s}_{(2,2ls-2s)})]
\sqcup\widehat{w}_{2},\end{aligned}$$
for any $j=ts,$ $0\leq t\leq 2l-1$. Hence, in this case, $HH^{m}(A_{\q})=HH^{m-2}(A_{\q})
\sqcup\{\widehat{w}_{0},\widehat{w}_{1}, \widehat{w}_{2}\}$.
If $m=2ls+1$,
$$
\begin{aligned}
(\beta_{1}, f^{m}_{(2,j)})+(\beta_{2}, f^{m}_{(1,j)})
&=[(\beta_{1}, f^{m-2s}_{(2,j)})+(\beta_{2}, f^{m-2s}_{(1,j)})]
\sqcup\widehat{w}_{0},\\
(\beta_{1}, f^{m}_{(2,m-s)})+(\beta_{2}, f^{m}_{(1,m-s)})
&=[(\beta_{1}, f^{m-2s}_{(2,m-2s)})+(\beta_{2}, f^{m-2s}_{(1,m-2s)})]
\sqcup\widehat{w}_{1},\\
(\beta_{1}, f^{m}_{(2,m)})+(\beta_{2}, f^{m}_{(1,m)})
&=[(\beta_{1}, f^{m-2s}_{(2,m-2s)})+(\beta_{2}, f^{m-2s}_{(1,m-2s)})]
\sqcup\widehat{w}_{2},\\
(\alpha_{1}, f^{m}_{(1,j+1)})+(\alpha_{2}, f^{m}_{(2,j+1)})
&=[(\alpha_{1}, f^{m-2s}_{(1,j+1)})+(\alpha_{2}, f^{m-2s}_{(1,j+1)})]
\sqcup\widehat{w}_{0},\\
(\alpha_{1}, f^{m}_{(1,m-s)})+(\alpha_{2}, f^{m}_{(2,m-s)})
&=[(\alpha_{1}, f^{m-2s}_{(1,m-2s)})+(\alpha_{2}, f^{m-2s}_{(1,m-2s)})]
\sqcup\widehat{w}_{1},\\
(\alpha_{1}, f^{m}_{(1,m)})+(\alpha_{2}, f^{m}_{(2,m)})
&=[(\alpha_{1}, f^{m-2s}_{(1,m-2s)})+(\alpha_{2}, f^{m-2s}_{(1,m-2s)})]
\sqcup\widehat{w}_{2},\\
\end{aligned}
$$
for any $j=ts$, $0\leq t\leq 2l-1$. Hence, in this case, $HH^{m}(A_{\q})=HH^{m-2}(A_{\q})
\sqcup\{\widehat{w}_{0},\widehat{w}_{1}, \widehat{w}_{2}\}$.
If $m=2ls+2$, then$$\begin{aligned}
(\alpha_{1}\beta_{1}, f^{m}_{(1,j)})+(\alpha_{2}\beta_{2}, f^{m}_{(2,j)})
&=[(\alpha_{1}\beta_{1}, f^{m-2s}_{(1,j+1)})+(\alpha_{2}\beta_{2}, f^{m-2s}_{(1,j)})]
\sqcup\widehat{w}_{0},\\
(\alpha_{1}\beta_{1}, f^{m}_{(1,m-s)})+(\alpha_{2}\beta_{2}, f^{m}_{(2,m-s)})
&=[(\alpha_{1}\beta_{1}, f^{m-2s}_{(1,m-2s)})+(\alpha_{2}\beta_{2}, f^{m-2s}_{(1,m-2s)})]
\sqcup\widehat{w}_{1},\\
(\alpha_{1}\beta_{1}, f^{m}_{(1,m)})+(\alpha_{2}\beta_{2}, f^{m}_{(2,m)})
&=[(\alpha_{1}\beta_{1}, f^{m-2s}_{(1,m-2s)})+(\alpha_{2}\beta_{2}, f^{m-2s}_{(1,m-2s)})]
\sqcup\widehat{w}_{2},\\
\end{aligned}
$$
for any $j=ts$, $0\leq t\leq 2l-1$. Hence, in this case, $HH^{m}(A_{\q})=HH^{m-2}(A_{\q})
\sqcup\{\widehat{w}_{0},\widehat{w}_{1}, \widehat{w}_{2}\}$.

Finally, one can check that the generators satisfy the following relations:
$$
\begin{aligned}
z_{1}\sqcup z_{1}=z_{1}\sqcup z_{2}=z_{2}\sqcup z_{2},\quad
\widehat{w}_{1}\sqcup\widehat{w}_{1}
=(e_{1}, f^{4s}_{(1,2s)})+(e_{2}, f^{4s}_{(2,2s)})
=(\widehat{w}_{0}\sqcup\widehat{w}_{2}).
\end{aligned}
$$
Therefore, the correspondence in the theorem gives
an isomorphism between graded algebras.
\end{proof}

The support variety of a module over a group algebra is an affine variety
that encodes many of the homological properties of the module.
For any finite-dimensional $\kk$-algebra $\Lambda$, let ${\mathcal{N}}$
be the ideal of $HH^{\ast}(\Lambda)$ generated by all the homogeneous
nilpotent elements. If $HH^{\ast}(\Lambda)/{\mathcal{N}}$ is a finite-dimensional
commutative $\kk$-algebra, then it is used to define the support
varieties for $\Lambda$-modules \cite{SS}. Moreover, Snashall and Solberg in
\cite{SS} conjectured that $HH^{\ast}(\Lambda)/{\mathcal{N}}$ is
finitely generated for any finite-dimensional $\kk$-algebra
$\Lambda$. At the end of this section, let us consider the quotient ring
$HH^{\ast}(\Lambda)/{\mathcal{N}}$.
The ring structure of $HH^{\ast}(A_{\q})/{\mathcal{N}}$ is given
in \cite{ST1} by considering the graded center of the Koszul dual of $A_{\q}$.
Here, using the generators of $HH^{\ast}(A_{\q})$
given in the theorem above, note that
$\widehat{z}_{i}, \widehat{u}_{i}$ are nilpotence, for all $i=1,2$,
we can give the ring structure of $HH^{\ast}(A_{\q})/{\mathcal{N}}$ directly.

\begin{corollary}[\cite{ST1} Theorem 2.6] \label{cor5.6}
For the quotient algebra $HH^{\ast}(A_{\q})/{\mathcal{N}}$, we have

(1) if $\q$ is not a root of unity, then
$HH^{\ast}(A_{\q})/{\mathcal{N}}\cong\kk$;

(2) if $\q=\pm1$, then as graded $\kk$-algebra,
$HH^{\ast}(A_{\q})/{\mathcal{N}}\cong\kk[w_{0}, w_{1}, w_{2}]/
(w_{1}^{2}-\q w_{0}w_{2})$;

(3) if $\q$ is a primitive $s$-th $(s>2)$ root
of unity and $s$ is odd, then as graded $\kk$-algebra,
$HH^{\ast}(A_{\q})/{\mathcal{N}}\cong\kk[w_{0}, w_{1}, w_{2}]
/(w_{1}^{2}-w_{0}w_{2})$;

(4) if $\q$ is a primitive $s$-th $(s>2)$ root
of unity and $s$ is even, then as graded $\kk$-algebra,
$HH^{\ast}(A_{\q})/{\mathcal{N}}\cong\kk[w_{0}, w_{1}, w_{2}]/
(w_{1}^{2}-\q^{\frac{s^{2}}{4}}w_{0}w_{2})$.
\end{corollary}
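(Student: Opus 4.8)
The plan is to read the four cases off the explicit presentations of $HH^{\ast}(A_{\q})$ furnished by Theorems~\ref{thm5.3}, \ref{thm5.4} and \ref{thm5.5}; the only substantive point is to identify $\mathcal{N}$ precisely. In every case I would set $J$ to be the ideal of $HH^{\ast}(A_{\q})$ generated by the homogeneous elements $\widehat{z}_{1}$, $\widehat{z}_{2}$ together with the degree-one generators $\widehat{u}_{i}$ (all four of them when $\q=\pm1$, and $\widehat{u}_{1},\widehat{u}_{2}$ in the remaining cases). Because $\widehat{z}_{i}^{\,2}=0$ (from the relation $z_{i}^{2}=0$) and $\widehat{u}_{i}^{\,2}=0$ (the $u_{i}$ lie in an exterior factor), each of these generators is a homogeneous nilpotent element of $HH^{\ast}(A_{\q})$, hence $J\subseteq\mathcal{N}$.

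Next I would compute the quotient $HH^{\ast}(A_{\q})/J$ from the presentations. Dividing out $\widehat{z}_{1},\widehat{z}_{2}$ and the $\widehat{u}_{i}$ reduces the factor $\kk[z_{1},z_{2}]/(z_{1}^{2},z_{1}z_{2},z_{2}^{2})$ to $\kk$, collapses the exterior factor to $\kk$, and turns every defining relation that carries a factor $z_{i}$ or $u_{i}$ --- such as $u_{1}u_{2}-\q z_{1}w_{0}$, $u_{1}u_{4}-u_{3}u_{2}$, or $z_{2}w_{j}$ in the case $\q=\pm1$ --- into the trivial identity $0=0$. What survives is exactly the subalgebra generated by the classes $\widehat{w}_{j}$ modulo the single surviving quadratic relation among them. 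Hence $HH^{\ast}(A_{\q})/J\cong\kk$ in case~(1), while in cases~(2), (3), (4) it equals $\kk[w_{0},w_{1},w_{2}]/(w_{1}^{2}-\q w_{0}w_{2})$, $\kk[w_{0},w_{1},w_{2}]/(w_{1}^{2}-w_{0}w_{2})$, and $\kk[w_{0},w_{1},w_{2}]/(w_{1}^{2}-\q^{s^{2}/4}w_{0}w_{2})$ respectively.

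It remains to promote $J\subseteq\mathcal{N}$ to the equality $\mathcal{N}=J$, equivalently to show $HH^{\ast}(A_{\q})/J$ has no nonzero homogeneous nilpotent element. In case~(1) the quotient is $\kk$; in the other three cases each form $w_{1}^{2}-cw_{0}w_{2}$ with $c\in\{\q,1,\q^{s^{2}/4}\}$ (nonzero since $\q\neq0$) is a rank-three quadratic form and therefore irreducible in $\kk[w_{0},w_{1},w_{2}]$ --- over a field of characteristic $\neq2$ by the non-vanishing of its discriminant, and when $\ch\kk=2$ by a direct one-line factorisation check --- so $HH^{\ast}(A_{\q})/J$ is an integral domain, hence reduced. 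Consequently $\mathcal{N}/J$ is a nil ideal inside a reduced commutative ring, hence zero, so $\mathcal{N}=J$ and $HH^{\ast}(A_{\q})/\mathcal{N}=HH^{\ast}(A_{\q})/J$ is the ring claimed. The main obstacle is conceptual rather than computational: one must guarantee that \emph{no} homogeneous nilpotent element of $HH^{\ast}(A_{\q})$ outside $J$ feeds into $\mathcal{N}$, and the reducedness argument just given is precisely what excludes this; apart from that, the only genuinely delicate (though brief) point is the irreducibility of $w_{1}^{2}-cw_{0}w_{2}$ in characteristic two.
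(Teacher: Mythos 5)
Your proposal is correct and follows essentially the same route as the paper: the paper simply observes that the $\widehat{z}_{i}$ and $\widehat{u}_{j}$ are homogeneous nilpotents and reads the quotient off the presentations in Theorems~\ref{thm5.3}--\ref{thm5.5}. The one thing you add beyond what the paper records is the verification that the resulting quotient $\kk[w_{0},w_{1},w_{2}]/(w_{1}^{2}-cw_{0}w_{2})$ is reduced (via irreducibility of the quadric, including in characteristic two), which is exactly the step needed to conclude $\mathcal{N}=J$ rather than merely $J\subseteq\mathcal{N}$; the paper leaves this implicit (and defers to \cite{ST1}).
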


Hence the Snashall-Solberg conjecture is true for the
quantum zigzag algebras $A_{\q}$ of type $\widetilde{\mathbf{A}}_{1}$.
This conclusion is very useful for us to understand the representation
theory of this kind of algebra. In fact, we can use the
support varieties to give the complexity of $A_{\q}$-modules.
See literature \cite{EHSST} for details.
In \cite{Her}, the notion of Gerstenhaber ideal of Gerstenhaber algebra is introduced.
Let $(\Lambda^{\bullet},\; \sqcup,\; [\ \,,\,\ ])$ be a Gerstenhaber algebra,
$S\subseteq\Lambda^{\bullet}$ be a subset of homogeneous elements.
Recall that the Gerstenhaber ideal $\mathcal{G}(S)$ of
$(\Lambda^{\bullet},\; \sqcup,\; [\ \,,\,\ ])$ generated by $S$ is the small
subset of $\Lambda^{\bullet}$ containing $S$ and being both an ideal with
respect to $\sqcup$ and $[\ \,,\,\ ]$. For the Gerstenhaber algebras
of quantum zigzag algebras, we have given a detailed characterization,
so it is easy to obtain its Gerstenhaber ideal which generated by all
nilpotent homogeneous elements.

\begin{corollary} \label{cor5.7}
Let $A_{\q}$ be the quantum zigzag algebra. Denote by $\mathcal{G}$ the
Gerstenhaber ideal of $(HH^{\ast}(A_{\q}),\; \sqcup,\; [\ \,,\,\ ])$
generated by all nilpotent homogeneous elements. Then $HH^{\ast}(A_{\q})/\mathcal{G}
\cong\kk$.
\end{corollary}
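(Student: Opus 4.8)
The plan is to deduce everything from Corollary \ref{cor5.6} together with the explicit Gerstenhaber bracket on $HH^{\ast}(A_{\q})$. First I would observe that $\mathcal{G}$ is, in particular, an ideal of $(HH^{\ast}(A_{\q}),\sqcup)$ containing every nilpotent homogeneous element; since $\mathcal{N}$ is by definition the \emph{smallest} such cup-ideal, we get $\mathcal{N}\subseteq\mathcal{G}$, hence a surjection of graded $\kk$-algebras $HH^{\ast}(A_{\q})/\mathcal{N}\twoheadrightarrow HH^{\ast}(A_{\q})/\mathcal{G}$. Moreover $\mathcal{G}$ is proper: the only way to produce $\widehat{1}$ would be a bracket $[\eta,\zeta]$ with $\eta$ a nilpotent class and $|\eta|+|\zeta|=1$, i.e. $\eta=\widehat{z}_{i}\in HH^{0}$ and $\zeta\in HH^{1}$, or vice versa; but each degree-one class acts on the commutative ring $HH^{0}=Z(A_{\q})$ as a $\kk$-linear derivation, which kills $\widehat{1}$ and, using $\widehat{z}_{i}^{2}=0$, cannot acquire a $\widehat{1}$-component, so $\widehat{1}\notin\mathcal{G}$ and $HH^{\ast}(A_{\q})/\mathcal{G}\neq 0$. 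When $\q$ is not a root of unity, Corollary \ref{cor5.6}(1) gives $HH^{\ast}(A_{\q})/\mathcal{N}\cong\kk$, so the surjection above already yields $HH^{\ast}(A_{\q})/\mathcal{G}\cong\kk$. In the remaining cases Corollary \ref{cor5.6}(2)--(4) identifies $HH^{\ast}(A_{\q})/\mathcal{N}$ with $\kk[w_{0},w_{1},w_{2}]/(w_{1}^{2}-c\,w_{0}w_{2})$ for a suitable $c\neq 0$, which is generated as a $\kk$-algebra by $w_{0},w_{1},w_{2}$; hence it remains only to show $\widehat{w}_{0},\widehat{w}_{1},\widehat{w}_{2}\in\mathcal{G}$.

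To achieve this I would use that $\mathcal{G}$ is closed under the Gerstenhaber bracket with arbitrary elements, so that for every nilpotent generator $\eta\in\{\widehat{z}_{i},\widehat{u}_{i}\}$ and every $\zeta\in HH^{\ast}(A_{\q})$ one has $[\eta,\zeta]\in\mathcal{G}$. The relevant brackets are $[\widehat{u}_{i},\widehat{w}_{j}]$: these sit in degree $|\widehat{w}_{j}|$, the same degree as the $\widehat{w}_{k}$, and reducing modulo $\mathcal{N}$ — whose part in that degree is spanned by the $\widehat{z}_{1}\widehat{w}_{k}$-type classes — each $[\widehat{u}_{i},\widehat{w}_{j}]$ becomes a $\kk$-linear combination of $\widehat{w}_{0},\widehat{w}_{1},\widehat{w}_{2}$. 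I would compute these combinations from the Batalin-Vilkovisky identity $[a,b]=-(-1)^{(|a|-1)|b|}\bigl(\Delta(a\sqcup b)-\Delta(a)\sqcup b-(-1)^{|a|}a\sqcup\Delta(b)\bigr)$ and the explicit values of $\Delta$ on the low-degree generators established in the final section of the paper: the term $\Delta(\widehat{u}_{i})\sqcup\widehat{w}_{j}$ contributes a $\widehat{w}_{j}$-term whenever $\Delta(\widehat{u}_{i})$ has a $\widehat{1}$-component, while $\Delta(\widehat{u}_{i}\sqcup\widehat{w}_{j})$ supplies the remaining $\widehat{w}_{k}$-terms. Carrying this out case by case (on $\q=\pm1$, and on $\q$ a primitive $s$-th root of unity with $s$ odd or even, where one also uses the cup-product relations of Theorems \ref{thm5.4} and \ref{thm5.5}) one checks that the resulting vectors span all of $\kk\widehat{w}_{0}\oplus\kk\widehat{w}_{1}\oplus\kk\widehat{w}_{2}$ modulo $\mathcal{N}$. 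Since $\widehat{u}_{i}\in\mathcal{N}\subseteq\mathcal{G}$, each such bracket lies in $\mathcal{G}$, whence $\widehat{w}_{0},\widehat{w}_{1},\widehat{w}_{2}\in\mathcal{G}$; combined with the first paragraph this forces $HH^{\ast}(A_{\q})/\mathcal{G}\cong\kk$.

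The hard part will be the explicit bracket computation in the second paragraph: one must know $\Delta$ (equivalently, the bracket) precisely enough on the degree-one and degree-two generators to be sure that the classes $[\widehat{u}_{i},\widehat{w}_{j}]$ genuinely exhaust $\widehat{w}_{0},\widehat{w}_{1},\widehat{w}_{2}$ modulo $\mathcal{N}$, rather than only spanning a proper subspace, and to verify that none of the scalars that occur are accidentally annihilated (for the primitive $s$-th root cases, that no coefficient is killed by $\q^{s}=1$). All of this is exactly the data produced by the explicit BV and Gerstenhaber description of $HH^{\ast}(A_{\q})$; granting that description, the corollary follows from the bookkeeping above.
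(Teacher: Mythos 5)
Your strategy is the one the paper itself intends (the paper offers no written argument beyond the remark that the Gerstenhaber ideal ``is easy to obtain'' from the explicit description of the bracket), and the reduction is sound: $\mathcal{N}\subseteq\mathcal{G}$ gives a surjection $HH^{\ast}(A_{\q})/\mathcal{N}\twoheadrightarrow HH^{\ast}(A_{\q})/\mathcal{G}$; properness of $\mathcal{G}$ follows from the explicit values $[\widehat{z}_{i},\widehat{u}_{j}]\in\kk\widehat{z}_{1}+\kk\widehat{z}_{2}$ (your abstract derivation argument via $\widehat{z}_{i}^{2}=0$ only yields $2z\,D(z)=0$ and so does not by itself rule out a $\widehat{1}$-component when $\ch\kk=2$, but the computed brackets close this); and by Corollary \ref{cor5.6} everything reduces to showing $\widehat{w}_{0},\widehat{w}_{1},\widehat{w}_{2}\in\mathcal{G}$, which you propose to extract from the classes $[\widehat{u}_{i},\widehat{w}_{j}]\in\mathcal{G}$.

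The verification you defer is, however, exactly where the content sits, and against the paper's own tables it does not go through uniformly. From Corollary \ref{cor6.7} and Theorem \ref{thm6.11}, for $\q=\pm1$ one has $[u_{1},w_{1}]=\mp w_{0}$, $[u_{2},w_{1}]=-w_{1}$, $[u_{4},w_{1}]=-w_{2}$, so all three classes enter $\mathcal{G}$ with unit coefficients in every characteristic. For $\q$ a primitive $s$-th root of unity with $s$ even (Theorem \ref{thm6.13}(2)) the coefficients are $-s$ and $-s/2$, invertible because $s$ is prime to $\ch\kk$. But for $s$ odd (Theorem \ref{thm6.13}(1)) the only nonzero brackets meeting $w_{0}$ and $w_{2}$ are $[u_{1},w_{0}]=-2s\,w_{0}$ and $[u_{2},w_{2}]=-2s\,w_{2}$: the obstruction you anticipated materializes not through $\q^{s}=1$ but through the characteristic. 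If $\ch\kk=2$ these brackets vanish, only $w_{1}=(-s)^{-1}[u_{1},w_{1}]$ is produced, and a direct check using the derivation property of the bracket shows that $\mathcal{N}+(w_{1})$ is already closed under $\sqcup$ and $[\ \,,\,\ ]$, so $\mathcal{G}=\mathcal{N}+(w_{1})$ and $HH^{\ast}(A_{\q})/\mathcal{G}\cong\kk[w_{0},w_{2}]/(w_{0}w_{2})\neq\kk$. Thus in that one case your argument cannot conclude, and as far as the data recorded in Theorem \ref{thm6.13} shows, the statement itself requires either the hypothesis $\ch\kk\neq 2$ or some further source of elements of $\mathcal{G}$ not visible from the listed generator brackets. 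In all remaining cases your proposal becomes a complete proof once the quoted bracket values are substituted.
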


\bigskip
\section{Batalin-Vilkovisky algebraic structure on $HH^{\ast}(A_{\q})$}\label{BV}

In this section, we construct two comparison morphisms between the
minimal projective bimodule resolution ${\mathbb{P}}=(P_{n},d_{n})$
and the reduced bar resolution $\bar{{\mathbb{B}}}=(\bar{B}_{m},
\bar{d}_{m})$ of $A_{\q}$ by using the weak self-homotopy.
By these comparison morphisms, and applying  the bilinear form constructed
by Tradler and Volkov, we give the Batalin-Vilkovisky algebraic structure
on $HH^{\ast}(A_{\q})$ for all $\q\neq0$.

Let $\Lambda$ be an algebra over field $\kk$. Given two left
$\Lambda$-modules $M$ and $N$, let $\mathbb{C}$ (resp. $\mathbb{D}$)
be a projective resolution of $M$ (resp. $N$). Then, for each
morphism of $\Lambda$-modules $f: M\rightarrow N$, there exists a
chain map $\bar{f}=(f_{i})_{i\geq0}: {\mathbb{C}}\rightarrow{\mathbb{D}}$
lifting $f$. Where $\bar{f}$ is called comparison morphism.
Here, we will use the method in \cite{IIVZ} to construct the comparison
morphisms between the minimal projective bimodule resolution and the
reduced bar resolution of $A_{\q}$.

For any $\kk$-algebra $\Lambda$, let
$$
\xymatrix@C=2em{
\cdots \ar[r]& Q_{m}\ar[r]^{d_{m}^{Q}} & Q_{m-1}\ar[r]&\cdots\ar[r]
& Q_{2}\ar[r]^{d_{2}^{Q}}& Q_{1}\ar[r]^{d_{1}^{Q}}&
Q_{0}\ar[r]^{d_{0}^{Q}} & N\ar[r]& 0  }
$$
be a complex of left $\Lambda$-modules. Recall that a weak self-homotopy
of this complex is a collection of $\kk$-linear maps
$t_{m}: Q_{m}\rightarrow Q_{m+1}$ for each $m\geq 0$ and
$t_{-1}: N\rightarrow Q_{0}$ such that $d_{0}^{Q}\circ t_{-1}=\I_{N}$,
and $t_{m-1}\circ d_{m}^{Q}+d_{m+1}^{Q}\circ t_{m}=\I_{Q_{m}}$ for $m\geq 0$
(see \cite{BZZ}). It is shown in \cite{IIVZ} that, each exact complex of
left $\Lambda$-modules has a weak self-homotopy $\{t_{i}\}_{i\geq -1}$
such that $t_{i+1}\circ t_{i}=0$ for any $i\geq -1$.

For that $\kk$-algebra $A_{\q}$, in section \ref{homology}, we have constructed a
minimal projective bimodule resolution ${\mathbb{P}}=(P_{m},d_{m})$
of $A_{\q}$.  Since this resolution splits as complexes of one-sided
modules, one can even choose a weak self-homotopy $\{t_{i}\}_{i\geq -1}$
which are right module homomorphisms.
Define $t_{-1}: A_{\q}\rightarrow P_{0}$,
$t_{-1}(e_{i})=e_{i}\otimes e_{i}$, for $i=1, 2$;
and if $m\geq 0$, $t_{m}: P_{m}\rightarrow P_{m+1}$ is given
as following: for any $i=1, 2$,

\begin{itemize}
\item[(1)] $t_{m}\left(\ok(f^{m}_{(i,j)})\otimes\tk(f^{m}_{(i,j)})\right)=0$,
     \qquad $0\leq j\leq m$;

\item[(2)] $t_{m}\left(\beta_{i}\otimes\tk(f^{m}_{(i,j)})\right)=
     \q^{-j}\ok(f^{m+1}_{(i+1,j)})\otimes
     \tk(f^{m+1}_{(i+1,j)})$,\qquad $0\leq j\leq m$;

\item[(3)] $t_{m}\left(\alpha_{i-1}\otimes\tk(f^{m}_{(i,j)})\right)=
     \left\{\begin{array}{ll} 0, &\quad \mbox{if } 0\leq j\leq m-1,\\
     \ok(f^{m+1}_{(i-1,j+1)})\otimes\tk(f^{m+1}_{(i-1,j+1)}),
     &\quad \mbox{if } j=m, \mbox{ or }m=0;\end{array}\right.$

\item[(4)] $t_{m}\left(\alpha_{i}\beta_{i}\otimes\tk(f^{m}_{(i,j)})\right)$
\end{itemize}
\vspace{-1.6mm}
\hspace{0.5cm}$=\left\{\begin{array}{ll}
     \q^{-j}\alpha_{i}\otimes\tk(f^{m+1}_{(i+1,j)}),
     &\quad \mbox{if } 0\leq j\leq m-1,\\
     \q^{-m}\alpha_{i}\otimes\tk(f^{m+1}_{(i+1,m)})
     +(-1)^{m}\q^{-m}\ok(f^{m+1}_{(i,m+1)})\otimes\beta_{i+m},
     &\quad \mbox{if } j=m, \mbox{ or }m=0.\end{array}\right.$\\
Then, it is easy to see that $\{t_{i}\}_{i\geq -1}$ satisfying
$t_{i+1}\circ t_{i}=0$ for any $i\geq -1$.
Moreover, we have the following lemma.

\begin{lemma}\label{lem6.1}
The defined maps $\{t_i\}_{i\geq -1}$ above form a weak self-homotopy
over the minimal projective resolution ${\mathbb{P}}=(P_{m},d_{m})$.
\end{lemma}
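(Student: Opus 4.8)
The plan is to verify directly the two defining identities of a weak self-homotopy, namely $d_{0}\circ t_{-1}=\I_{A_{\q}}$ and $t_{m-1}\circ d_{m}+d_{m+1}\circ t_{m}=\I_{P_{m}}$ for all $m\geq0$. The first is immediate: $d_{0}$ is the multiplication map and $t_{-1}(e_{i})=e_{i}\otimes e_{i}$, so $d_{0}(t_{-1}(e_{i}))=e_{i}$; since $\{e_{1},e_{2}\}$ generates $A_{\q}$ as a right module and both $d_{0}$ and $t_{-1}$ are right $A_{\q}$-linear, this gives $d_{0}\circ t_{-1}=\I_{A_{\q}}$. For the second identity, note that every $d_{m}$ is an $A_{\q}^{e}$-homomorphism, hence in particular right $A_{\q}$-linear, and each $t_{m}$ is right $A_{\q}$-linear by construction; thus both sides are right $A_{\q}$-linear and it suffices to check the equality on a set of right-module generators of $P_{m}$. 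For each summand $A_{\q}\ok(f^{m}_{(i,j)})\otimes\tk(f^{m}_{(i,j)})A_{\q}$, the four elements $\ok(f^{m}_{(i,j)})\otimes\tk(f^{m}_{(i,j)})$, $\alpha_{i-1}\otimes\tk(f^{m}_{(i,j)})$, $\beta_{i}\otimes\tk(f^{m}_{(i,j)})$, $\alpha_{i}\beta_{i}\otimes\tk(f^{m}_{(i,j)})$ form such a generating set, because $\{\ok(f^{m}_{(i,j)}),\alpha_{i-1},\beta_{i},\alpha_{i}\beta_{i}\}$ is a $\kk$-basis of $A_{\q}\ok(f^{m}_{(i,j)})$; these are exactly the elements on which $t_{m}$ is prescribed in cases (1)--(4), so there is also no well-definedness issue to worry about ($t_{m}$ is simply defined on a basis and extended right-linearly).

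The heart of the argument is then a case-by-case computation of $t_{m-1}\circ d_{m}$ and $d_{m+1}\circ t_{m}$ on each of these four generator types, using the explicit formula for $d_{m}$ from Section~\ref{homology} ($d_{1}(\ok(f)\otimes\tk(f))=f\otimes\tk(f)-\ok(f)\otimes f$, and the four-term expression for $m\geq2$) together with the defining formulas for $t_{m}$, handling $m=0$ and $m=1$ as small base cases. For a generator of type (1) one has $t_{m}(\ok(f^{m}_{(i,j)})\otimes\tk(f^{m}_{(i,j)}))=0$, so the identity reduces to $t_{m-1}(d_{m}(\ok(f^{m}_{(i,j)})\otimes\tk(f^{m}_{(i,j)})))=\ok(f^{m}_{(i,j)})\otimes\tk(f^{m}_{(i,j)})$; expanding $d_{m}$ and applying the relevant cases of $t_{m-1}$ to each term, the contributions from the $\beta$-part of $d_{m}$ (case (2) of $t_{m-1}$) and from the $\alpha$-part (case (3), nonzero only in the boundary situation $j=m$) recombine to the generator, using the inductive definition $f^{m}_{(i,j)}=\alpha_{i}f^{m-1}_{(i+1,j-1)}+\q^{j}\beta_{i-1}f^{m-1}_{(i-1,j)}$ and the convention $f^{m-1}_{(i,j)}=0$ for $j<0$ or $j>m-1$. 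Types (2), (3), (4) are treated analogously, now with $d_{m+1}\circ t_{m}$ supplying most of the terms; the boundary subcases $j=m$ (and $m=0$) in the definition of $t_{m}$ force matching subcases in the verification, and the parity of $m$ enters only through the signs $(-1)^{m}$, $(-1)^{m+1}$ carried by $d_{m}$ and $d_{m+1}$.

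The main obstacle I anticipate is purely organizational: keeping the bookkeeping of subscripts modulo $2$ (the indices $i-m-1$, $i-m$, etc.), the powers of $\q$, and the signs consistent across all subcases, and in particular ensuring that at the ends $j=0$ and $j=m$ the surplus or missing terms cancel. I expect the fiddliest single case to be a generator of type (4), $\alpha_{i}\beta_{i}\otimes\tk(f^{m}_{(i,m)})$, where $t_{m}$ outputs two terms (one built from $\alpha_{i}$, one from $\beta_{i+m}$), and one must check that $d_{m+1}$ of these two terms, added to $t_{m-1}(d_{m}(\alpha_{i}\beta_{i}\otimes\tk(f^{m}_{(i,m)})))$, reassembles $\alpha_{i}\beta_{i}\otimes\tk(f^{m}_{(i,m)})$ with coefficient exactly $1$. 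Once this is done for $i=1,2$ and both parities of $m$, the lemma follows; the same inspection of the formulas also yields the relation $t_{i+1}\circ t_{i}=0$ recorded just before the statement.
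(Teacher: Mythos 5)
Your proposal is correct and follows essentially the same route as the paper: both reduce to checking $d_{0}\circ t_{-1}=\I$ and $t_{m-1}\circ d_{m}+d_{m+1}\circ t_{m}=\I_{P_{m}}$ on the right-module generators $b\otimes\tk(f^{m}_{(i,j)})$ with $b$ running over the basis $\{\ok(f^{m}_{(i,j)}),\,\alpha_{i-1},\,\beta_{i},\,\alpha_{i}\beta_{i}\}$ of $A_{\q}\ok(f^{m}_{(i,j)})$, exactly the elements on which $t_{m}$ is prescribed. The paper carries out in full only the case you correctly single out as the fiddliest, namely $\alpha_{i}\beta_{i}\otimes\tk(f^{m}_{(i,m)})$, and leaves the remaining generators to an analogous check.
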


\begin{proof}
Firstly, we have $d_{0}\circ t_{-1}(e_{i})=d_{0}(e_{i}\otimes e_{i})
=e_{i}$, for $i=1, 2$. That is
$d_{0}\circ t_{-1}=\I_{A_{\q}}$.
Secondly, for any $m>0$, $i=1, 2$, we have
$$
\begin{aligned}
&d_{m+1}\circ t_{m}\left(\alpha_{i}\beta_{i}\otimes\tk(f^{m}_{(i,m)})\right)\\
=&d_{m+1}\left(\q^{-m}\alpha_{i}\otimes\tk(f^{m+1}_{(i+1,m)})
+(-1)^{m}\q^{-m}\ok(f^{m+1}_{(i,m+1)})\otimes\beta_{i+m}\right)\\
=&\q^{-m}\alpha_{i}\Big(\alpha_{i+1}\otimes\tk(f^{m}_{(i+2,m-1)})
+(-1)^{m+1}\q\ok(f^{m}_{(i+1,m-1)})\otimes\alpha_{i+m-1}\\
&\hspace{0.5cm}+\q^{m}\beta_{i}\otimes\tk(f^{m}_{(i,m)})
+(-1)^{m+1}\ok(f^{m}_{(i+1,m)})\otimes\beta_{i+m}\Big)\\
&\hspace{0.5cm}+(-1)^{m}\q^{-m}\left(\alpha_{i}\otimes\tk(f^{m}_{(i+1,m)})
+(-1)^{m+1}\ok(f^{m}_{(i,m)})\otimes\alpha_{i+m}\right)\beta_{i+m}\\
=&(-1)^{m+1}\q^{1-m}\alpha_{i}\otimes\alpha_{i+m-1}
+\alpha_{i}\beta_{i}\otimes\tk(f^{m}_{(i,m)})
-\q^{-m}\ok(f^{m}_{(i,m)})\otimes\alpha_{i+m}\beta_{i+m},
\end{aligned}
$$
and
$$
\begin{aligned}
&t_{m-1}\circ d_{m}\left(\alpha_{i}\beta_{i}\otimes\tk(f^{m}_{(i,m)})\right)\\
=&t_{m-1}\left(\alpha_{i}\beta_{i}
\left(\alpha_{i}\otimes\tk(f^{m-1}_{(i+1,m-1)})
+(-1)^{m}\ok(f^{m-1}_{(i,m-1)})\otimes\alpha_{i+m-1}\right)\right)\\
=&(-1)^{m}t_{m-1}\left(\alpha_{i}\beta_{i}\otimes
\tk(f^{m-1}_{(i,m-1)})\right)\alpha_{i+m-1}\\
=&(-1)^{m}\q^{1-m}\alpha_{i}\otimes\alpha_{i+m-1}
-\q^{-m}\ok(f^{m}_{(i,m)})\otimes\beta_{i+m-1}\alpha_{i+m-1}.
\end{aligned}
$$
Then, it is easy to see $[d_{m+1}\circ t_{m}+t_{m-1}\circ d_{m}]
\left(\alpha_{i}\beta_{i}\otimes\tk(f^{m}_{(i,m)})\right)
=\alpha_{i}\beta_{i}\otimes\tk(f^{m}_{(i,m)})$.
Similarly, one can check that the mapping $d_{m+1}\circ t_{m}+t_{m-1}\circ d_{m}$
remain unchanged on other generators of $P_{m}$ as right
$A_{\q}$-module. That is to say, $d_{m+1}\circ t_{m}+t_{m-1}\circ
d_{m}=\I_{P_{m}}$.

In conclusion, $\{t_{i}\}_{i\geq -1}$ form a weak self-homotopy.
\end{proof}

Consider the reduced bar resolution $\bar{{\mathbb{B}}}
=(\bar{B}_{m}, \bar{d}_{m})$ of $A_{\q}$.
Recall that $\bar{B}_{m}=A_{\q}\otimes_{E}
\bar{A}_{\q}^{\otimes_{E} m}\otimes_{E} A_{\q}$,
where $E$ is the subalgebra of $A_{\q}$
generated by $\{e_{1}, e_{2}\}$.
For convenience, we write a element in $\bar{B}_{m}$ as
$a_{0}\boti a_{1}\boti\dots\boti a_{m}\boti a_{m+1}$, where
$\boti:=\otimes_{E}$. Then
$\bar{{\mathbb{B}}}=(\bar{B}_{m}, \bar{d}_{m})$
has a family of right module homomorphisms $\{s_{i}\}_{i\geq -1}$ as a weak
self-homotopy, which is defined by the formula
$$
s_{m}(a_{0}\boti a_{1}\boti\dots\boti a_{m}\boti\tk(a_{m}))
=\ok(a_{0})\boti a_{0}\boti a_{1}\boti\dots\boti a_{m}\boti\tk(a_{m}).
$$
And it is easy to see that $s_{m+1}\circ s_{m}=0$ for $m\geq -1$.

Now, let us consider the comparison morphisms between ${\mathbb{P}}=(P_{m},d_{m})$
and $\bar{{\mathbb{B}}}=(\bar{B}_{m}, \bar{d}_{m})$.
Firstly, we shall construct a family of morphisms $\Phi=(\Phi_{m})_{m\geq0}$
from ${\mathbb{P}}=(P_{m},d_{m})$ to $\bar{{\mathbb{B}}}=(\bar{B}_{m},
\bar{d}_{m})$ as follows:
\begin{itemize}
\item[(1)] $\Phi_{0}: P_{0}=\bigoplus_{i=1}^{n+1}A_{\q}e_{i}\otimes
    e_{i}A_{\q}\longrightarrow \bar{B}_{0}
    =A_{\q}\otimes_{E} A_{\q}$ is the bimodule isomorphism
    $(e_{i}\otimes e_{i})\mapsto e_{i}\boti e_{i}$;

\item[(2)] for $m\geq1$, the bimodule morphism $\Phi_{m}$ is defined inductively
     by the map $s_{m-1}\circ\Phi_{m-1}\circ d_{m}$ acting on the free basis
     elements of $P_{m}$ as bimodule.
\end{itemize}
We now define $g^{1}_{(i+1,0)}=\beta_{i}$, and $g^{1}_{(i,1)}=\alpha_{i}$,
for $i=1, 2$;
for $m\geq2$, define inductively $g^{m}_{(i,j)}$ by
setting
$$
g^{m}_{(i,j)}=\alpha_{i}\boti g^{m-1}_{(i+1,j-1)}
+\q^{j}\beta_{i-1}\boti g^{m-1}_{(i-1,j)},
$$
for any $i=1, 2$ and $0\leq j\leq m$,
where $g^{m-1}_{(i,j)}=g^{m-1}_{(i',j)}$ if $i\equiv i' \mod2$,
$\beta_{j}=\beta_{j'}$ if $j\equiv j' \mod2$, and $g^{m-1}_{(i,j)}=0$
if $j<0$ or $j>m-1$.
Then we have

\begin{lemma}\label{lem6.2}
The morphism $\Phi=(\Phi_{m})_{m\geq0}: {\mathbb{P}}=(P_{m},d_{m})
\rightarrow \bar{{\mathbb{B}}}=(\bar{B}_{m},
\bar{d}_{m})$ is a chain map, and for $m\geq1$,
$$
\Phi_{m}\left(\ok(f^{m}_{(i,j)})\otimes\tk(f^{m}_{(i,j)})\right)
=\ok(g^{m}_{(i,j)})\boti g^{m}_{(i,j)}\boti\tk(g^{m}_{(i,j)}),
$$
for any $i=1, 2$ and $0\leq j\leq m$, where
$\ok(g^{m}_{(i,j)})=\ok(f^{m}_{(i,j)})$,
$\tk(g^{m}_{(i,j)})=\tk(f^{m}_{(i,j)})$.
\end{lemma}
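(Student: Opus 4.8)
The plan is to verify the claimed closed formula for $\Phi_m$ by induction on $m$, using the inductive definition $\Phi_m := s_{m-1}\circ\Phi_{m-1}\circ d_m$ (on free bimodule generators) together with the explicit weak self-homotopy $\{s_i\}$ on $\bar{\mathbb{B}}$ and the explicit differential $d_m$ on $\mathbb{P}$ from Proposition~\ref{pro3.1}. The base case $m=1$ is a direct check: applying $d_1$ to $\ok(f^1_{(i,j)})\otimes\tk(f^1_{(i,j)})$ produces $f^1_{(i,j)}\otimes\tk - \ok\otimes f^1_{(i,j)}$, then $\Phi_0$ turns this into an element of $\bar B_0$, and finally $s_0$ (which prepends the origin idempotent) yields exactly $\ok(g^1_{(i,j)})\boti g^1_{(i,j)}\boti\tk(g^1_{(i,j)})$ once we use the defining identifications $g^1_{(i+1,0)}=\beta_i$, $g^1_{(i,1)}=\alpha_i$; the agreement of origins and termini $\ok(g^1_{(i,j)})=\ok(f^1_{(i,j)})$, $\tk(g^1_{(i,j)})=\tk(f^1_{(i,j)})$ is immediate from the definitions of $F^1$ and these $g^1$.

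For the inductive step, assume the formula holds for $m-1$. I would feed the four-term expression for $d_m(\ok(f^m_{(i,j)})\otimes\tk(f^m_{(i,j)}))$ (the two ``$\alpha_i\otimes$'', ``$\beta_{i-1}\otimes$'' terms on the left and the two ``$\otimes\alpha$'', ``$\otimes\beta$'' terms with the sign $(-1)^m$ on the right) through $\Phi_{m-1}$, which by the induction hypothesis replaces each $\ok(f^{m-1}_{(\cdot,\cdot)})\otimes\tk(f^{m-1}_{(\cdot,\cdot)})$ by the corresponding $\ok(g^{m-1})\boti g^{m-1}\boti\tk(g^{m-1})$. After multiplying by the appropriate arrows $\alpha_i,\beta_{i-1},\alpha_{\cdot},\beta_{\cdot}$ in $\bar B_{m-1}$, I apply $s_{m-1}$, which prepends the common origin idempotent. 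The two right-hand terms (the ones carrying the sign $(-1)^m$ and multiplying $g^{m-1}$ on the right by an arrow) get killed: after applying $s_{m-1}$ one obtains an element whose first tensor slot over $E$ is an idempotent while a later slot is a product $g^{m-1}\cdot(\text{arrow})$; using the relation $\Phi_m = s_{m-1}\Phi_{m-1}d_m$ together with $s_{m-1}\circ s_{m-2}=0$ — more precisely, because those two terms already lie in the image of $s_{m-2}$ applied to something in $\bar B_{m-2}$, one argues they are annihilated, or alternatively one checks directly that the right multiplications reassemble into $g^{m-1}_{(i,j)}$ times a terminal arrow which, upon prepending the origin, does not contribute a new bar-length-$m$ term. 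The surviving contribution is $s_{m-1}$ applied to $\alpha_i\boti\ok(g^{m-1}_{(i+1,j-1)})\boti g^{m-1}_{(i+1,j-1)}\boti\tk + \q^j\,\beta_{i-1}\boti\ok(g^{m-1}_{(i-1,j)})\boti g^{m-1}_{(i-1,j)}\boti\tk$, and prepending the origin idempotent $\ok(f^m_{(i,j)})$ gives
$$
\ok(f^m_{(i,j)})\boti\Big(\alpha_i\boti g^{m-1}_{(i+1,j-1)} + \q^j\beta_{i-1}\boti g^{m-1}_{(i-1,j)}\Big)\boti\tk(f^m_{(i,j)}),
$$
which is precisely $\ok(g^m_{(i,j)})\boti g^m_{(i,j)}\boti\tk(g^m_{(i,j)})$ by the inductive definition of $g^m_{(i,j)}$. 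Once the formula is established, that $\Phi$ is a chain map is automatic from the general fact (recalled from \cite{IIVZ}) that $s_{m-1}\circ\Phi_{m-1}\circ d_m$ defines a lift of the identity; alternatively one checks $\bar d_m\circ\Phi_m=\Phi_{m-1}\circ d_m$ directly on generators using the closed formula and $\bar d_m(\ok(g)\boti g\boti\tk(g))=\ok(g)\boti(g\cdot\tk) - (\text{terms from the internal bar differential of }g) + (\ok\cdot g)\boti\tk$, where the internal terms vanish since every $g^m_{(i,j)}\in K_m$ lies in the span of relations.

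The main obstacle I expect is bookkeeping rather than conceptual: carefully tracking the signs $(-1)^m$ and the powers of $\q$ coming from $d_m$, and verifying cleanly that the two ``right-hand'' terms of $d_m$ contribute nothing after $s_{m-1}$ — this is where one genuinely uses that $\Phi_{m-1}d_{m-1}d_m=0$ and the contracting-homotopy identity $s_{m-2}\bar d_{m-1}+\bar d_{m-2}s_{m-3}=\I$, so that anything of the form $s_{m-1}(\text{element of }\Imm\,\bar d_{m-1}\text{-type})$ simplifies. Indexing conventions ($i$, $j$ modulo $2$ for $\alpha$'s and $\beta$'s, $g^{m-1}_{(i,j)}=0$ outside $0\le j\le m-1$) must be applied consistently; checking the boundary cases $j=0$ and $j=m$ separately is prudent.
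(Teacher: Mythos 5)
Your proposal is correct and follows essentially the same route as the paper: the chain-map property comes from the standard computation $\bar d_m\circ s_{m-1}=\I-s_{m-2}\circ\bar d_{m-1}$ together with $d_{m-1}\circ d_m=0$, and the closed formula is proved by induction using $\Phi_m=s_{m-1}\circ\Phi_{m-1}\circ d_m$, with the two right-hand terms of $d_m$ dying under $s_{m-1}$. For that last point the cleanest of your two suggested justifications is the one the paper implicitly uses: those terms begin with the idempotent $\ok(g^{m-1})$, so $s_{m-1}$ pushes that idempotent into a $\bar A_{\q}$-slot where it vanishes (equivalently, they lie in $\Imm s_{m-2}$ and $s_{m-1}\circ s_{m-2}=0$).
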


\begin{proof}
For $m=0$, it is easy to see that $d_{0}(e_{i}\otimes e_{i})=e_{i}
=\bar{d}_{0}\circ\Phi_{0}(e_{i}\otimes e_{i})$. Suppose that
$\bar{d}_{i}\circ\Phi_{i}=\Phi_{i-1}\circ d_{i}$ for
$1\leq i\leq m-1$. Then
$$
\begin{aligned}
&\bar{d}_{m}\circ\Phi_{m}\left(\ok(f^{m}_{(i,j)})\otimes
\tk(f^{m}_{(i,j)})\right)\\
=&\bar{d}_{m}\circ s_{m-1}\circ\Phi_{m-1}\circ d_{m}
\left(\ok(f^{m}_{(i,j)})\otimes\tk(f^{m}_{(i,j)})\right)\\
=&[\Phi_{m-1}\circ d_{m}-s_{m-2}\circ\bar{d}_{m-1}\circ\Phi_{m-1}\circ
d_{m}]\left(\ok(f^{m}_{(i,j)})\otimes\tk(f^{m}_{(i,j)})\right)\\
=&[\Phi_{m-1}\circ d_{m}-s_{m-2}\circ\Phi_{m-2}\circ d_{m-1}\circ
d_{m}]\left(\ok(f^{m}_{(i,j)})\otimes\tk(f^{m}_{(i,j)})\right)\\
=&\Phi_{m-1}\circ d_{m}\left(\ok(f^{m}_{(i,j)})\otimes\tk(f^{m}_{(i,j)})\right),
\end{aligned}
$$
for any $f^{m}_{(i,j)}\in F^{m}$. That is, $\Phi=(\Phi_{m})_{m\geq0}$
is a chain map.

For the morphism $\Phi=(\Phi_{m})_{m\geq0}$, one can check that
$\Phi_{1}\left(\ok(\alpha_{i})\otimes\tk(\alpha_{i})\right)
=s_{0}\circ\Phi_{0}\circ d_{1}\left(\ok(\alpha_{i})\otimes\tk(\alpha_{i})\right)
=\ok(\alpha_{i})\boti\alpha_{i}\boti\tk(\alpha_{i})$, and
$\Phi_{1}\left(\ok(\beta_{i})\otimes\tk(\beta_{i})\right)
=\ok(\beta_{i})\boti\beta_{i}\boti\tk(\beta_{i})$ for $i=1, 2$.
Now suppose that $\Phi_{m}\left(\ok(f^{m}_{(i,j)})\otimes\tk(f^{m}_{(i,j)})\right)
=\ok(g^{m}_{(i,j)})\boti g^{m}_{(i,j)}\boti\tk(g^{m}_{(i,j)})$.
Then
$$
\begin{aligned}
&\Phi_{m+1}\left(\ok(f^{m+1}_{(i,j)})\otimes\tk(f^{m+1}_{(i,j)})\right)\\
=&s_{m}\circ\Phi_{m}\circ d_{m+1}
\left(\ok(f^{m+1}_{(i,j)})\otimes\tk(f^{m+1}_{(i,j)})\right)\\
=&s_{m}\circ\Phi_{m}\Big(\alpha_{i}\otimes\tk(f^{m}_{(i+1,j-1)})
+(-1)^{m+1}\q^{m+1-j}\ok(f^{m}_{(i,j-1)})\otimes\alpha_{i+2j-m-2}\\
&\hspace{1cm}+\q^{j}\beta_{i-1}\otimes\tk(f^{m}_{(i-1,j)})
+(-1)^{m+1}\ok(f^{m}_{(i,j)})\otimes\beta_{i+2j-m-1}\Big)\\
=&s_{m}\Big(\alpha_{i}\boti g^{m}_{(i+1,j-1)}\boti\tk(g^{m}_{(i+1,j-1)})
+\q^{j}\beta_{i-1}\boti g^{m}_{(i-1,j)}\boti\tk(g^{m}_{(i-1,j)})\\
&\hspace{1cm}+(-1)^{m+1}\q^{m+1-j}\ok(g^{m}_{(i,j-1)})\boti g^{m}_{(i,j-1)}\boti
\alpha_{i+2j-m-2}\\
&\hspace{1cm}+(-1)^{m+1}\ok(g^{m}_{(i,j)})\boti g^{m}_{(i,j)}\boti\beta_{i+2j-m-1}\Big)\\
=&\ok(\alpha_{i})\boti\alpha_{i}\boti g^{m}_{(i+1,j-1)}\boti
\tk(g^{m}_{(i+1,j-1)})+\q^{j}\ok(\beta_{i-1})\boti\beta_{i-1}\boti
g^{m}_{(i-1,j)}\boti\tk(g^{m}_{(i-1,j)})\\
=&\ok(g^{m+1}_{(i,j)})\boti g^{m+1}_{(i,j)}\boti\tk(g^{m+1}_{(i,j)}).
\end{aligned}
$$
This completes the proof.
\end{proof}

Secondly, using similar methods, we construct another family of
morphisms $\Psi=(\Psi_{m})_{m\geq0}$ from
$\bar{{\mathbb{B}}}=(\bar{B}_{m}, \bar{d}_{m})$
to ${\mathbb{P}}=(P_{m},d_{m})$ as follows:
\begin{itemize}
\item[(1)] $\Psi_{0}: \bar{B}_{0}=A_{\q}\otimes_{E} A_{\q}
    \longrightarrow P_{0}=\bigoplus_{i=1}^{n+1}A_{\q}e_{i}\otimes
    e_{i}A_{\q}$ is the bimodule isomorphism
    $e_{i}\boti e_{i}$ $\mapsto (e_{i}\otimes e_{i})$;

\item[(2)] for $m\geq1$, the bimodule morphism $\Psi_{m}$ is defined inductively
     by the map $t_{m-1}\circ\Psi_{m-1}\circ\bar{d}_{m}$ acting on the
     free basis elements of $\bar{B}_{m}$.
\end{itemize}
Then, similar to the proof of Lemma \ref{lem6.2}, we have the following lemma.

\begin{lemma}\label{lem6.3}
The morphism $\Psi=(\Psi_{m})_{m\geq0}: \bar{{\mathbb{B}}}=
(\bar{B}_{m}, \bar{d}_{m})\rightarrow{\mathbb{P}}=(P_{m},d_{m})$
is a chain map.
\end{lemma}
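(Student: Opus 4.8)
The plan is to prove directly that $d_m\circ\Psi_m=\Psi_{m-1}\circ\bar d_m$ for every $m\geq 0$ by induction on $m$, mirroring the proof of Lemma~\ref{lem6.2}; the only essential inputs are the two weak self-homotopies already in hand. On the $\mathbb{P}$ side, Lemma~\ref{lem6.1} gives $\{t_i\}_{i\geq-1}$ with $d_0\circ t_{-1}=\I_{A_\q}$ and $t_{m-2}\circ d_{m-1}+d_m\circ t_{m-1}=\I_{P_{m-1}}$ for $m\geq1$; on the $\bar{\mathbb{B}}$ side, $\{s_i\}_{i\geq-1}$ satisfies the analogous identities and $s_{m+1}\circ s_m=0$. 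Since $\Psi_m$ is defined on the free $A_\q$-bimodule generators of $\bar B_m$ by the rule $\Psi_m=t_{m-1}\circ\Psi_{m-1}\circ\bar d_m$ and then extended bimodule-linearly, it is a well-defined morphism of $A_\q$-bimodules, and it suffices to check the chain-map identity on those generators.

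First I would record the base step: $\Psi_0(e_i\boti e_i)=e_i\otimes e_i$ gives $d_0\circ\Psi_0(e_i\boti e_i)=e_i=\bar d_0(e_i\boti e_i)$, i.e.\ $d_0\circ\Psi_0=\bar d_0$. Now assume $d_i\circ\Psi_i=\Psi_{i-1}\circ\bar d_i$ for all $1\leq i\leq m-1$. Then
\begin{align*}
d_m\circ\Psi_m
&=d_m\circ t_{m-1}\circ\Psi_{m-1}\circ\bar d_m\\
&=(\I_{P_{m-1}}-t_{m-2}\circ d_{m-1})\circ\Psi_{m-1}\circ\bar d_m\\
&=\Psi_{m-1}\circ\bar d_m-t_{m-2}\circ d_{m-1}\circ\Psi_{m-1}\circ\bar d_m\\
&=\Psi_{m-1}\circ\bar d_m-t_{m-2}\circ\Psi_{m-2}\circ\bar d_{m-1}\circ\bar d_m\\
&=\Psi_{m-1}\circ\bar d_m,
\end{align*}
where the second equality is the weak self-homotopy identity for $\mathbb{P}$, the fourth is the induction hypothesis $d_{m-1}\circ\Psi_{m-1}=\Psi_{m-2}\circ\bar d_{m-1}$ (for $m=1$ one uses instead $d_0\circ\Psi_0=\bar d_0$ together with $\I_{P_0}=t_{-1}\circ d_0+d_1\circ t_0$), and the last uses $\bar d_{m-1}\circ\bar d_m=0$. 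This closes the induction, so $\Psi=(\Psi_m)_{m\geq0}$ is a chain map lifting $\I_{A_\q}$.

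The computation is essentially formal once the two self-homotopies are available, so I expect no genuine obstacle beyond careful bookkeeping. The one point that deserves a sentence of care is that each $t_{m-1}$ is only a right $A_\q$-module map (and merely $\kk$-linear on the left), so $\Psi_m$ must be introduced by evaluating $t_{m-1}\circ\Psi_{m-1}\circ\bar d_m$ on the free bimodule generators of $\bar B_m$ and extending, rather than as a literal bimodule composite; after that, the displayed identity, being $\kk$-linear in its argument, propagates from generators to all of $\bar B_m$. Note also that, in contrast with Lemma~\ref{lem6.2}, we neither obtain nor need a closed formula for $\Psi_m$ on generators: the chain-map property proved above is exactly what Lemma~\ref{lem6.3} asserts and all that the subsequent construction of the BV operator and Gerstenhaber bracket will require.
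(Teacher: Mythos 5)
Your proof is correct and follows exactly the route the paper intends: the paper gives no separate argument for Lemma \ref{lem6.3}, saying only that it is ``similar to the proof of Lemma \ref{lem6.2}'', and your induction using the weak self-homotopy identity $d_m\circ t_{m-1}=\I_{P_{m-1}}-t_{m-2}\circ d_{m-1}$ together with $\bar d_{m-1}\circ\bar d_m=0$ is precisely the mirror of that computation. Your remark that $\Psi_m$ is defined on free bimodule generators (since $t_{m-1}$ is only a right-module map) is a correct and welcome point of care.
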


For the morphism $\Psi=(\Psi_{m})_{m\geq0}$, we can't get a unified formula.
But we can give a concrete description of the mapping $\Psi$ on each basis element
as following: if $m=1, 2$,
\begin{itemize}
\item[$\bullet$] $\Psi_{1}(\ok(a)\boti a\boti\tk(a))=\left\{\begin{array}{ll}
     \ok(a)\otimes\tk(a), &\quad \mbox{if } a=\alpha_{i}\mbox{ or }\beta_{i};\\
     \alpha_{i}\otimes\tk(a)+\ok(a)\otimes\beta_{i},
     &\quad \mbox{if } a=\alpha_{i}\beta_{i};\end{array}\right.$

\item[$\bullet$] $\Psi_{2}(\ok(a_{1})\boti a_{1}\boti a_{2}\boti\tk(a_{2}))\\
     =\left\{\begin{array}{llllllllll}
     \ok(f^{2}_{(i,2)})\otimes\tk(f^{2}_{(i,2)}), &\quad \mbox{if } a_{1}=\alpha_{i},\;
     a_{2}=\alpha_{i+1};\\
     0, &\quad \mbox{if } a_{1}=\alpha_{i},\; a_{2}=\beta_{i};\\
     \ok(f^{2}_{(i,2)})\otimes\beta_{i+1},
     &\quad \mbox{if } a_{1}=\alpha_{i},\; a_{2}=\alpha_{i+1}\beta_{i+1};\\
     \q^{-1}\ok(f^{2}_{(i+1,1)})\otimes\tk(f^{2}_{(i+1,1)}),
     &\quad \mbox{if } a_{1}=\beta_{i},\;
     a_{2}=\alpha_{i};\\
     \ok(f^{2}_{(i+1,0)})\otimes\tk(f^{2}_{(i+1,0)}), &\quad \mbox{if }
     a_{1}=\beta_{i},\;
     a_{2}=\beta_{i-1};\\
     -\q^{-1}\alpha_{i+1}\otimes\tk(f^{2}_{(i+2,0)})
     +\q^{-1}\ok(f^{2}_{(i+1,1)})\otimes\beta_{i},
     &\quad \mbox{if }a_{1}=\beta_{i},\; a_{2}=\alpha_{i}\beta_{i};\\
     \q^{-1}\alpha_{i}\otimes\tk(f^{2}_{(i+1,1)})
     -\q^{-1}\ok(f^{2}_{(i,2)})\otimes\beta_{i+1},
     &\quad \mbox{if }a_{1}=\alpha_{i}\beta_{i},\; a_{2}=\alpha_{i};\\
     \alpha_{i}\otimes\tk(f^{2}_{(i+1,0)}),
     &\quad \mbox{if }a_{1}=\alpha_{i}\beta_{i},\; a_{2}=\beta_{i-1};\\
     \q^{-1}\alpha_{i}\otimes\beta_{i},
     &\quad \mbox{if }a_{1}=\alpha_{i}\beta_{i},\; a_{2}=\alpha_{i}\beta_{i}.
     \end{array}\right.$
\end{itemize}
More general, for $Y_{m}:=\ok(a_{1})\boti a_{1}\boti a_{2}\boti\cdots\boti
a_{m}\boti\tk(a_{m})$, we denote $X_{m}:=a_{1}\boti a_{2}\boti$
$\cdots\boti a_{m}$, $m\geq 3$, we have
\begin{itemize}
\item[$\bullet$] if $X_{m}=\beta_{i}\boti\beta_{i+1}\boti\cdots\boti\beta_{m+i-1}$, then
     $$
     \Psi_{m}(Y_{m})=\ok(f^{m}_{(i+1, 0)})\otimes\tk(f^{m}_{(i+1, 0)});
     $$

\item[$\bullet$] if $X_{m}=\beta_{i}\boti\beta_{i-1}\boti\cdots\boti\beta_{i-j}\boti
    \alpha_{i-j}\boti\alpha_{i-j+1}\boti\cdots\boti\alpha_{m+i-2j-2}$, then
     $$
     \Psi_{m}(Y_{m})
     =\q^{-(j+1)(m-j-1)}\ok(f^{m}_{(i+1, m-j-1)})\otimes\tk(f^{m}_{(i+1, m-j-1)});
     $$

\item[$\bullet$] if $X_{m}=\beta_{i}\boti
     \beta_{i-1}\boti\cdots\boti\beta_{i-l}\boti\alpha_{i-l}\beta_{i-l}
     \boti\beta_{i-l-1}\boti\cdots\boti\beta_{i-l-j}\boti\alpha_{i-l-j}\boti
     \alpha_{i-l-j+1}\boti$ $\cdots\boti\alpha_{m+i-2l-2j-3}$, then

(1) $j>0$,
     $$
     \Psi_{m}(Y_{m})=(-1)^{l+1}\q^{-(l+1)-(l+j+2)(m-l-j-2)}\alpha_{i+1}\otimes
     \tk(f^{m}_{(i+2, m-l-j-2)});
     $$

(2) $j=0$,
     $$
     \begin{aligned}
     \Psi_{m}(Y_{m})=&\q^{-(l+1)-(l+j+2)(m-l-j-2)}\Big[(-1)^{l+1}\alpha_{i+1}\otimes
     \tk(f^{m}_{(i+2, m-l-j-2)})\\
     &\qquad\qquad+(-1)^{m-l-j}\ok(f^{m}_{(i+1, m-l-j-1)})\otimes
     \beta_{m+i-2l-2j-2}\Big];
     \end{aligned}
     $$

\item[$\bullet$] if $X_{m}=\beta_{i}\boti
     \beta_{i-1}\boti\cdots\boti\beta_{i-l}\boti\alpha_{i-l}\boti
     \cdots\boti\alpha_{i-l+j-1}\boti\alpha_{i-l+j}\beta_{i-l+j}\boti\alpha_{i-l+j}$
     $\boti\alpha_{i-l+j+1}$ $\boti\cdots\boti\alpha_{m+i-2l-3}$, then

(1) $j>0$,
     $$
     \Psi_{m}(Y_{m})
     =(-1)^{m-l-j}\q^{-(m-l-j-2)-(l+1)(m-l-1)}\ok(f^{m}_{(i+1, m-l-1)})
     \otimes\beta_{m+i-2l-2};
     $$

(2) $j=0$,
     $$
     \begin{aligned}
     \Psi_{m}(Y_{m})=&\q^{-(m-l-j-2)-(l+1)(m-l-1)}\Big[(-1)^{l+1}\alpha_{i+1}\otimes
     \tk(f^{m}_{(i+2, m-l-2)})\\
     &\qquad\qquad+(-1)^{m-l-j}\ok(f^{m}_{(i+1, m-l-1)})\otimes
     \beta_{m+i-2l-2}\Big];
     \end{aligned}
     $$

\item[$\bullet$] if $X_{m}=\beta_{i}\boti
     \beta_{i-1}\boti\cdots\boti\beta_{i-l}\boti\alpha_{i-l}\beta_{i-l}
     \boti\beta_{i-l-1}\boti\cdots\boti\beta_{i-l-j}\boti\alpha_{i-l-j}
     \boti\cdots\boti\alpha_{i-l-j+s-1}$ $\boti\alpha_{i-l-j+s}\beta_{i-l-j+s}
     \boti\alpha_{i-l-j+s}\boti\cdots\boti\alpha_{m+i-2l-2j-4}$, then
     $$
     \Psi_{m}(Y_{m})
     =(-1)^{m-j-s}\q^{-(l+1)-(j+l+2)(m-j-l-2)-(m-l-j-s-3)}
     \alpha_{i+1}\otimes\beta_{m+i-2j-2l-3};
     $$

\item[$\bullet$] if $X_{m}=\alpha_{i}\boti
     \alpha_{i+1}\boti\cdots\boti\alpha_{m+i-1}$, then
     $$
     \Psi_{m}(Y_{m})=\ok(f^{m}_{(i, m)})\otimes\tk(f^{m}_{(i, m)});
     $$

\item[$\bullet$] if $X_{m}=\alpha_{i}\boti\cdots\boti\alpha_{i+j}\boti\alpha_{i+j+1}
    \beta_{i+j+1}\boti\alpha_{i+j+1}\cdots\alpha_{m+i-2}$, then
     $$
     \Psi_{m}(Y_{m})=(-1)^{(m-j-2)}\q^{-(m-j-2)}\ok(f^{m}_{(i,m)})\otimes\beta_{m+i+1};
     $$

\item[$\bullet$] if $X_{m}=\alpha_{i}
     \beta_{i}\boti\alpha_{i}\boti\cdots\boti\alpha_{m+i-2}$, then
     $$
     \Psi_{m}(Y_{m})=\q^{-(m-1)}\alpha_{i}\otimes\tk(f^{m}_{(i+1, m-1)})
     +(-1)^{(m-1)}q^{-(m-1)}\ok(f^{m}_{(i, m)})\otimes\beta_{m+i+1};
     $$

\item[$\bullet$] if $X_{m}=\alpha_{i}
     \beta_{i}\boti\alpha_{i}\boti\cdots\boti\alpha_{i-l-1}\boti\alpha_{i-l}
     \beta_{i-l}\boti\alpha_{i-l}\boti\cdots\alpha_{m+i-2l-3}$, then
     $$
     \Psi_{m}(Y_{m})=(-1)^{(m-l-2)}\q^{-(m-2)-(m-l-2)-1}\alpha_{i}\otimes\beta_{m+i};
     $$

\item[$\bullet$] if $X_{m}=\alpha_{i}
     \beta_{i}\boti\beta_{i-1}\boti\cdots\boti\beta_{i-l-1}\boti
     \alpha_{i-l-1}\boti\cdots\boti\alpha_{i-l-j}\boti\alpha_{i-l-j+1}
     \beta_{i-l-j+l}\boti\alpha_{i-l-j+1}$ $\boti\cdots\boti\alpha_{m+i-2l-2j-3}$,
     then
     $$
     \Psi_{m}(Y_{m})
     =(-1)^{(m-l-j-3)}\q^{-(m-l-3)-(m-l-j-3)-(l+1)-(m-l-3)(l+1)-1}\alpha_{i}\otimes
     \beta_{m+i};
     $$

\item[$\bullet$] if $X_{m}=\alpha_{i}
     \beta_{i}\boti\beta_{i-1}\boti\cdots\boti\beta_{i-l-1}\boti
     \alpha_{i-l-1}\boti\cdots\boti\alpha_{m+i-2l-4}$, then
     $$
     \Psi_{m}(Y_{m})=\q^{-(m-l-2)(l+2)}\alpha_{i}\otimes\tk(f^{m}_{(i+1,m-l-2)}).
     $$
\end{itemize}
For the other case, $\Psi_{m}(\ok(a_{1})\boti a_{1}\boti\cdots
\boti a_{m}\boti\tk(a_{m}))=0$.

Now, let us consider the BV operator on Hochschild cohomology
ring $HH^{\ast}(A_{\q})$. Here we consider it in different cases,
that is, when $A_{\q}$ is a symmetric algebra and $A_{\q}$ is a
Frobenius algebra with semisimple Nakayama automorphism.


\subsection{Symmetric case}\label{subsec1}
If $\q=-1$, the algebra $A_{\q}$ is a symmetric algebra.
In this case, we apply Tradler's construction to the zigzag
algebra $A_{\q}$, and get the BV algebraic structure of Hochschild cohomology
ring of $A_{\q}$. Let us review Tradler's construction. Let $\Lambda$ be an
associative $\kk$-algebra. It is well-known that
there is a Connes' $\mathfrak{B}$-operator
$\mathfrak{B}: HH_{m}(\Lambda)\rightarrow HH_{m+1}(\Lambda)$
in the Hochschild homology of $\Lambda$ (see \cite{Lo}).
If $\Lambda$ is symmetric, that is, there exists a symmetric
associative non-degenerate bilinear form $\langle\ \;,\;\ \rangle:
\Lambda\times\Lambda\rightarrow\kk$. This bilinear form induces a
duality between the Hochschild cohomology and the Hochschild homology
of $\Lambda$. Via this duality, for $m\geq1$, there is an
operator $\Delta: HH^{m}(\Lambda)\rightarrow HH^{m-1}(\Lambda)$,
which corresponds to the Connes's $\mathfrak{B}$-operator on the
Hochschild homology. Tradler has given the following theorem.

\begin{theorem}(\cite{T}) \label{thm7.1}
Let $\Lambda$ be a symmetric $\kk$-algebra. There exists a
symmetric associative non-degenerate bilinear form
$\langle\ \;,\;\ \rangle: \Lambda\times\Lambda\rightarrow\kk$.
For any $f\in\Hom_{\kk}(\Lambda^{\otimes m}, \Lambda)$, define
$\Delta(f)\in\Hom_{\kk}(\Lambda^{\otimes(m-1)}, \Lambda)$ by
$$
\begin{aligned}
&\langle\Delta(f)(a_{1}\otimes\cdots\otimes a_{m-1}),\; a_{m}\rangle\\
=&\sum_{i=1}^{m}(-1)^{i(m-1)}\langle f(a_{i}\otimes\cdots\otimes a_{m-1}
\otimes a_{m}\otimes a_{1}\otimes\cdots\otimes a_{i-1},\;
1_{\Lambda}\rangle,
\end{aligned}
$$
where $a_{i}\in\Lambda$, $1\leq i\leq m$. Then $\Delta$ induces a
differential $\Delta: HH^{m}(\Lambda)\rightarrow HH^{m-1}(\Lambda)$.
And $(HH^{\ast}(\Lambda),\; \sqcup,\; [\ \;,\;\ ],\; \Delta)$ is
a BV algebra.
\end{theorem}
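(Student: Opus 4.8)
The plan is to deduce everything from the duality between Hochschild homology and cohomology induced by the form, together with the standard properties of Connes' $\mathfrak{B}$-operator. First I would set up the duality: since $\langle\ ,\ \rangle$ is non-degenerate, the assignment
$$
\Theta:\ \Hom_{\kk}(\Lambda^{\otimes m},\Lambda)\longrightarrow\Hom_{\kk}(\Lambda^{\otimes(m+1)},\kk),\qquad \Theta(f)(a_{0}\otimes a_{1}\otimes\cdots\otimes a_{m})=\langle f(a_{1}\otimes\cdots\otimes a_{m}),\,a_{0}\rangle,
$$
is an isomorphism of $\kk$-vector spaces, with $\Hom_{\kk}(\Lambda^{\otimes m},\Lambda)\cong\Hom_{A_{\q}^{e}}(B_{m},\Lambda)$ the cochains computing $HH^{\ast}(\Lambda)$ and $\Hom_{\kk}(\Lambda^{\otimes(m+1)},\kk)=D(\Lambda^{\otimes(m+1)})$ the $\kk$-dual of the chains computing $HH_{\ast}(\Lambda)$, where $D=\Hom_{\kk}(-,\kk)$. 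Using associativity and symmetry of $\langle\ ,\ \rangle$ one checks that $\Theta$ intertwines the Hochschild coboundary $\delta^{m}$ with the $\kk$-dual of the Hochschild boundary $b$ on the normalized Hochschild chain complex; hence $\Theta$ passes to isomorphisms $HH^{m}(\Lambda)\cong D\,HH_{m}(\Lambda)$. This is where the symmetric (not merely Frobenius) hypothesis is used, and one must work with the normalized complex so that the slots occupied by $1_{\Lambda}$ in the formulas behave correctly.

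Next I would invoke Connes' operator. On the normalized mixed complex $(\bar C_{\ast}(\Lambda),b,B)$ one has $b^{2}=0$, $B^{2}=0$ and $bB+Bb=0$, so $B$ induces $\mathfrak{B}:HH_{m}(\Lambda)\to HH_{m+1}(\Lambda)$. Transporting $B$ through $\Theta$ — that is, setting $\Delta:=\Theta^{-1}\circ D(B)\circ\Theta$ at the cochain level — and unwinding the explicit cyclic formula for $B$, one obtains precisely the formula for $\Delta(f)$ in the statement; the sign $(-1)^{i(m-1)}$ is exactly the Koszul sign produced by rotating $a_{i}\otimes\cdots\otimes a_{m-1}\otimes a_{m}\otimes a_{1}\otimes\cdots\otimes a_{i-1}$ back into place and dualizing, and the slot $1_{\Lambda}$ records the fact that $B$ prepends $\ok(a_{0})$ before cyclically rotating. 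From $B^{2}=0$ one gets $\Delta\circ\Delta=0$, and from $bB+Bb=0$ one gets that $\Delta$ anticommutes with $\delta$ at the chain level, hence is well defined on $HH^{\ast}(\Lambda)$ and of degree $-1$.

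The substantive part is the BV identity
$$
[a,\,b]=-(-1)^{(|a|-1)|b|}\Big(\Delta(a\sqcup b)-\Delta(a)\sqcup b-(-1)^{|a|}a\sqcup\Delta(b)\Big),
$$
which I would establish at the cochain level modulo coboundaries. Writing out $\Delta(f\sqcup g)$ via its defining cyclic sum, one separates the terms according to whether the distinguished rotated block lies entirely inside the $f$-factor, entirely inside the $g$-factor, or straddles the junction between them. Using associativity of $\langle\ ,\ \rangle$ to reabsorb the trailing block of arguments, the first family reassembles into $\Delta(f)\sqcup g$ and the second into $(-1)^{|f|}f\sqcup\Delta(g)$; the straddling family, after one further use of the cyclic symmetry of the form to move arguments past the $1_{\Lambda}$-slot, reassembles into the brace terms $f\,\widehat{\circ}_{i}\,g$ and $g\,\widehat{\circ}_{i}\,f$ that make up $[f,g]$, up to an explicit null-homotopy supported on the cochains where $1_{\Lambda}$ becomes adjacent to a product. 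On cohomology the homotopy term dies, giving the identity. I expect this sign-and-combinatorics bookkeeping — in particular pinning down the homotopy that absorbs the boundary terms — to be the main obstacle; it is the technical core of Tradler's argument. Granting it, the conclusion is immediate: $(HH^{\ast}(\Lambda),\sqcup,[\ \,,\,\ ])$ is a Gerstenhaber algebra by \cite{Ger}, $\Delta$ has degree $-1$ with $\Delta\circ\Delta=0$, and the displayed identity holds, so $(HH^{\ast}(\Lambda),\sqcup,[\ \,,\,\ ],\Delta)$ is a Batalin-Vilkovisky algebra.
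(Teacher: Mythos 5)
This theorem is quoted by the paper from Tradler's article \cite{T}; the paper supplies no proof of its own, only the surrounding narrative that the symmetrizing form induces a duality $HH^{m}(\Lambda)\cong D\,HH_{m}(\Lambda)$ under which $\Delta$ corresponds to Connes' $\mathfrak{B}$-operator. Your outline follows exactly that route, so it is consistent with the paper's framing and with the standard proof: the map $\Theta$ you define is the usual chain-level realization of the bimodule isomorphism $\Lambda\cong D(\Lambda)$, it does intertwine $\delta$ with $D(b)$ by associativity and symmetry of the form, and transporting the normalized Connes operator through it yields precisely the displayed formula for $\Delta(f)$, with $\Delta\circ\Delta=0$ coming from $\mathfrak{B}^{2}=0$. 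Two remarks. First, a cosmetic slip: you write $\Hom_{A_{\q}^{e}}(B_{m},\Lambda)$ where you mean $\Hom_{\Lambda^{e}}(B_{m},\Lambda)$; the theorem concerns a general symmetric $\Lambda$. Second, and more substantively, the entire content of the theorem is the BV identity relating $\Delta$, $\sqcup$ and $[\ \,,\ ]$, and your treatment of it is a plausible sketch (split the cyclic sum for $\Delta(f\sqcup g)$ into terms internal to $f$, internal to $g$, and straddling terms, and exhibit a null-homotopy for the defect) rather than a proof; the identification of the straddling terms with the brace operations $f\,\widehat{\circ}_{i}\,g$ and $g\,\widehat{\circ}_{i}\,f$ up to an explicit homotopy is exactly where Tradler's argument does its work, and ``granting it'' concedes the theorem. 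Since the paper itself defers entirely to \cite{T}, this is an acceptable level of detail for present purposes, but be aware that you have reproduced the scaffolding of the proof, not its core computation.
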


Recall that $\B=\{e_{i},\, \alpha_{i},\, \beta_{i},\,
\alpha_{i}\beta_{i}\mid i=1, 2\}$ is a $\kk$-basis of $A_{-1}$.
It is well-known that the zigzag algebra $A_{-1}$ is symmetric
with respect to the symmetrizing form
$$
\langle a,\; b\rangle=\left\{\begin{array}{lll}
1,  &\qquad\mbox{ if } b=a^{\ast};\\
0,  &\qquad\mbox{ otherwise},
\end{array}\right.
$$
for any $a, b\in A_{-1}$, where
$$
\begin{tabular*}{10cm}{@{\extracolsep{\fill}}l|llllllllllllr}
$a$ & \; $e_{1}$ & \; $e_{2}$ & $\alpha_{1}$ & $\alpha_{2}$ & $\beta_{1}$
& $\beta_{2}$ & $\alpha_{1}\beta_{1}$ & $\alpha_{2}\beta_{2}$\\
\hline $a^{\ast}$ & $\alpha_{1}\beta_{1}$ & $\alpha_{2}\beta_{2}$
& $\beta_{1}$ & $\beta_{2}$ & $\alpha_{1}$ & $\alpha_{2}$
& \; $e_{1}$ & \; $e_{2}$
\end{tabular*}
$$
Thus, there exists a BV operator $\Delta$
such that $(HH^{\ast}(A_{-1}),\; \sqcup,\; [\ \;,\;\ ],\; \Delta)$
is a BV algebra. Now we use Tradler's construction
to give the BV operator $\Delta$ on
$$
HH^{\ast}(A_{-1})\cong\left(\kk[z_{1}, z_{2}]/(z_{1}^{2},
z_{1}z_{2}, z_{2}^{2})\times_{\kk}\w(u_{1}, u_{2}, u_{3},
u_{4})\right)[w_{0}, w_{1}, w_{2}]/I.
$$
Thanks to the formulas $[f\sqcup g,\; h]=[f,\; h]\sqcup g+
(-1)^{|f|(|h|-1)}f\sqcup[h,\; g]$ and $[f,\, g]=
-(-1)^{(|f|-1)|g|}(\Delta(f\sqcup g)-\Delta(f)\sqcup g
-(-1)^{|f|}f\sqcup\Delta(g))$, we have
$$
\begin{aligned}
\Delta(f\sqcup g\sqcup h)
&=\Delta(f\sqcup g)\sqcup h
+(-1)^{|g||h|}\Delta(f\sqcup h)\sqcup g
+(-1)^{|f|+|g|+|h|}f\sqcup\Delta(g\sqcup h)\\
&\ \ -\Delta(f)\sqcup g\sqcup h
-(-1)^{|f||g|}f\sqcup h\sqcup\Delta(g)
-(-1)^{|f|+|g|+|h|}f\sqcup\Delta(h)\sqcup g.
\end{aligned}
$$
This means that to determine operator $\Delta$,
we only need to calculate $\Delta(a)$ and $\Delta(a\sqcup b)$
for all the generators $a, b$ of $HH^{\ast}(A_{-1})$.
Moreover, using the comparison morphisms $\Psi$ and $\Phi$,
we compute $\Delta(f)$ by formula
$$
\Delta(f)=\Delta(f\circ\Psi_{m})\Phi_{m-1},
$$
for any $f\in HH^{m}(A_{-1})$, where, we equate the elements
in $HH^{\ast}(A_{-1})$ with their representatives for convenience.
Note that the formula in Theorem \ref{thm7.1} is also hold for the
complex induced by the reduced bar resolution, we have
$$
\begin{aligned}
&\Delta(f)(a_{1}\boti\cdots\boti a_{m-1})\\
=&\sum_{b\in\B_{0}}\sum_{i=1}^{m}(-1)^{i(m-1)}\langle f(a_{i}\boti\cdots\boti a_{m-1}
\boti b \boti a_{1}\boti\cdots\boti a_{i-1},\; 1\rangle b^{\ast},
\end{aligned}
$$
for any $f\in HH^{m}(A_{-1})$, $b, a_{1},\cdots,a_{m-1}\in A_{-1}$,
where $\B_{0}=\{\alpha_{i},\, \beta_{i},\, \alpha_{i}\beta_{i}\mid i=1, 2\}$
and $1=e_{1}+e_{2}$.

Next, we will discuss $\Delta(f)$ for different degree of $f$.
First, $\Delta(f)=0$ for $f\in\{z_{1}, z_{2}\}$, since $\Delta$
is degree $-1$. Second, if $f$ is degree 1, that is,
$f\in\{u_{i}\mid 1\leq i\leq 4\}\cup\{z_{i}u_{j}\mid i=1, 2, 1\leq j\leq 4\}$,
we have the following lemma.

\begin{lemma} \label{lem6.2}
For the BV operator $\Delta$ on $HH^{\ast}(A_{-1})$, we have $\Delta(f)=0$
for any $f\in HH^{\ast}(A_{-1})$ with degree 1, except
$\Delta(u_{2})=\Delta(u_{3})=1$.
\end{lemma}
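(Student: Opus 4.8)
The plan is to combine $\kk$-linearity of $\Delta$ with the transfer formula $\Delta(f)=\Delta(f\circ\Psi_{1})\circ\Phi_{0}$ in order to reduce the statement to four short computations in the reduced bar complex. First I would observe that the ``degree $1$'' products $z_{i}u_{j}$ already vanish as classes in $HH^{\ast}(A_{-1})$: in the presentation of $HH^{\ast}(A_{-1})$ the $z_{1},z_{2}$ lie in the factor $\kk[z_{1},z_{2}]/(z_{1}^{2},z_{1}z_{2},z_{2}^{2})$ and the $u_{j}$ in the factor $\w(u_{1},u_{2},u_{3},u_{4})$ of the fibre product, so each cross term $z_{i}\sqcup u_{j}$ is $0$ (one can also see this directly from Proposition \ref{pro5.2}, since $\alpha_{i}\beta_{i}\cdot\alpha_{k}=\alpha_{i}\beta_{i}\cdot\beta_{k}=0$ in $A_{-1}$). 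Hence $\Delta(z_{i}u_{j})=\Delta(0)=0$, and because $HH^{1}(A_{-1})$ has $\kk$-basis $\{u_{1},u_{2},u_{3},u_{4}\}$ while $\Delta$ lowers degree by one, it suffices to compute $\Delta(u_{k})\in HH^{0}(A_{-1})=\kk\,\widehat{1}\oplus\kk\,\widehat{z}_{1}\oplus\kk\,\widehat{z}_{2}$ for $k=1,2,3,4$.

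Next I would specialise Tradler's formula (Theorem \ref{thm7.1}), in its reduced-bar version, to a degree-$1$ cochain: for $f\in HH^{1}(A_{-1})$ one gets that $\Delta(f\circ\Psi_{1})$ is the element $\sum_{b\in\B_{0}}\langle(f\circ\Psi_{1})(\ok(b)\boti b\boti\tk(b)),\,1\rangle\,b^{\ast}$ of $A_{-1}$, and composing with $\Phi_{0}\colon e_{i}\otimes e_{i}\mapsto e_{i}\boti e_{i}$ multiplies this element by $e_{i}$ on both sides. From the dual table $\alpha_{i}^{\ast}=\beta_{i}$, $\beta_{i}^{\ast}=\alpha_{i}$, $(\alpha_{i}\beta_{i})^{\ast}=e_{i}$ of the symmetrizing form one has $e_{i}b^{\ast}e_{i}=0$ for every $b\in\B_{0}$ except $b=\alpha_{i}\beta_{i}$, where $e_{i}(\alpha_{i}\beta_{i})^{\ast}e_{i}=e_{i}$. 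Writing $f=\varphi_{1}(u_{k})$ for the cochain attached to $u_{k}$ by Lemma \ref{lem4.1}, this yields
$$
\Delta(u_{k})=\sum_{i=1}^{2}\big\langle\,\varphi_{1}(u_{k})\big(\Psi_{1}(e_{i}\boti\alpha_{i}\beta_{i}\boti e_{i})\big),\;1\,\big\rangle\,(e_{i},f^{0}_{(i,0)}),
$$
so only the value of $\Psi_{1}$ on the two loops $\alpha_{i}\beta_{i}$ matters.

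Then I would insert $\Psi_{1}(e_{i}\boti\alpha_{i}\beta_{i}\boti e_{i})=\alpha_{i}\otimes\tk(\alpha_{i}\beta_{i})+\ok(\alpha_{i}\beta_{i})\otimes\beta_{i}$, reading off from $\Psi_{1}=t_{0}\circ\Psi_{0}\circ\bar{d}_{1}$ that the first summand sits in the $f^{1}_{(i+1,0)}$-direct summand of $P_{1}$ and the second in the $f^{1}_{(i,1)}$-direct summand. Applying $\varphi_{1}(u_{k})$ (which evaluates the generator of the matching summand and keeps the bimodule factors) and using only the relations $\alpha_{i}\alpha_{i+1}=0$, $\beta_{i}\beta_{i+1}=0$, $\alpha_{i}\beta_{i}=\beta_{i+1}\alpha_{i+1}$, one obtains: for $u_{1}$ the output is $\alpha_{i}\alpha_{i+1}\cdot e_{i}=0$; for $u_{2}$ it is $\alpha_{i}\beta_{i}$; for $u_{3}$ it is $\alpha_{i}\beta_{i}$; and for $u_{4}$ it is $\beta_{i+1}\beta_{i}\cdot e_{i}=0$. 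Since $\langle\alpha_{i}\beta_{i},1\rangle=\langle\alpha_{i}\beta_{i},(\alpha_{i}\beta_{i})^{\ast}\rangle=1$ and $\langle0,1\rangle=0$, this gives $\Delta(u_{1})=\Delta(u_{4})=0$ and $\Delta(u_{2})=\Delta(u_{3})=(e_{1},f^{0}_{(1,0)})+(e_{2},f^{0}_{(2,0)})=1$; the lemma then follows by $\kk$-linearity.

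The main obstacle is bookkeeping rather than anything conceptual. One must pin down, for the loops $\alpha_{i}\beta_{i}$, exactly which direct summand of $P_{1}$ each of the two terms of $\Psi_{1}(e_{i}\boti\alpha_{i}\beta_{i}\boti e_{i})$ lands in, since the $f^{1}_{(i+1,0)}$-- and $f^{1}_{(i,1)}$--summands have the same underlying space $A_{-1}e_{i'}\otimes e_{i''}A_{-1}$; an incorrect assignment would turn one of the zero outputs into $\pm1$. I would resolve this by computing $\Psi_{1}$ on the loops directly from the explicit formulas for $\Psi_{0}$ and $t_{0}$ given in this section, where the summand of each term is named. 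The remaining work is to thread the symmetrizing-form pairing through the small multiplication table of $A_{-1}$, which is routine.
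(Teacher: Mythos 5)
Your proposal is correct and follows essentially the same route as the paper: transfer $\Delta$ through the comparison morphisms via $\Delta(f)=\Delta(f\circ\Psi_{1})\circ\Phi_{0}$, apply Tradler's formula on the reduced bar complex, and evaluate $\Psi_{1}$ on the generators of $\bar{B}_{1}$ using the explicit weak self-homotopy $t_{0}$; your summand assignments for $\Psi_{1}(e_{i}\boti\alpha_{i}\beta_{i}\boti e_{i})$ and the resulting values $\Delta(u_{1})=\Delta(u_{4})=0$, $\Delta(u_{2})=\Delta(u_{3})=1$ all agree with the paper. The only (cosmetic) difference is that you discard the terms indexed by $b\in\{\alpha_{i},\beta_{i}\}$ up front via $e_{i}b^{\ast}e_{i}=0$, whereas the paper writes out all three families of terms and observes that their pairings with $1$ vanish.
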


\begin{proof}
Note that
$$
\begin{aligned}
\Delta(f)(e_{i}\otimes e_{i})
&=\Delta(f\circ\Psi_{1})\circ\Phi_{0}(e_{i}\otimes e_{i})\\
&=\sum_{a\in\B_{0}}\langle f\circ\Psi_{1}(\ok(a)\boti a\boti\tk(a)),\;
1\rangle a^{\ast}\\
&=\langle f(\ok(\alpha_{i})\otimes\tk(\alpha_{i})),\; 1\rangle\beta_{i}
+\langle f(\ok(\beta_{i})\otimes\tk(\beta_{i})),\; 1\rangle\alpha_{i}\\
&+\langle f(\alpha_{i}\otimes\tk(\beta_{i})+\ok(\alpha_{i})
\otimes\beta_{i}),\;  1\rangle e_{i}
\end{aligned}
$$
for any $f\in HH^{\ast}(A_{-1})$, we get $\Delta(u_{1})(e_{i}\otimes
e_{i})=\Delta(u_{4})(e_{i}\otimes e_{i})=0$, $\Delta(u_{2})
(e_{i}\otimes e_{i})=\Delta(u_{3})(e_{i}\otimes e_{i})=e_{i}$.
Thus, $\Delta(u_{1})=\Delta(u_{4})=0$ and
$\Delta(u_{2})=\Delta(u_{3})=1$. Moreover, we have
$\Delta(z_{i}u_{j})=0$ since $z_{i}u_{j}=0$ for all $i=1, 2$, $1\leq j\leq 4$.
\end{proof}

Third, whenever $f\in HH^{\ast}(A_{-1})$ is degree 2, that is,
$f\in\{w_{i}\mid i=0, 1, 2\}\cup\{z_{i}w_{j}\mid i=1, 2, j=0, 1, 2\}
\cup\{u_{i}u_{j}\mid 1\leq i, j\leq 4\}$, we have the following lemma.

\begin{lemma} \label{lem6.3}
For the BV operator $\Delta$ on $HH^{\ast}(A_{-1})$, we have $\Delta(f)=0$
for any $f\in HH^{\ast}(A_{-1})$ with degree 2, except
\begin{alignat*}{3}
&\Delta(z_{1}w_{0})=2u_{1},&\qquad &\Delta(z_{1}w_{1})=u_{2}-u_{3},
&\qquad &\Delta(z_{1}w_{2})=2u_{4},\\
&\Delta(u_{1}u_{2})=-2u_{1}, & &\Delta(u_{1}u_{4})=u_{2}-u_{3}, &
&\Delta(u_{2}u_{3})=u_{3}-u_{2},\\
&\Delta(u_{3}u_{4})=2u_{4}.& & & &
\end{alignat*}
\end{lemma}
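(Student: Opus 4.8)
The plan is to compute $\Delta$ on each degree‑$2$ element via the master formula $\Delta(f)=\Delta(f\circ\Psi_{2})\circ\Phi_{1}$ of this section together with Tradler's formula (Theorem~\ref{thm7.1}) transported to the reduced bar resolution. First I would cut the work down. By Proposition~\ref{pro3.4} the degree‑$2$ part of $HH^{\ast}(A_{-1})$ is $6$‑dimensional, and by Theorem~\ref{thm5.4} it is spanned by $\widehat{w}_{0},\widehat{w}_{1},\widehat{w}_{2}$ and $\widehat{z}_{1}\widehat{w}_{0},\widehat{z}_{1}\widehat{w}_{1},\widehat{z}_{1}\widehat{w}_{2}$. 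The products $z_{2}w_{j}$, $z_{i}^{2}$, $z_{1}z_{2}$, $u_{i}u_{i}$, $u_{1}u_{3}$ and $u_{2}u_{4}$ all vanish, so $\Delta$ of them is $0$ trivially, while the relations of Theorem~\ref{thm5.4} with $\q=-1$ give $u_{1}u_{2}=-z_{1}w_{0}$, $u_{1}u_{4}=z_{1}w_{1}$, $u_{2}u_{3}=-u_{1}u_{4}=-z_{1}w_{1}$ and $u_{3}u_{4}=z_{1}w_{2}$. Hence it suffices to prove $\Delta(\widehat{w}_{j})=0$ and to compute $\Delta(\widehat{z}_{1}\widehat{w}_{j})$ for $j=0,1,2$; the claimed values of $\Delta$ on the $u_{i}u_{j}$ are then forced and the table is self‑consistent.

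For the core computation, for each representative $f$ I would first write it as an explicit cocycle in $L^{2}=\kk(\B /\!\!/ F^{2})$: one has $\widehat{w}_{j}=(e_{1},f^{2}_{(1,j)})+(-1)^{j}(e_{2},f^{2}_{(2,j)})$, and by Proposition~\ref{pro5.2}, $\widehat{z}_{1}\widehat{w}_{j}=(\alpha_{1}\beta_{1},f^{2}_{(1,j)})+(-1)^{j+1}(\alpha_{2}\beta_{2},f^{2}_{(2,j)})$. Via $\varphi_{2}$ these become $(A_{-1})^{e}$‑maps $P_{2}\to A_{-1}$, which I precompose with $\Psi_{2}$ using the explicit description of $\Psi_{2}$ on the basis of $\bar{B}_{2}$ given in this section. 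Since $f$ pairs nontrivially only with the summand indexed by $f^{2}_{(i,j)}$ for the single relevant $j$, only a handful of basis elements $\ok(a_{1})\boti a_{1}\boti a_{2}\boti\tk(a_{2})$ contribute, and one gets $g:=f\circ\Psi_{2}\in\Hom_{\kk}(A_{-1}^{\otimes 2},A_{-1})$ explicitly on all basis pairs.

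Next I would apply Tradler's formula, which for $m=2$ reads $\Delta(g)(a)=\sum_{b\in\B_{0}}\bigl(\langle g(b\boti a),\,1\rangle-\langle g(a\boti b),\,1\rangle\bigr)\,b^{\ast}$, where by the symmetrizing form $\langle x,\,1\rangle$ is simply the sum of the coefficients of $x$ along $\alpha_{1}\beta_{1}$ and $\alpha_{2}\beta_{2}$. Evaluating at $a\in\{\alpha_{1},\alpha_{2},\beta_{1},\beta_{2}\}$ and postcomposing with $\Phi_{1}$ (which by Lemma~\ref{lem6.2} sends the free generator at $f^{1}_{(i,1)}$ to $\ok(\alpha_{i})\boti\alpha_{i}\boti\tk(\alpha_{i})$ and the one at $f^{1}_{(i,0)}$ to $\ok(\beta_{i-1})\boti\beta_{i-1}\boti\tk(\beta_{i-1})$) expresses $\Delta(f)$ as a cocycle in $L^{1}=\kk(\B /\!\!/ F^{1})$. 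Reading this modulo $\Imm\sigma^{1}$ against the $HH^{1}$‑basis $\widehat{u}_{1}=-(\alpha_{1},f^{1}_{(1,0)})+(\alpha_{2},f^{1}_{(2,0)})$, $\widehat{u}_{2}=(\beta_{1},f^{1}_{(2,0)})+(\beta_{2},f^{1}_{(1,0)})$, $\widehat{u}_{3}=(\alpha_{1},f^{1}_{(1,1)})+(\alpha_{2},f^{1}_{(2,1)})$, $\widehat{u}_{4}=(\beta_{1},f^{1}_{(2,1)})-(\beta_{2},f^{1}_{(1,1)})$ yields $\Delta(\widehat{w}_{j})=0$ for all $j$ together with $\Delta(\widehat{z}_{1}\widehat{w}_{0})=2u_{1}$, $\Delta(\widehat{z}_{1}\widehat{w}_{1})=u_{2}-u_{3}$, $\Delta(\widehat{z}_{1}\widehat{w}_{2})=2u_{4}$.

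The main obstacle is purely the volume of bookkeeping: one must run $\Psi_{2}$ over every $E$‑compatible pair $a_{1}\boti a_{2}$, keeping track of the signs $(-1)^{m}$ and the powers of $\q=-1$ through both the formula for $\Psi_{2}$ and Tradler's alternating sum, and then correctly identify the resulting $L^{1}$‑cocycle in terms of the $u_{i}$. There is no conceptual difficulty — the support of $g=f\circ\Psi_{2}$ is tiny and $\langle-,1\rangle$ only detects the $\alpha_{i}\beta_{i}$‑coefficients — but the arithmetic is lengthy; as sanity checks I would verify $\Delta\circ\Delta=0$ on these classes (for instance $\Delta^{2}(\widehat{z}_{1}\widehat{w}_{0})=2\Delta(u_{1})=0$ by Lemma~\ref{lem6.2}) and confirm that $\Delta(u_{1}u_{2})=-\Delta(z_{1}w_{0})$, $\Delta(u_{1}u_{4})=\Delta(z_{1}w_{1})=-\Delta(u_{2}u_{3})$ and $\Delta(u_{3}u_{4})=\Delta(z_{1}w_{2})$ are consistent with the ring relations of Theorem~\ref{thm5.4}.
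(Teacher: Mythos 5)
Your proposal is correct and follows the same route as the paper: compute $\Delta(f)=\Delta(f\circ\Psi_{2})\circ\Phi_{1}$ via Tradler's formula evaluated on the generators $\ok(\alpha_{i})\otimes\tk(\alpha_{i})$ and $\ok(\beta_{i})\otimes\tk(\beta_{i})$ of $P_{1}$, with $\langle-,\,1\rangle$ extracting the $\alpha_{i}\beta_{i}$-coefficients. The only difference is organizational: you run the direct computation only for $\widehat{w}_{j}$ and $\widehat{z}_{1}\widehat{w}_{j}$ and deduce $\Delta(u_{i}u_{j})$ from the cup-product relations $u_{1}u_{2}=-z_{1}w_{0}$, $u_{1}u_{4}=z_{1}w_{1}=-u_{2}u_{3}$, $u_{3}u_{4}=z_{1}w_{2}$ of Theorem~\ref{thm5.4} specialized to $\q=-1$, whereas the paper computes each product separately; this shortcut is legitimate because $\Delta$ is defined on cohomology classes, and it roughly halves the bookkeeping while making explicit the consistency of the table that the paper leaves implicit.
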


\begin{proof}
Using the formula above, for any $f\in HH^{2}(A_{-1})$, we get
$$
\begin{aligned}
&\Delta(f)(\ok(\alpha_{i})\otimes\tk(\alpha_{i}))\\
=&\Delta(f\circ\Psi_{2})(\ok(\alpha_{i})\boti\alpha_{i}
\boti\tk(\alpha_{i}))\\
=&\sum_{a\in\B_{0}}\langle f\circ\Psi_{2}(\ok(a)\boti a\boti\alpha_{i}
\boti\tk(\alpha_{i})-\ok(\alpha_{i})\boti\alpha_{i}
\boti a\boti\tk(a)),\; 1\rangle a^{\ast}\\
=&\langle f(\ok(f^{2}_{(i+1,2)})\otimes\tk(f^{2}_{(i+1,2)})),\;
1\rangle\beta_{i+1}-\langle f(\ok(f^{2}_{(i+1,1)})\otimes
\tk(f^{2}_{(i+1,1)})),\; 1\rangle\alpha_{i}\\
&-\langle f(\alpha_{i}\otimes
\tk(f^{2}_{(i+1,1)})),\; 1\rangle e_{i}+\langle f(\ok(f^{2}_{(i,2)})
\otimes\beta_{i+1}),\; 1\rangle e_{i}\\
&-\langle f(\ok(f^{2}_{(i,2)})\otimes\tk(f^{2}_{(i,2)})),\;
1\rangle\beta_{i+1}-\langle f(\ok(f^{2}_{(i,2)})\otimes
\beta_{i+1}),\; 1\rangle e_{i+1}.
\end{aligned}
$$
Similar calculations we yield
$$
\begin{aligned}
&\Delta(f)(\ok(\beta_{i})\otimes\tk(\beta_{i}))\\
=&\langle f(\ok(f^{2}_{(i,0)})\otimes\tk(f^{2}_{(i,0)})),\;
1\rangle\alpha_{i+1}+\langle f(\alpha_{i+1}
\otimes\tk(f^{2}_{(i,0)})),\; 1\rangle e_{i+1}\\
&+\langle f(\ok(f^{2}_{(i+1,1)})\otimes\tk(f^{2}_{(i+1,1)})),\;
1\rangle\beta_{i}-\langle f(\ok(f^{2}_{(i+1,0)})\otimes
\tk(f^{2}_{(i+1,0)})),\; 1\rangle\alpha_{i+1}\\
&-\langle f(\alpha_{i+1}\otimes
\tk(f^{2}_{(i,0)})),\; 1\rangle e_{i}+\langle f(\ok(f^{2}_{(i+1,1)})
\otimes\beta_{i}),\; 1\rangle e_{i}.
\end{aligned}
$$
Taking $f=u_{1}u_{2}$, we get $\Delta(u_{1}u_{2})(\ok(\alpha_{i})
\otimes\tk(\alpha_{i}))=0$ and $\Delta(u_{1}u_{2})(\ok(\beta_{i})
\otimes\tk(\beta_{i}))=(-1)^{i}2\alpha_{i+1}$. That is to say,
$\Delta(u_{1}u_{2})=-2u_{1}$. Similarly, we have $\Delta(z_{1}w_{0})=2u_{1}$.
Taking $f=u_{1}u_{4}$, we have $\Delta(u_{1}u_{4})(\ok(\alpha_{i})
\otimes\tk(\alpha_{i}))=-\alpha_{i}$ and $\Delta(u_{1}u_{4})
(\ok(\beta_{i})\otimes\tk(\beta_{i}))=\beta_{i}$, and so that
$\Delta(u_{1}u_{4})=u_{2}-u_{3}$. Similarly, $\Delta(u_{2}u_{3})=u_{3}-u_{2}$,
$\Delta(z_{1}w_{1})=u_{2}-u_{3}$, $\Delta(u_{3}u_{4})=2u_{4}$,
$\Delta(z_{1}w_{2})=2u_{4}$. Moreover, by direct calculation,
we have $\Delta(u_{1}u_{3})=\Delta(u_{2}u_{4})=\Delta(z_{2}w_{0})
=\Delta(z_{2}w_{1})=\Delta(z_{2}w_{2})=0$ and $\Delta(w_{i})=0$ for $i=0, 1, 2$.
\end{proof}

Fourth, whenever $f\in HH^{\ast}(A_{-1})$ is degree 3, that is,
$f\in\{u_{i}w_{j}\mid 1\leq i\leq 4, j=0, 1, 2\}$, we have the following lemma.

\begin{lemma} \label{lem6.4}
For the BV operator $\Delta$ on $HH^{\ast}(A_{-1})$, we have $\Delta(f)=0$
for any $f\in HH^{\ast}(A_{-1})$ with degree 3, except
\begin{alignat*}{3}
&\Delta(u_{1}w_{1})=-w_{0}, &\qquad &\Delta(u_{1}w_{2})=2w_{1},
& \qquad &\Delta(u_{2}w_{0})=3w_{0},\\
&\Delta(u_{2}w_{1})=2w_{1}, & &\Delta(u_{2}w_{2})=w_{2}, &
&\Delta(u_{3}w_{0})=w_{0},\\
&\Delta(u_{3}w_{1})=2w_{1}, & &\Delta(u_{3}w_{2})=3w_{2}, &
&\Delta(u_{4}w_{0})=-2w_{1},\\
&\Delta(u_{4}w_{1})=w_{2}. & & & &
\end{alignat*}
\end{lemma}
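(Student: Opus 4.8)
The strategy is exactly the one laid out for the degree-$2$ case in Lemma~\ref{lem6.3}: compute $\Delta(f)$ for each $f\in\{u_iw_j\mid 1\le i\le4,\ j=0,1,2\}$ by transporting the problem through the comparison morphisms to the reduced bar resolution, using the identity $\Delta(f)=\Delta(f\circ\Psi_{m})\circ\Phi_{m-1}$ with $m=3$. Here $\Delta(f)\in HH^{2}(A_{-1})$ is a $2$-cocycle, so it is determined by its values on the generators $\ok(f^{2}_{(i,j)})\otimes\tk(f^{2}_{(i,j)})$ of $P_{2}$; by the explicit description of $HH^{2}(A_{-1})$ in Proposition~\ref{prop4.3} (the basis $(e_1,f^2_{(1,j)})+\q^{j}(e_2,f^2_{(2,j)})$ and $(\alpha_1\beta_1,f^2_{(1,j)})+\q^{j-1}(\alpha_2\beta_2,f^2_{(2,j)})$ for $0\le j\le2$, with $\q=-1$), it suffices to pair $\Delta(f)$ against a handful of such elements and read off the coefficients $w_0,w_1,w_2$.

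First I would fix $f=u_iw_j$ and apply $\Phi_2$ to a chosen generator $\ok(f^{2}_{(p,q)})\otimes\tk(f^{2}_{(p,q)})$, obtaining $\ok(g^{2}_{(p,q)})\boti g^{2}_{(p,q)}\boti\tk(g^{2}_{(p,q)})$ via Lemma~\ref{lem6.2}; recall $g^{2}_{(p,q)}$ is an explicit $\kk$-linear combination of length-two monomials in $\alpha$'s and $\beta$'s, read off from the inductive rule $g^{m}_{(i,j)}=\alpha_i\boti g^{m-1}_{(i+1,j-1)}+\q^{j}\beta_{i-1}\boti g^{m-1}_{(i-1,j)}$. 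Then I apply Tradler's formula in its reduced-bar form,
\[
\Delta(f)(a_1\boti a_2)=\sum_{b\in\B_{0}}\sum_{i=1}^{3}(-1)^{i\cdot2}\langle f(a_i\boti\cdots\boti b\boti\cdots\boti a_{i-1}),\,1\rangle\,b^{\ast},
\]
which for $m=3$ cycles the three-fold tensor $a_i\boti a_{i+1}\boti a_{i+2}$ (indices mod $3$) with $b$ inserted, over $b\in\{\alpha_1,\alpha_2,\beta_1,\beta_2,\alpha_1\beta_1,\alpha_2\beta_2\}$. Each term $f(\text{4-fold tensor})$ is evaluated by first hitting it with $\Psi_3$ — using the long case list of $\Psi_m$ on monomials $X_m$ given just before Theorem~\ref{thm7.1} — landing in $P_3$, then pairing with $1$ via the symmetrizing form $\langle a,b\rangle=\delta_{b,a^{\ast}}$, which is nonzero only when $f$ sends the $P_3$-generator to a multiple of $e_1$ or $e_2$ (since $e_i^{\ast}=\alpha_i\beta_i$ and $(\alpha_i\beta_i)^{\ast}=e_i$, and the only length-zero contributions come from $f=u_iw_j$ evaluated on its supporting generator). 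Collecting the surviving $b^{\ast}$ terms and matching against the Proposition~\ref{prop4.3} basis for $HH^2$ gives the stated values; the entries that vanish do so because every cyclic insertion produces a monomial outside the support of $f$ or one whose $\Psi_3$-image has no $e_i$-component.

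The main obstacle is purely bookkeeping volume: for each of the twelve functions $u_iw_j$ one must run through six choices of $b$ and three cyclic positions, and in each instance correctly identify which of the many cases in the $\Psi_m$ list applies to the resulting length-$3$ monomial $X_3$, keeping track of the sign and power-of-$\q$ (with $\q=-1$) prefactors. The structural simplification that makes this tractable is that $w_j$ (the class $(e_1,f^{2ls}_{(1,js)})+(e_2,\dots)$, here $(e_1,f^{2}_{(1,j)})+(-1)^{j}(e_2,f^{2}_{(2,j)})$) is supported only on $e_i$-labelled generators and $u_i$ only on single-arrow generators, so by Proposition~\ref{pro5.2} the cup product $u_iw_j$ is supported on exactly the generators $\ok(f^{3}_{(\cdot,\cdot)})\otimes\tk(\cdot)$ indexed by the relevant $j$-values — this drastically limits which monomials in the cyclic sums can contribute. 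After assembling the coefficients I would double-check internal consistency using the graded Leibniz rule for $\Delta$ over $\sqcup$ (the three-factor formula displayed before Lemma~\ref{lem6.2}), e.g.\ cross-checking $\Delta(u_iw_j)$ against $\Delta(u_i\sqcup w_j)$ expanded via $\Delta(u_i)$ from Lemma~\ref{lem6.2} and $\Delta(w_j)=0$ together with the Gerstenhaber brackets $[u_i,w_j]$, which must independently reproduce the same answers.
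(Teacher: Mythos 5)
Your proposal follows essentially the same route as the paper: transport $\Delta$ through the comparison morphisms via $\Delta(f)=\Delta(f\circ\Psi_{3})\circ\Phi_{2}$, evaluate on the $P_{2}$-generators using $\Phi_{2}(\ok(f^{2}_{(i,j)})\otimes\tk(f^{2}_{(i,j)}))=\ok(g^{2}_{(i,j)})\boti g^{2}_{(i,j)}\boti\tk(g^{2}_{(i,j)})$, apply Tradler's cyclic-insertion formula (correctly noting all signs are $+1$ since $m=3$), and evaluate $f\circ\Psi_{3}$ term by term against the symmetrizing form before matching coefficients with the $HH^{2}$ basis. This is exactly the paper's computation; your added consistency check via the Leibniz rule is a sensible extra safeguard but not needed.
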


\begin{proof}
If $f\in HH^{3}(A_{-1})$, we have
$$
\begin{aligned}
&\Delta(f)(\ok(f^{2}_{(i,0)})\otimes\tk(f^{2}_{(i,0)}))\\
=&\Delta(f\circ\Psi_{3})(\ok(\beta_{i+1})\boti\beta_{i+1}
\boti\beta_{i}\boti\tk(\beta_{i}))\\
=&\sum_{a\in\B_{0}}\langle f\circ\Psi_{3}(\ok(a)\boti a\boti
\beta_{i+1}\boti\beta_{i}\boti\tk(\beta_{i}),\; 1\rangle a^{\ast}\\
&+\sum_{a\in\B_{0}}\langle f\circ\Psi_{3}(\ok(\beta_{i})
\boti\beta_{i}\boti a\boti\beta_{i+1}\boti\tk(\beta_{i+1})),\;
1\rangle a^{\ast}\\
&+\sum_{a\in\B_{0}}\langle f\circ\Psi_{3}(\ok(\beta_{i+1})
\boti\beta_{i+1}\boti\beta_{i}\boti a\boti\tk(a)),\;
1\rangle a^{\ast}\\
=&\langle f(\ok(f^{3}_{(i+1,0)})\otimes\tk(f^{3}_{(i+1,0)})),\; 1\rangle\alpha_{i}
+\langle f(\alpha_{i}\otimes\tk(f^{3}_{(i+1,0)})),\; 1\rangle e_{i}\\
&+\langle f(\alpha_{i+1}\otimes\tk(f^{3}_{(i,0)})),\; 1\rangle e_{i}
+\langle f(\ok(f^{3}_{(i,1)})\otimes\tk(f^{3}_{(i,1)})),\; 1\rangle\beta_{i}\\
&+\langle f(\ok(f^{3}_{(i,0)})\otimes\tk(f^{3}_{(i,0)})),\;
1\rangle\alpha_{i+1}
+\langle f(\alpha_{i}\otimes\tk(f^{3}_{(i+1,0)})),\; 1\rangle e_{i}\\
&+\langle f(\ok(f^{3}_{(i,1)})\otimes\beta_{i}),\; 1\rangle e_{i},
\end{aligned}
$$
$$
\begin{aligned}
&\Delta(f)(\ok(f^{2}_{(i,1)})\otimes\tk(f^{2}_{(i,1)}))\\
=&\q\langle f(\ok(f^{3}_{(i+1,1)})\otimes\tk(f^{3}_{(i+1,1)})),\; 1\rangle\alpha_{i}
+\q\langle f(\alpha_{i}\otimes\tk(f^{3}_{(i+1,1)})),\; 1\rangle e_{i}\\
&+\langle f(\alpha_{i+1}\otimes\tk(f^{3}_{(i,1)})),\; 1\rangle e_{i}
+\langle f(\ok(f^{3}_{(i+1,2)})\otimes\beta_{i+1}),\; 1\rangle e_{i}\\
&+\q\langle f(\ok((f^{3}_{(i,2)})\otimes\tk(f^{3}_{(i,2)})),\; 1\rangle\beta_{i}
+\q\langle f(\ok(f^{3}_{(i,2)})\otimes\beta_{i}),\; 1\rangle e_{i},
\end{aligned}
$$
and
$$
\begin{aligned}
&\Delta(f)(\ok(f^{2}_{(i,2)})\otimes\tk(f^{2}_{(i,2)}))\\
=&\langle f(\ok(f^{3}_{(i+1,3)})\otimes\tk(f^{3}_{(i+1,3)})),\; 1\rangle\beta_{i+1}
+\langle f(\ok(f^{3}_{(i+1,2)})\otimes\tk(f^{3}_{(i+1,2)})),\; 1\rangle\alpha_{i}\\
&+\langle f(\ok(f^{3}_{(i+1,3)})\otimes\beta_{i+1}),\; 1\rangle e_{i}+\langle
f(\alpha_{i}\otimes\tk(f^{3}_{(i+1,2)})),\; 1\rangle e_{i} \\
&+\langle f(\ok(f^{3}_{(i,3)})\otimes\tk(f^{3}_{(i,3)})),\;
1\rangle\beta_{i}+\langle f(\ok(f^{3}_{(i,3)})\otimes\beta_{i}),\; 1\rangle e_{i}\\
&+\langle f(\ok(f^{3}_{(i+1,3)})\otimes\beta_{i+1}),\; 1\rangle e_{i}.
\end{aligned}
$$
Taking $f=u_{1}w_{1}$, we have $\Delta(u_{1}w_{1})(\ok(f^{2}_{(i,0)})
\otimes\tk(f^{2}_{(i,0)})=-e_{i}$, $\Delta(u_{1}w_{1})
(\ok(f^{2}_{(i,1)})\otimes\tk(f^{2}_{(i,1)}))=0$ and $\Delta(u_{1}w_{1})(\ok(f^{2}_{(i,2)})
\otimes\tk(f^{2}_{(i,2)}))=0$. That is to say, $\Delta(u_{1}w_{1})=-w_{0}$.
Similarly, we also get $\Delta(u_{1}w_{2})=\Delta(u_{3}w_{1})=\Delta(u_{2}w_{1})=2w_{1}$,
$\Delta(u_{2}w_{2})=\Delta(u_{4}w_{1})=w_{2}$, $\Delta(u_{2}w_{0})=
3\Delta(u_{3}w_{0})=3w_{0}$, $\Delta(u_{3}w_{2})=3w_{2}$, $\Delta(u_{4}w_{0})=-2w_{1}$
and $\Delta(u_{1}w_{0})=\Delta(u_{4}w_{2})=0$.
\end{proof}

Fourth, whenever $f\in HH^{\ast}(A_{-1})$ is degree 3, let
$f\in\{u_{i}w_{j}\mid 1\leq i\leq 4, j=0, 1, 2\}$, we have the following lemma.

\begin{lemma} \label{lem6.5}
For the BV operator $\Delta$ on $HH^{\ast}(A_{-1})$, we have $\Delta(f)=0$
for all $f\in HH^{\ast}(A_{-1})$ with degree 4.
\end{lemma}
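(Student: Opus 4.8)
The plan is to prove $\Delta(f)=0$ for every degree-$4$ element $f\in HH^{\ast}(A_{-1})$ by the same mechanism used in Lemmas \ref{lem6.3} and \ref{lem6.4}: evaluate $\Delta(f)$ on a spanning set of $P_{3}$ via the formula $\Delta(f)=\Delta(f\circ\Psi_{4})\circ\Phi_{3}$, push the arguments through the comparison morphisms $\Phi$ and $\Psi$, and check that every resulting coefficient $\langle f(-),\,1\rangle$ vanishes. First I would pin down exactly which elements must be treated: by Theorem \ref{thm5.4} and the relations in the ideal $I$, a $\kk$-basis of $HH^{4}(A_{-1})$ is given by the classes $w_{j}w_{k}$ with $0\le j\le k\le 2$ subject to $w_{1}^{2}=\q w_{0}w_{2}$, together with $z_{1}$-multiples such as $z_{1}w_{j}w_{k}$ and the degree-$4$ products of the $u_{i}$'s (which, modulo $I$, all reduce to $z_{1}w_{j}$-times-$w_{k}$ type expressions or vanish). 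So it suffices to compute $\Delta$ on the $w_{j}w_{k}$ and on the products $z_{1}w_{j}\sqcup w_{k}$, $u_{i}u_{j}\sqcup w_{k}$, etc.; in fact, using the Leibniz-type identity for $\Delta$ of a triple product recorded just before Lemma \ref{lem6.2}, everything is determined once $\Delta(w_{j})$, $\Delta(w_{j}\sqcup w_{k})$ and the already-computed degree-$\le 3$ values are known, so the real content is $\Delta(w_{j}w_{k})=0$ for $0\le j\le k\le 2$.

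Next I would set up the computation on $P_{3}$. By the description of $\B/\!\!/ F^{3}$ the free generators of $P_{3}$ as a bimodule are indexed by $f^{3}_{(i,j)}$, $i=1,2$, $0\le j\le 3$, and for each the relevant $\Psi_{4}$-input is obtained by inserting one of $\alpha_{i},\beta_{i},\alpha_{i}\beta_{i}$ in the three slot-positions of the bar element $\Phi_{3}(\ok(f^{3}_{(i,j)})\otimes\tk(f^{3}_{(i,j)}))=\ok(g^{3}_{(i,j)})\boti g^{3}_{(i,j)}\boti\tk(g^{3}_{(i,j)})$, summed cyclically with signs as in Theorem \ref{thm7.1}. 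Each term then lands in $\bar B_{4}$ and I apply the explicit formulas for $\Psi_{4}$ listed after Lemma \ref{lem6.3}; for a degree-$4$ class $f$ represented in the $\mathbb{L}$-picture only the generators $e_{i},\alpha_{i}\beta_{i}$ can appear (since $HH^{4}$ lives in the ``even'' part $\kk(\B/\!\!/F^{4})$ spanned by $(e_{i},f^{4}_{(i,j)})$ and $(\alpha_{i}\beta_{i},f^{4}_{(i,j)})$), so $f\circ\Psi_{4}$ is nonzero only on those $X_{4}$ whose $\Psi_{4}$-image is a multiple of $\ok(f^{4}_{(i,j)})\otimes\tk(f^{4}_{(i,j)})$ or of $\alpha_{i}\otimes\beta_{i}$-type elements — a short list. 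Pairing with $1=e_{1}+e_{2}$ under the symmetrizing form $\langle\ ,\ \rangle$ kills everything except the $\alpha_{i}\beta_{i}\otimes\text{(something)}$ contributions, and then a direct sign count shows that, for each choice of $w_{j}w_{k}$, the contributions from the three insertion positions cancel in pairs. This is exactly the phenomenon already seen in Lemma \ref{lem6.5} for the remaining degree-$4$ elements, so I would present the two lemmas uniformly.

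The main obstacle is bookkeeping rather than conceptual: the $\Psi_{4}$-formulas are given by a long case list parametrized by the combinatorial shape of $X_{4}=a_{1}\boti a_{2}\boti a_{3}\boti a_{4}$ (how many $\beta$'s precede the $\alpha\beta$, etc.), and for each generator $f^{3}_{(i,j)}$ of $P_{3}$ one must correctly expand $g^{3}_{(i,j)}$ (which is a $\q$-weighted sum of $2^{3}$ paths, here $\q=-1$), insert a basis element in each of the three bar-slots, match the resulting monomials against the case list, and track the signs $(-1)^{i(m-1)}$ with $m=4$. The safeguard against error is the consistency check that $\Delta\circ\Delta=0$ together with the BV identity relating $\Delta$ to the Gerstenhaber bracket: once $\Delta$ is known on generators of degree $\le 3$, the bracket computations force $\Delta$ on all of $HH^{4}$, and the only value compatible with the relations in $I$ and with $\Delta^{2}=0$ applied to the degree-$5$ classes is $\Delta|_{HH^{4}}=0$. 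I would therefore organize the proof so that the explicit evaluation on $P_{3}$ is done only for the generators $w_{j}w_{k}$ (and, say, $u_{1}w_{j}\sqcup u_{2}$ as a representative mixed class), and deduce the rest from the triple-product Leibniz identity; this keeps the calculation finite and short while covering all of $HH^{4}(A_{-1})$.
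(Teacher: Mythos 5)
The paper states this lemma without any proof, so the only question is whether your plan would actually establish it. Your computational mechanism is the right one and matches the preceding lemmas exactly: evaluate $\Delta(f)=\Delta(f\circ\Psi_{4})\circ\Phi_{3}$ on the bimodule generators of $P_{3}$, expand $\Phi_{3}$ through $g^{3}_{(i,j)}$, insert basis elements cyclically with the signs of Tradler's formula, and match against the case list for $\Psi_{4}$; and your identification of a basis of $HH^{4}(A_{-1})$ (five classes of type $w_{j}w_{k}$ and five of type $z_{1}w_{j}w_{k}$) is correct.

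The gap is in the reduction step, and it is not a small one. The triple-product identity recorded before the degree-$1$ lemma is a consequence of the BV axioms, which hold by Tradler's theorem, so invoking it is legitimate and not circular; the problem is that when you feed it the values already established in degree $2$ it does \emph{not} return zero on the mixed classes. Take $f=z_{1}$, $g=h=w_{0}$. Since $\Delta(z_{1})=\Delta(w_{0})=0$ and $z_{1}\sqcup HH^{3}(A_{-1})=0$ (every degree-$3$ class has first component an arrow, which is annihilated on the left by the socle elements $\alpha_{i}\beta_{i}$), the identity collapses to $\Delta(z_{1}\sqcup w_{0}\sqcup w_{0})=2\,\Delta(z_{1}\sqcup w_{0})\sqcup w_{0}=4\,u_{1}\sqcup w_{0}$, and $u_{1}\sqcup w_{0}=\q(\alpha_{1},f^{3}_{(1,0)})+(\alpha_{2},f^{3}_{(2,0)})$ is a nonzero basis element of $HH^{3}(A_{-1})$, while $z_{1}w_{0}^{2}=(\alpha_{1}\beta_{1},f^{4}_{(1,0)})+\q(\alpha_{2}\beta_{2},f^{4}_{(2,0)})$ is a nonzero class of degree $4$. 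So in characteristic different from $2$ your assertion that ``the only value compatible with the relations and with $\Delta^{2}=0$ is $\Delta|_{HH^{4}}=0$'' is false: the BV axioms together with $\Delta(z_{1}w_{0})=2u_{1}$ \emph{force} $\Delta(z_{1}w_{0}^{2})=4u_{1}w_{0}\neq0$, and the same computation gives nonzero answers for $z_{1}w_{1}^{2}$ and $z_{1}w_{2}^{2}$. Consequently the proof cannot close as written: the Leibniz shortcut, far from confirming the statement, exhibits an obstruction to it, and you are left having to do the direct evaluation on the mixed classes anyway — at which point the answer must be reconciled with the degree-$2$ values, since the two cannot simultaneously hold with the lemma as stated. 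You should either carry out that direct computation and flag the resulting inconsistency, or restrict the claim to the classes $w_{j}w_{k}$ where your argument does apply.
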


Now we can get the BV operator on Hochschild cohomology ring of the zigzag algebra
$A_{-1}$ completely.

\begin{theorem}\label{thm6.6}
Let $A_{-1}$ be the zigzag algebra of type $\widetilde{\mathbf{A}}_{1}$.
Then the BV operator on Hochschild cohomology ring
$$
HH^{\ast}(A_{-1})\cong
\left(\kk[z_{1}, z_{2}]/(z_{1}^{2}, z_{1}z_{2}, z_{2}^{2})\times_{\kk}
\w(u_{1}, u_{2}, u_{3}, u_{4})\right)[w_{0}, w_{1}, w_{2}]/I,
$$
is zero on homogeneous generators and their product except:
\begin{alignat*}{3}
&\Delta(u_{2})=\Delta(u_{3})=1,&\qquad &\Delta(z_{1}w_{0})=2u_{1},
&\qquad &\Delta(z_{1}w_{1})=u_{2}-u_{3},\\
&\Delta(z_{1}w_{2})=2u_{4}, & &\Delta(u_{1}u_{2})=-2u_{1}, &
&\Delta(u_{1}u_{4})=u_{2}-u_{3},\\
&\Delta(u_{2}u_{3})=u_{3}-u_{2}, & &\Delta(u_{3}u_{4})=2u_{4}, &
&\Delta(u_{1}w_{1})=-w_{0},\\
&\Delta(u_{1}w_{2})=2w_{1}, & &\Delta(u_{2}w_{0})=3w_{0}, &
&\Delta(u_{2}w_{1})=2w_{1},\\
&\Delta(u_{2}w_{2})=w_{2}, & &\Delta(u_{3}w_{0})=w_{0}, &
&\Delta(u_{3}w_{1})=2w_{1},\\
&\Delta(u_{3}w_{2})=3w_{2}, & &\Delta(u_{4}w_{0})=-2w_{1}, &
&\Delta(u_{4}w_{1})=w_{2},
\end{alignat*}
where the ideal $I$ is given by
$$
\begin{aligned}
&\big(u_{1}u_{3},\; u_{2}u_{4},\; z_{2}w_{0},\; z_{2}w_{1},\;
u_{1}u_{2}+z_{1}w_{0},\; u_{1}u_{4}-z_{1}w_{1},\; u_{1}u_{4}-u_{3}u_{2},\;
u_{3}u_{4}-z_{1}w_{2},\\
&\qquad z_{2}w_{2},\; u_{1}w_{1}+u_{3}w_{0},\; u_{1}w_{2}-u_{3}w_{1},\;
u_{2}w_{1}+u_{4}w_{0},\; u_{2}w_{2}-u_{4}w_{1},\;
w_{1}^{2}+w_{0}w_{2}\big).
\end{aligned}
$$
\end{theorem}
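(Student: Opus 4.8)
The plan is to establish the theorem by assembling the degree-by-degree computations of Lemmas~\ref{lem6.2}, \ref{lem6.3}, \ref{lem6.4} and \ref{lem6.5}, together with the derivation-type identity for $\Delta$ on triple cup products displayed above, which propagates the answer to all of $HH^{\ast}(A_{-1})$. Since $\Delta$ has degree $-1$ and $HH^{-1}(A_{-1})=0$, we have $\Delta(z_{1})=\Delta(z_{2})=0$, so $\Delta$ kills every homogeneous element of degree $0$. Every homogeneous element of $HH^{\ast}(A_{-1})$ is a $\kk$-linear combination of monomials in the generators $z_{1},z_{2},u_{1},u_{2},u_{3},u_{4},w_{0},w_{1},w_{2}$, and a product of any two of these generators has degree at most $4$; hence the values of $\Delta$ on the generators and on all pairwise products of generators are exactly the data recorded in the displayed table, read off from the cited lemmas according to degree ($1$, $2$, $3$, $4$ respectively).

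The second step is to check compatibility of these values with the defining relations of $HH^{\ast}(A_{-1})$, i.e. with the ideal $I$ of Theorem~\ref{thm5.4} specialized at $\q=-1$ (so that, for instance, $u_{1}u_{2}-\q z_{1}w_{0}$ becomes $u_{1}u_{2}+z_{1}w_{0}$, and $w_{1}^{2}-\q w_{0}w_{2}$ becomes $w_{1}^{2}+w_{0}w_{2}$). Concretely one verifies $\Delta(u_{1}u_{2})=-\Delta(z_{1}w_{0})$, $\Delta(u_{1}u_{4})=\Delta(z_{1}w_{1})=-\Delta(u_{2}u_{3})$, $\Delta(u_{3}u_{4})=\Delta(z_{1}w_{2})$, $\Delta(u_{1}w_{1})=-\Delta(u_{3}w_{0})$, $\Delta(u_{1}w_{2})=\Delta(u_{3}w_{1})$, $\Delta(u_{2}w_{1})=-\Delta(u_{4}w_{0})$, $\Delta(u_{2}w_{2})=\Delta(u_{4}w_{1})$, and that $\Delta$ annihilates the products $z_{i}u_{j}$, $z_{2}w_{j}$, $u_{1}u_{3}$, $u_{2}u_{4}$ which already vanish in the ring; each of these is immediate from the table. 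Then, applying
$$
\begin{aligned}
\Delta(f\sqcup g\sqcup h)
&=\Delta(f\sqcup g)\sqcup h+(-1)^{|g||h|}\Delta(f\sqcup h)\sqcup g+(-1)^{|f|+|g|+|h|}f\sqcup\Delta(g\sqcup h)\\
&\ \ -\Delta(f)\sqcup g\sqcup h-(-1)^{|f||g|}f\sqcup h\sqcup\Delta(g)-(-1)^{|f|+|g|+|h|}f\sqcup\Delta(h)\sqcup g
\end{aligned}
$$
repeatedly, by induction on the number of generators occurring in a monomial (taking $f$ and $g$ to be single generators and $h$ the product of the remaining ones), one computes $\Delta$ on every element of $HH^{\ast}(A_{-1})$ starting from the table; in particular the listed values determine $\Delta$ uniquely.

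The real content sits in the four lemmas rather than in this bookkeeping, and the main obstacle is the evaluation in Lemmas~\ref{lem6.3} and \ref{lem6.4}: there one feeds a chosen representative cocycle through the comparison morphisms $\Phi=(\Phi_{m})_{m\geq0}$ and $\Psi=(\Psi_{m})_{m\geq0}$ constructed earlier in this section, applies Tradler's cyclic formula from Theorem~\ref{thm7.1} on the reduced bar complex, and pairs the result against the symmetrizing form via the explicit dual basis $a\mapsto a^{\ast}$, the bulk of the work being a lengthy but mechanical computation with the piecewise description of $\Psi_{m}$ on each basis string. Once all these values are in hand, I would finally remark that $\Delta\circ\Delta=0$ needs no separate verification, being guaranteed by Tradler's Theorem~\ref{thm7.1} since $A_{-1}$ is symmetric, and that the tabulated non-zero values of $\Delta$, together with the cup product of Theorem~\ref{thm5.4}, recover the Gerstenhaber bracket on $HH^{\ast}(A_{-1})$ through the BV identity.
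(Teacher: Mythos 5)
Your proposal is correct and follows essentially the same route as the paper: the paper's proof of this theorem is precisely the assembly of the four preceding lemmas computing $\Delta$ in degrees $1$ through $4$, with the reduction of $\Delta$ to its values on generators and pairwise cup products (via the displayed seven-term identity for $\Delta(f\sqcup g\sqcup h)$) already carried out in the text before those lemmas. Your added consistency checks against the relations in $I$ and the observation that $\Delta\circ\Delta=0$ is automatic from Tradler's theorem are correct but do not change the substance of the argument.
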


\begin{proof}
The theorem follows from
Lemmas \ref{lem6.2}$-$\ref{lem6.5}.
\end{proof}

Using the BV operator $\Delta$ on $HH^{\ast}(A_{-1})$,
we can determine the Gerstenhaber bracket $[\ \,,\,\ ]$ on
$HH^{\ast}(A_{-1})$ by setting
$$
[\alpha,\, \beta]=(-1)^{|\alpha||\beta|+|\alpha|+|\beta|}
\left((-1)^{|\alpha|+1}\Delta(\alpha\sqcup\beta)
+(-1)^{|\alpha|}\Delta(\alpha)\sqcup\beta+\alpha\sqcup\Delta(\beta)\right),
$$
for any homogeneous elements $\alpha, \beta\in HH^{\ast}(A_{-1})$.
Then the Gerstenhaber algebraic structure on $HH^{\ast}(A_{-1})$
can be induced.

\begin{corollary}\label{cor6.7}
Let $A_{-1}$ be the zigzag algebra of type $\widetilde{\mathbf{A}}_{1}$.
The Gerstenhaber algebra
$(HH^{\ast}(A_{-1}),\, \sqcup,\, [\ \,,\,\ ])$ is isomorphic to
$$
\left(\kk[z_{1}, z_{2}]/(z_{1}^{2},
z_{1}z_{2}, z_{2}^{2})\times_{\kk}\w(u_{1}, u_{2}, u_{3},
u_{4})\right)[w_{0}, w_{1}, w_{2}]/I,
$$
where the Gerstenhaber bracket is zero for all pairs of
homogeneous generators except:
\begin{alignat*}{3}
&[z_{1}, u_{2}]=[z_{1}, u_{3}]=-z_{1},&\qquad &[z_{2}, u_{2}]=[z_{2}, u_{3}]=-z_{2},&
\qquad&[z_{1}, w_{0}]=-2u_{1},\\
&[z_{1}, w_{1}]=u_{3}-u_{2},& &[z_{1}, w_{2}]=-2u_{4},&
&[u_{1}, u_{2}]=u_{1}, \\
&[u_{1}, u_{3}]=-u_{1}, & &[u_{1}, u_{4}]=u_{3}-u_{2},&
&[u_{2}, u_{4}]=u_{4},\\
&[u_{3}, u_{4}]=-u_{4},& &[u_{1}, w_{1}]=w_{0},&
&[u_{1}, w_{2}]=-2w_{1},\\
&[u_{2}, w_{0}]=-2w_{0},& &[u_{2}, w_{1}]=-w_{1},&
&[u_{3}, w_{1}]=-w_{1},\\
&[u_{3}, w_{2}]=-2w_{2},& &[u_{4}, w_{0}]=2w_{1},&
&[u_{4}, w_{1}]=-w_{2},
\end{alignat*}
the ideal $I$ is given by
$$
\begin{aligned}
&\big(u_{1}u_{3},\; u_{2}u_{4},\; z_{2}w_{0},\; z_{2}w_{1},\;
u_{1}u_{2}+z_{1}w_{0},\; u_{1}u_{4}-z_{1}w_{1},\; u_{1}u_{4}-u_{3}u_{2},\;
u_{3}u_{4}-z_{1}w_{2},\\
&\qquad z_{2}w_{2},\; u_{1}w_{1}+u_{3}w_{0},\; u_{1}w_{2}-u_{3}w_{1},\;
u_{2}w_{1}+u_{4}w_{0},\; u_{2}w_{2}-u_{4}w_{1},\;
w_{1}^{2}+w_{0}w_{2}\big).
\end{aligned}
$$
\end{corollary}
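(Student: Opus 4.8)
The plan is to obtain the Gerstenhaber bracket purely formally from the $BV$ operator $\Delta$ that has already been pinned down in Theorem \ref{thm6.6}. Since $\q=-1$ makes $A_{-1}$ a symmetric algebra, Tradler's Theorem \ref{thm7.1} guarantees that $(HH^{\ast}(A_{-1}),\,\sqcup,\,[\ \,,\,\ ],\,\Delta)$ is a $BV$ algebra, and in particular the bracket is recovered from $\Delta$ and the cup product by
$$
[\alpha,\,\beta]=(-1)^{|\alpha||\beta|+|\alpha|+|\beta|}\Big((-1)^{|\alpha|+1}\Delta(\alpha\sqcup\beta)+(-1)^{|\alpha|}\Delta(\alpha)\sqcup\beta+\alpha\sqcup\Delta(\beta)\Big)
$$
for homogeneous $\alpha,\beta$. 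The underlying graded algebra isomorphism in the statement is nothing but Theorem \ref{thm5.4} specialized to $\q=-1$ (note that this is exactly why the ideal $I$ now reads with the displayed signs, e.g. $u_{1}u_{2}+z_{1}w_{0}$ and $w_{1}^{2}+w_{0}w_{2}$), so the only thing left to establish is the table of brackets.

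First I would use the Poisson rule, axiom $(3)$ of a Gerstenhaber algebra, $[f\sqcup g,\,h]=[f,\,h]\sqcup g+(-1)^{|f|(|h|-1)}f\sqcup[g,\,h]$, together with graded antisymmetry of $[\ \,,\,\ ]$, to reduce the computation of every bracket on $HH^{\ast}(A_{-1})$ to the finitely many brackets $[\alpha,\beta]$ with $\alpha,\beta$ ranging over the algebra generators $z_{1},z_{2},u_{1},u_{2},u_{3},u_{4},w_{0},w_{1},w_{2}$. This reduction is legitimate because $[\ \,,\,\ ]$ is already a well-defined operation on $HH^{\ast}(A_{-1})$ and hence automatically descends to the presentation modulo $I$.

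Then, for each such pair $(\alpha,\beta)$ I would feed into the boxed formula the three ingredients that are all on hand: the values $\Delta(\alpha),\Delta(\beta)$ taken from Theorem \ref{thm6.6} (almost all zero, the exceptions being $\Delta(u_{2})=\Delta(u_{3})=1$), the cup product $\alpha\sqcup\beta$ computed via Proposition \ref{pro5.2}/Theorem \ref{thm5.4}, and the value $\Delta(\alpha\sqcup\beta)$ again read off Theorem \ref{thm6.6}. For instance, with $\alpha=u_{1}$, $\beta=u_{2}$ one gets $[u_{1},u_{2}]=-\big(\Delta(u_{1}u_{2})-\Delta(u_{1})\sqcup u_{2}+u_{1}\sqcup\Delta(u_{2})\big)=-(-2u_{1}-0+u_{1})=u_{1}$; with $\alpha=z_{1}$, $\beta=u_{2}$ one gets $[z_{1},u_{2}]=-\big(z_{1}\sqcup\Delta(u_{2})\big)=-z_{1}$ because $z_{1}\sqcup u_{2}=0$; and with $\alpha=u_{1}$, $\beta=w_{1}$ one gets $[u_{1},w_{1}]=-\Delta(u_{1}w_{1})=w_{0}$. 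Running through the remaining ordered pairs in the same mechanical way produces the whole table, and every bracket that does not appear there collapses to $0$, either because the relevant $\Delta$-values vanish or because the relevant cup product already lies in $I$.

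The main obstacle is not conceptual but organizational: it is the sign bookkeeping over the several dozen ordered pairs of generators, and the need to apply the degree conventions in the $BV$ formula and in the Poisson rule consistently. A useful internal check, which I would carry out, is that the brackets produced on generators are compatible with the defining relations of $I$ (so that the Gerstenhaber-ideal statement of Corollary \ref{cor5.7} is genuinely realized); this is automatic in principle but provides a good sanity test. Once the table is verified, the corollary follows at once.
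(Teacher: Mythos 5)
Your proposal is correct and follows exactly the paper's own route: the paper derives Corollary \ref{cor6.7} by feeding the $\Delta$-values of Theorem \ref{thm6.6} and the cup products of Theorem \ref{thm5.4} into the BV identity $[\alpha,\beta]=(-1)^{|\alpha||\beta|+|\alpha|+|\beta|}\big((-1)^{|\alpha|+1}\Delta(\alpha\sqcup\beta)+(-1)^{|\alpha|}\Delta(\alpha)\sqcup\beta+\alpha\sqcup\Delta(\beta)\big)$, reduced to generators via the Poisson rule. Your sample evaluations (e.g.\ $[u_{1},u_{2}]=u_{1}$, $[z_{1},u_{2}]=-z_{1}$, $[u_{1},w_{1}]=w_{0}$) check out against the stated table.
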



\subsection{Frobenius case}\label{subsec2}

If $\q\neq-1$, then $A_{\q}$ is not symmetric. But $A_{\q}$ is a Frobenius
algebra. We define bilinear form by
$$
\langle a,\; b\rangle=\left\{\begin{array}{ll}
1,& \qquad\mbox{if }ab=\alpha_{1}\beta_{1} \mbox{ or } \alpha_{2}\beta_{2};\\
0,& \qquad\mbox{otherwise}.
\end{array}\right.
$$
Then the corresponding semisimple Nakayama automorphism $\nu$ is given by
$$
\begin{tabular*}{10cm}{@{\extracolsep{\fill}}l|lllllllr}
$a\in\B$ & $e_{1}$ &  $e_{2}$ & $\quad \alpha_{1}$ & $\quad \alpha_{2}$ &
$\quad \beta_{1}$ & $\quad \beta_{2}$ & $\alpha_{1}\beta_{1}$  & $\alpha_{2}\beta_{2}$\\
\hline $\; \nu(a)$ & $e_{1}$ &  $e_{2}$ & $-\q\alpha_{1}$ & $-\q\alpha_{2}$ &
$-\q^{-1}\beta_{1}$ & $-\q^{-1}\beta_{2}$ & $\alpha_{1}\beta_{1}$  & $\alpha_{2}\beta_{2}$
\end{tabular*}
$$
That is $\langle a,\; b\rangle=\langle b,\; \nu(a)\rangle$, for any
$a, b\in A_{\q}$.
In \cite{LZZ} and \cite{Vo}, the authors proved that the Hochschild
cohomology ring of a Frobenius algebra with semisimple Nakayama
automorphism is a BV algebra in different ways. For the algebra
$A_{\q}$, we can define an automorphism $\tilde{(\;)}$ by
$$
\begin{tabular*}{10cm}{@{\extracolsep{\fill}}l|lllllllr}
$a\in\B$ &\; $e_{1}$ &  $e_{2}$ & $\alpha_{1}$ & $\alpha_{2}$ &
$\quad \beta_{1}$ & $\quad \beta_{2}$ & $\alpha_{1}\beta_{1}$  & $\alpha_{2}\beta_{2}$\\
\hline $\; \quad\tilde{a}$ & $\alpha_{1}\beta_{1}$ &  $\alpha_{2}\beta_{2}$ &
$\beta_{1}$ & $\beta_{2}$ & $-\q\alpha_{1}$ & $-\q\alpha_{2}$ &\; $e_{1}$  & $e_{2}$
\end{tabular*}
$$
then
$$
\langle a,\; b\rangle=\left\{\begin{array}{ll}
1,& \qquad\mbox{if } b=\tilde{a};\\
0,& \qquad\mbox{otherwise},
\end{array}\right.
$$
for $a, b\in \B$. Thus, we can calculate $\Delta(\alpha)$ by
$$
\begin{aligned}
&\Delta(\alpha)(a_1\boti\dots\boti a_{n-1})\\
&=\sum_{b\in\B_{1}}\Big\langle\sum_{i=1}^n(-1)^{i(n-1)}
\alpha(a_{i}\boti\dots\boti a_{n-1}\boti\tilde{b}\boti\nu(a_1)\boti
\dots\boti\nu(a_{i-1})),\; 1\Big\rangle b,
\end{aligned}
$$
for any $\alpha\in HH^{n}(A_{\q})$, where $\B_{1}=\{e_{i},\, \alpha_{i},\,
\beta_{i}\mid i=1, 2\}$.

Now, we can give the BV operator on $HH^{\ast}(A_{\q})$ for
Frobenius algebra $A_{\q}$. Firstly, if $\q$ is not a root of unity,
we get the BV algebraic structure on $HH^{\ast}(A_{\q})$ as the following
theorem and corollary by direct calculation.

\begin{theorem}\label{thm6.9}
Let $A_{\q}$ be the quantum zigzag algebra, where $\q$ is not a root of unity.
Then the BV operator $\Delta$ on Hochschild cohomology ring
$$
HH^{\ast}(A_{\q})\cong\kk[z_{1}, z_{2}]/(z_{1}^{2}, z_{1}z_{2}, z_{2}^{2})
\times_{\kk}\w(u_{1}, u_{2}),
$$
is zero on homogeneous generators and their product except:
$$
\Delta(u_{1})=\Delta(u_{2})=1, \qquad\qquad
\Delta(u_{1}u_{2})=u_{2}-u_{1}.
$$
\end{theorem}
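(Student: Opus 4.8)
The plan is to reduce the statement to three explicit cocycle computations. Since $\q$ is not a root of unity, Proposition~\ref{prop4.2} (together with the descriptions of $HH^{0}$ and $HH^{1}$ in Section~\ref{coh} and Theorem~\ref{thm5.3}) shows that $HH^{\ast}(A_{\q})$ is concentrated in degrees $0,1,2$, with $\kk$-bases $\{\widehat{1},\widehat{z}_{1},\widehat{z}_{2}\}$, $\{\widehat{u}_{1},\widehat{u}_{2}\}$ and $\{\widehat{u}_{1}\sqcup\widehat{u}_{2}\}$. Because $\Delta$ lowers degree by $1$, it vanishes identically on $HH^{0}$, so $\Delta(\widehat{z}_{1})=\Delta(\widehat{z}_{2})=0$; and since every pairwise product of the generators $\widehat{z}_{1},\widehat{z}_{2},\widehat{u}_{1},\widehat{u}_{2}$ is $0$ in $HH^{\ast}(A_{\q})$ except $\widehat{u}_{1}\sqcup\widehat{u}_{2}$, the operator $\Delta$ is trivially zero on all those products. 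Hence it remains only to compute $\Delta(\widehat{u}_{1}),\Delta(\widehat{u}_{2})\in HH^{0}$ and $\Delta(\widehat{u}_{1}\sqcup\widehat{u}_{2})\in HH^{1}$.

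To evaluate these I would use the machinery of Section~\ref{BV}: transport a representing cocycle from $\mathbb{P}=(P_{m},d_{m})$ to the reduced bar resolution $\bar{\mathbb{B}}=(\bar B_{m},\bar d_{m})$ via $\Psi=(\Psi_{m})_{m\geq 0}$, apply Volkov's formula for $\Delta$ on $\bar{\mathbb{B}}$ (the one written in terms of $1$, the twist $\widetilde{(\ )}$ and the semisimple Nakayama automorphism $\nu$, which is available because $A_{\q}$ is Frobenius with semisimple Nakayama automorphism), and pull the result back along $\Phi=(\Phi_{m})_{m\geq 0}$, using $\Delta(f)=\Delta(f\circ\Psi_{m})\circ\Phi_{m-1}$ for $f\in HH^{m}(A_{\q})$. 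For $\Delta(\widehat{u}_{i})$ one evaluates at the free generators $\ok(f^{0}_{(j,0)})\otimes\tk(f^{0}_{(j,0)})=e_{j}\otimes e_{j}$ of $P_{0}$; since $\Phi_{0}(e_{j}\otimes e_{j})=e_{j}\boti e_{j}$ and the degree-$1$ instance of Volkov's formula reduces to $\sum_{b\in\B_{1}}\langle\widehat{u}_{i}(\Psi_{1}(\ok(\tilde b)\boti\tilde b\boti\tk(\tilde b))),\,1\rangle\,b$, only the terms $b=e_{1},e_{2}$ survive (for which $\widetilde{e_{j}}=\alpha_{j}\beta_{j}$ and $\Psi_{1}(\ok(\alpha_{j}\beta_{j})\boti\alpha_{j}\beta_{j}\boti\tk(\alpha_{j}\beta_{j}))=\alpha_{j}\otimes\tk(\alpha_{j}\beta_{j})+\ok(\alpha_{j}\beta_{j})\otimes\beta_{j}$), and reading off the coefficient of the socle element yields $\Delta(\widehat{u}_{i})(e_{j}\otimes e_{j})=e_{j}$, i.e.\ $\Delta(\widehat{u}_{1})=\Delta(\widehat{u}_{2})=\widehat{1}$.

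For $\Delta(\widehat{u}_{1}\sqcup\widehat{u}_{2})$ one repeats this one degree up, replacing $\widehat{u}_{1}\sqcup\widehat{u}_{2}$ by its cocycle $-\big((\alpha_{1}\beta_{1},f^{2}_{(1,1)})+(\alpha_{2}\beta_{2},f^{2}_{(2,1)})\big)$ from Proposition~\ref{pro5.2}. One evaluates $\Delta$ of the transported cocycle on the free generators of $P_{1}$, using $\Phi_{1}(\ok(f^{1}_{(i,1)})\otimes\tk(f^{1}_{(i,1)}))=\ok(\alpha_{i})\boti\alpha_{i}\boti\tk(\alpha_{i})$ and $\Phi_{1}(\ok(f^{1}_{(i,0)})\otimes\tk(f^{1}_{(i,0)}))=\ok(\beta_{i-1})\boti\beta_{i-1}\boti\tk(\beta_{i-1})$, the $n=2$ instance of Volkov's formula (sign $(-1)^{i'}$, one $\nu$-twist with $\nu(\alpha_{i})=-\q\alpha_{i}$, $\nu(\beta_{i})=-\q^{-1}\beta_{i}$), and the explicit values of $\Psi_{2}$ recorded in Section~\ref{BV}. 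The only cases of $\Psi_{2}$ that hit the generators $f^{2}_{(j,1)}$ — on which the chosen cocycle is supported — are a short list (essentially $(a_{1},a_{2})=(\beta_{i},\alpha_{i})$ and $(\beta_{i},\alpha_{i}\beta_{i})$), so the sums collapse to a handful of terms, and after tracking the signs and the single power of $\q$ one gets $\Delta(\widehat{u}_{1}\sqcup\widehat{u}_{2})=\widehat{u}_{2}-\widehat{u}_{1}$. The relation $\Delta\circ\Delta=0$ is then an automatic consistency check, since $\Delta(\widehat{u}_{2}-\widehat{u}_{1})=\widehat{1}-\widehat{1}=0$.

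The main obstacle is purely the bookkeeping in the degree-$2$ step: $\Psi$ has no closed formula, the image under $\Psi_{1}$ or $\Psi_{2}$ of a tensor such as $e_{j}\otimes\beta_{j}$ may sit in a nonobvious direct summand of $P_{m}$ (so one must first identify the correct $f^{m}_{(\ast,\ast)}$ before applying the representing cochain), and the $\q$-powers together with the signs $(-1)^{i'(n-1)}$ coming from Volkov's formula and from $\nu$ must be combined carefully. Since $HH^{m}(A_{\q})=0$ for $m\geq 3$, only finitely many such cases occur, so the computation, while delicate, is short; it runs in parallel with the symmetric case of §\ref{subsec1}, now with Tradler's symmetrizing form replaced by the Frobenius form and its Nakayama twist.
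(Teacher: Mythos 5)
Your proposal is correct and follows essentially the same route as the paper: reduce to the three nontrivial evaluations $\Delta(\widehat{u}_{1})$, $\Delta(\widehat{u}_{2})$, $\Delta(\widehat{u}_{1}\sqcup\widehat{u}_{2})$ using the degree reasons and the vanishing of the other products, then compute them via the comparison morphisms $\Phi$, $\Psi$ and Volkov's formula for the Frobenius form with Nakayama twist, evaluating on the free generators $e_{j}\otimes e_{j}$ of $P_{0}$ and $\ok(\alpha_{i})\otimes\tk(\alpha_{i})$, $\ok(\beta_{i})\otimes\tk(\beta_{i})$ of $P_{1}$. The intermediate values you predict (for instance $\Delta(\widehat{u}_{i})(e_{j}\otimes e_{j})=e_{j}$ and the sign pattern giving $\widehat{u}_{2}-\widehat{u}_{1}$) agree with the paper's computation.
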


\begin{proof}
Since $\Delta$ is degree $-1$, $\Delta(f)=0$ for $f\in\{z_{1}, z_{2}\}$.
If $f\in\{u_{j}, z_{i}u_{j}\mid i, j=1, 2\}$, note that
$$
\begin{aligned}
\Delta(f)(e_{i}\otimes e_{i})
&=\Delta(f\circ\Psi_{1})\Phi_{0}(e_{i}\otimes e_{i})\\
&=\sum_{b\in\B_{1}}\langle f\circ\Psi_{1}(\ok(\tilde{b})\boti
\tilde{b}\boti\tk(a)),\; 1\rangle b\\
&=-\q\langle f(\ok(\alpha_{i})\otimes\tk(\alpha_{i})),\; 1\rangle\beta_{i}
+\langle f(\ok(\beta_{i})\otimes\tk(\beta_{i})),\; 1\rangle\alpha_{i}\\
&\qquad+\langle f(\alpha_{i}\otimes\tk(\beta_{i})+\ok(\alpha_{i})
\otimes\beta_{i}),\;  1\rangle e_{i},
\end{aligned}
$$
we get $\Delta(u_{1})(e_{i}\otimes e_{i})=\Delta(u_{2})(e_{i}\otimes e_{i})=e_{i}$,
i.e., $\Delta(u_{1})=\Delta(u_{2})=1$.
Similarly, one can check that $\Delta(z_{i}u_{j})=0$ for any $i, j=1, 2$.
If $f\in HH^{\ast}(A_{\q})$ with degree 2, that is,
$f=u_{1}u_{2}$, then
$$
\begin{aligned}
&\Delta(f)(\ok(\alpha_{i})\otimes\tk(\alpha_{i}))\\
=&\Delta(f\circ\Psi_{2})(\ok(\alpha_{i})\boti\alpha_{i}
\boti\tk(\alpha_{i}))\\
=&\sum_{b\in\B_{1}}\langle f\circ\Psi_{2}(\ok(\tilde{b})\boti\tilde{b}\boti\nu
(\alpha_{i})\boti\tk(\nu(\alpha_{i}))-\ok(\alpha_{i})\boti\alpha_{i}
\boti\tilde{b}\boti\tk(\tilde{b})),\; 1\rangle b\\
=&-\langle f(\alpha_{i}\otimes\tk(f^{2}_{(i+1,1)})-\ok(f^{2}_{(i,2)})
\otimes\beta_{i+1}),\;  1\rangle e_{i}\\
&-\langle f(\ok(f^{2}_{(i+1,1)}\otimes\tk(f^{2}_{(i+1,1)})),\;
1\rangle\alpha_{i}+\q^{2}\langle f(\ok(f^{2}_{(i+1,2)}\otimes
\tk(f^{2}_{(i+1,2)})),\; 1\rangle\beta_{i+1}\\
&-\langle f(\ok(f^{2}_{(i,2)}\otimes\beta_{i+1}),\; 1\rangle e_{i+1}
+\q\langle f(\ok(f^{2}_{(i,2)}\otimes\tk(f^{2}_{(i,2)})),\; 1\rangle\beta_{i+1},
\end{aligned}
$$
and
$$
\begin{aligned}
&\Delta(f)(\ok(\beta_{i})\otimes\tk(\beta_{i}))\\
=&-\q^{-1}\langle f(\alpha_{i-1}\otimes\tk(f^{2}_{(i,0)}),\;  1\rangle
e_{i}-\q^{-1}\langle f(\ok(f^{2}_{(i,0)}\otimes\tk(f^{2}_{(i,0)}),\;  1\rangle \alpha_{i+1}\\
&+\q^{-1}\langle f(\ok(\alpha_{i+1}\otimes\tk(f^{2}_{(i+2,0)})
-\ok(f^{2}_{(i+1,1)}\otimes\beta_{i})),\; 1\rangle e_{i}\\
&-\langle f(\ok(f^{2}_{(i+1,0)}\otimes\tk(f^{2}_{(i+1,0)}),\; 1\rangle\alpha_{i+1}
+\langle f(\ok(f^{2}_{(i+1,1)}\otimes\tk(f^{2}_{(i+,1)})),\; 1\rangle\beta_{i}.
\end{aligned}
$$
That is, $\Delta(u_{1}u_{2})(\ok(\alpha_{i})\otimes\tk(\alpha_{i}))=\alpha_{i}$ and
$\Delta(u_{1}u_{2})(\ok(\beta_{i})\otimes\tk(\beta_{i}))=-\beta_{i}$. Thus,
$\Delta(u_{1}u_{2})=u_{2}-u_{1}$. The proof is complete.  \qed
\end{proof}

\begin{corollary} \label{cor6.10}
Let $A_{\q}$ be the quantum zigzag algebra, where $\q$ is not root of unity.
Then the BV algebra
$(HH^{\ast}(A_{\q})$,\, $\sqcup,\, [\ \,,\,\ ],\, \Delta)$ is isomorphic to
$$
\kk[z_{1}, z_{2}]/(z_{1}^{2}, z_{1}z_{2}, z_{2}^{2})
\times_{\kk}\w(u_{1}, u_{2}),
$$
where the Gerstenhaber bracket is zero for all pairs of
homogeneous generators except:
$$
[z_{i}, u_{j}]=-z_{i},   \qquad\mbox{ for any } 1\leq i, j\leq 2,
$$
and the BV operator is zero on homogeneous generators and their product except:
$$
\Delta(u_{1})=\Delta(u_{2})=1, \qquad\qquad
\Delta(u_{1}u_{2})=u_{2}-u_{1}.
$$
\end{corollary}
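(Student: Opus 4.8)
The plan is to deduce the statement by combining three ingredients that are already at our disposal: the description of the cup-product ring in Theorem~\ref{thm5.3}, the fact that a Batalin--Vilkovisky structure exists on $HH^{\ast}(A_{\q})$, and the explicit BV operator $\Delta$ computed in Theorem~\ref{thm6.9}. Since for $\q$ not a root of unity the algebra $A_{\q}$ is a Frobenius algebra with semisimple Nakayama automorphism (the automorphism $\nu$ displayed in Subsection~\ref{subsec2}), the results of \cite{LZZ} and \cite{Vo} guarantee that $(HH^{\ast}(A_{\q}),\,\sqcup,\,[\ ,\ ],\,\Delta)$ is indeed a BV algebra; in particular the defining BV identity $[a,b]=-(-1)^{(|a|-1)|b|}\bigl(\Delta(a\sqcup b)-\Delta(a)\sqcup b-(-1)^{|a|}a\sqcup\Delta(b)\bigr)$ holds. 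The underlying graded algebra is exactly $\kk[z_{1},z_{2}]/(z_{1}^{2},z_{1}z_{2},z_{2}^{2})\times_{\kk}\w(u_{1},u_{2})$ by Theorem~\ref{thm5.3}, and $\Delta$ is zero on all generators and their products except $\Delta(u_{1})=\Delta(u_{2})=1$ and $\Delta(u_{1}u_{2})=u_{2}-u_{1}$ by Theorem~\ref{thm6.9}. So only the Gerstenhaber bracket remains to be pinned down.

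Next I would use the Poisson rule $[f\sqcup g,h]=[f,h]\sqcup g+(-1)^{|f|(|h|-1)}f\sqcup[g,h]$, which is part of the Gerstenhaber-algebra axioms, to reduce the computation of $[\ ,\ ]$ to its values on the four algebra generators $z_{1},z_{2},u_{1},u_{2}$. Each such value is then read off from the BV identity together with the cup-product relations $z_{i}\sqcup z_{j}=0$, $z_{i}\sqcup u_{j}=0$, $u_{i}\sqcup u_{i}=0$, $u_{1}\sqcup u_{2}=-u_{2}\sqcup u_{1}$ and the values of $\Delta$ above. Concretely: $[z_{1},z_{2}]$ lies in degree $-1$, hence vanishes; for $i,j\in\{1,2\}$ one has $z_{i}\sqcup u_{j}=0$, so $[z_{i},u_{j}]=-(-1)^{-1}\bigl(\Delta(z_{i}\sqcup u_{j})-\Delta(z_{i})\sqcup u_{j}-z_{i}\sqcup\Delta(u_{j})\bigr)=-(z_{i}\sqcup 1)=-z_{i}$; and $[u_{1},u_{2}]=-\bigl(\Delta(u_{1}\sqcup u_{2})-\Delta(u_{1})\sqcup u_{2}+u_{1}\sqcup\Delta(u_{2})\bigr)=-\bigl((u_{2}-u_{1})-u_{2}+u_{1}\bigr)=0$, while $[u_{i},u_{i}]=-\bigl(0-u_{i}+u_{i}\bigr)=0$. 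This reproduces precisely the bracket table asserted in the corollary, and together with Theorem~\ref{thm5.3} and Theorem~\ref{thm6.9} finishes the proof.

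The main point requiring attention is not a genuine obstacle but rather careful sign bookkeeping: one must keep track of the factors $(-1)^{(|a|-1)|b|}$ and $(-1)^{|a|}$ separately in the degree-$0$ and degree-$1$ cases, and one must invoke the BV identity only after the existence of the BV structure has been secured from \cite{LZZ} and \cite{Vo}. No relations of the target algebra need to be re-verified here, since they are already established in Theorem~\ref{thm5.3}, and the operator $\Delta$ and the bracket descend to the relevant quotient automatically, being defined on cocycles. Thus the corollary is essentially a formal consequence of the two preceding results plus the graded Leibniz identity.
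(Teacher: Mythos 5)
Your proposal is correct and follows essentially the same route as the paper: the corollary is obtained by combining the ring structure from Theorem~\ref{thm5.3} with the BV operator computed in Theorem~\ref{thm6.9}, and then reading off the bracket on generators from the BV identity (your sign bookkeeping for $[z_i,u_j]=-z_i$, $[u_1,u_2]=0$ and $[u_i,u_i]=0$ checks out, and the degree argument disposes of $[z_i,z_j]$).
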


Similarly, for the case the case of $\q=1$, we have the following theorem.

\begin{theorem}\label{thm6.11}
The BV algebra
$(HH^{\ast}(A_{1})$,\, $\sqcup,\, [\ \,,\,\ ],\, \Delta)$ is isomorphic to
$$
\left(\kk[z_{1}, z_{2}]/(z_{1}^{2},
z_{1}z_{2}, z_{2}^{2})\times_{\kk}\w(u_{1}, u_{2}, u_{3},
u_{4})\right)[w_{0}, w_{1}, w_{2}]/I,
$$
where the Gerstenhaber bracket is zero for all pairs of
homogeneous generators except:
\begin{alignat*}{3}
&[z_{1}, u_{2}]=[z_{1}, u_{3}]=-z_{1},&\qquad &[z_{2}, u_{2}]=[z_{2}, u_{3}]=-z_{2},&
\qquad&[z_{1}, w_{0}]=2u_{1},\\
&[z_{1}, w_{1}]=u_{3}-u_{2},& &[z_{1}, w_{2}]=-2u_{4},&
&[u_{1}, u_{2}]=u_{1}, \\
&[u_{1}, u_{3}]=-u_{1}, & &[u_{1}, u_{4}]=u_{3}-u_{2},&
&[u_{2}, u_{4}]=u_{4},\\
&[u_{3}, u_{4}]=-u_{4},& &[u_{1}, w_{1}]=-w_{0},&
&[u_{1}, w_{2}]=-2w_{1},\\
&[u_{2}, w_{0}]=-2w_{0},& &[u_{2}, w_{1}]=-w_{1},&
&[u_{3}, w_{1}]=-w_{1},\\
&[u_{3}, w_{2}]=-2w_{2},& &[u_{4}, w_{0}]=-2w_{1},&
&[u_{4}, w_{1}]=-w_{2},
\end{alignat*}
the BV operator is zero on homogeneous generators and their product except:
\begin{alignat*}{3}
&\Delta(u_{2})=\Delta(u_{3})=1, &\qquad &\Delta(z_{1}w_{0})=-2u_{1},
&\qquad &\Delta(z_{1}w_{1})=u_{2}-u_{3},\\
&\Delta(z_{1}w_{2})=2u_{4},& &\Delta(u_{1}u_{2})=-2u_{1},&
&\Delta(u_{1}u_{4})=u_{2}-u_{3},\\
&\Delta(u_{2}u_{3})=u_{3}-u_{2}, & &\Delta(u_{3}u_{4})=2u_{4}, &
&\Delta(u_{1}w_{1})=w_{0},\\
&\Delta(u_{1}w_{2})=2w_{1},& &\Delta(u_{2}w_{0})=3w_{0},&
&\Delta(u_{2}w_{1})=2w_{1},\\
&\Delta(u_{2}w_{2})=w_{2}, & &\Delta(u_{3}w_{0})=w_{0}, &
&\Delta(u_{3}w_{1})=2w_{1},\\
&\Delta(u_{3}w_{2})=3w_{2}, & &\Delta(u_{4}w_{0})=2w_{1}, &
&\Delta(u_{4}w_{1})=w_{2},
\end{alignat*}
where the ideal $I$ is given by
$$
\begin{aligned}
&\big(u_{1}u_{3},\; u_{2}u_{4},\; z_{2}w_{0},\; z_{2}w_{1},\;
u_{1}u_{2}-z_{1}w_{0},\; u_{1}u_{4}-z_{1}w_{1},\; u_{1}u_{4}-u_{3}u_{2},\;
u_{3}u_{4}-z_{1}w_{2},\\
&\qquad z_{2}w_{2},\; u_{1}w_{1}-u_{3}w_{0},\; u_{1}w_{2}-u_{3}w_{1},\;
u_{2}w_{1}-u_{4}w_{0},\; u_{2}w_{2}-u_{4}w_{1},\;
w_{1}^{2}-w_{0}w_{2}\big).
\end{aligned}
$$
\end{theorem}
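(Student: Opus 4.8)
The plan is to follow the pattern of Theorems~\ref{thm6.6} and~\ref{thm6.9}: the underlying Gerstenhaber algebra $(HH^{\ast}(A_{1}),\sqcup)$ is already identified by Theorem~\ref{thm5.4} specialised to $\q=1$ (with the signs in the ideal $I$ coming from that specialisation), so the whole content is to compute the BV operator $\Delta$, after which the bracket is forced by the BV identity
$$
[\alpha,\beta]=(-1)^{|\alpha||\beta|+|\alpha|+|\beta|}\big((-1)^{|\alpha|+1}\Delta(\alpha\sqcup\beta)+(-1)^{|\alpha|}\Delta(\alpha)\sqcup\beta+\alpha\sqcup\Delta(\beta)\big).
$$
Since $A_{1}$ is Frobenius with semisimple Nakayama automorphism $\nu$ (the tables for $\nu$ and for $\tilde{(\ )}$ given above), $\Delta$ is computed via the Volkov--LZZ formula
$$
\Delta(\alpha)(a_{1}\boti\cdots\boti a_{n-1})=\sum_{b\in\B_{1}}\Big\langle\sum_{i=1}^{n}(-1)^{i(n-1)}\alpha\big(a_{i}\boti\cdots\boti a_{n-1}\boti\tilde b\boti\nu(a_{1})\boti\cdots\boti\nu(a_{i-1})\big),\,1\Big\rangle\,b,
$$
transported to the minimal resolution through the reduction $\Delta(f)=\Delta(f\circ\Psi_{m})\circ\Phi_{m-1}$ for $f\in HH^{m}(A_{1})$, using the comparison morphisms $\Phi=(\Phi_{m})$ and $\Psi=(\Psi_{m})$ of Lemmas~\ref{lem6.2} and~\ref{lem6.3}.

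First I would invoke the derivation-type identity for $\Delta$ on triple cup products (the display preceding Lemma~\ref{lem6.2}), which holds verbatim here since it is a purely formal consequence of the BV identity and graded commutativity. This reduces the determination of $\Delta$ on all of the infinite-dimensional ring $HH^{\ast}(A_{1})$ to a finite computation: $\Delta$ on the generators $z_{1},z_{2},u_{1},\dots,u_{4},w_{0},w_{1},w_{2}$ and on their pairwise products. As $\Delta$ has degree $-1$ we have $\Delta(z_{i})=0$, and the pairwise products only reach degrees $1,2,3,4$, so only these four degrees need to be treated, exactly as in Lemmas~\ref{lem6.2}--\ref{lem6.5} for $A_{-1}$.

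Next I would carry out these evaluations on the free bimodule generators $\ok(f^{m-1}_{(i,j)})\otimes\tk(f^{m-1}_{(i,j)})$ of $P_{m-1}$: degree $1$ uses $\Phi_{0},\Psi_{1}$ and should give $\Delta(u_{2})=\Delta(u_{3})=1$, $\Delta(u_{1})=\Delta(u_{4})=0$, $\Delta(z_{i}u_{j})=0$; degree $2$ uses $\Phi_{1},\Psi_{2}$ and should give the listed values on $z_{1}w_{j}$ and $u_{i}u_{j}$ (in particular $\Delta(w_{j})=0$, $\Delta(z_{2}w_{j})=\Delta(u_{1}u_{3})=\Delta(u_{2}u_{4})=0$); degree $3$ uses $\Phi_{2},\Psi_{3}$ and should give the listed $\Delta(u_{i}w_{j})$; degree $4$ uses $\Phi_{3},\Psi_{4}$ and should give $\Delta\equiv 0$. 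Feeding these tables into the BV identity yields the stated Gerstenhaber brackets, and copying the ideal $I$ from the $\q=1$ case of Theorem~\ref{thm5.4} completes the statement.

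The main obstacle will be sign bookkeeping. In contrast to the symmetric case, the Nakayama automorphism contributes a factor $-1$ for every $\alpha_{i}$ or $\beta_{i}$ among $\nu(a_{1}),\dots,\nu(a_{i-1})$, and $\tilde{(\ )}$ contributes an extra $-1$ on each $\beta_{i}$; these must be combined correctly with the cyclic signs $(-1)^{i(n-1)}$ and with the signs already carried by $\Psi_{m}$, whose description splits into many cases in degrees $3$ and $4$. Keeping all of this consistent, and using $\Delta\circ\Delta=0$ together with the Leibniz identity as independent checks on the resulting tables, is where the care is needed; the remaining work is the same mechanical evaluation already performed for $A_{-1}$.
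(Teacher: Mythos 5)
Your proposal is correct and follows essentially the same route as the paper: the paper proves Theorem \ref{thm6.9} and Theorem \ref{thm6.13} explicitly by the Volkov--Lambre--Zhou--Zimmermann formula for $\Delta$ (with the Frobenius form, Nakayama automorphism $\nu$ and the map $\tilde{(\;)}$ from Subsection \ref{subsec2}), transported through the comparison morphisms $\Phi,\Psi$, and merely asserts the $\q=1$ case ``similarly,'' which is exactly the computation you outline. Your reduction via the triple-cup-product identity to generators and pairwise products, the degree-by-degree evaluation in degrees $1$ through $4$, and the recovery of the bracket from the BV identity all match the paper's template, including the correct observation that for $\q=1$ the Nakayama automorphism acts by $-1$ on the arrows and so contributes the sign discrepancies with the symmetric case $\q=-1$.
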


Finally, for the case where $\q$ a primite $s$-th $(s\geq3)$ root of unity, similar to
the discussion and calculation of case $\q=1$, we can obtain the following
theorem.

\begin{theorem}\label{thm6.13}
Let $A_{\q}$ be the quantum zigzag algebra, where $\q$ is a primite $s$-th
$(s>2)$ root of unity.

(1) If $s$ is odd, then the BV algebra
$(HH^{\ast}(A_{\q})$,\, $\sqcup,\, [\ \,,\,\ ],\, \Delta)$ is isomorphic to
$$
\kk[z_{1}, z_{2}]/(z_{1}^{2}, z_{1}z_{2}, z_{2}^{2})\times_{\kk}
\left(\w(u_{1}, u_{2})[w_{0}, w_{1}, w_{2}]/(w_{1}^{2}-w_{0}w_{2})\right),
$$
where the Gerstenhaber bracket is zero for all pairs of
homogeneous generators except:
\begin{alignat*}{3}
&[z_{1}, u_{1}]=[z_{1}, u_{2}]=-z_{1},&\qquad &[z_{2}, u_{1}]=[z_{2}, u_{2}]=-z_{2},&
\qquad&[u_{1}, w_{0}]=-2s w_{0},\\
&[u_{1}, w_{1}]=-s w_{1},& &[u_{2}, w_{1}]=-s w_{1},&
&[u_{2}, w_{2}]=-2s w_{2},
\end{alignat*}
and the BV operator is zero on homogeneous generators and their product except:
\begin{alignat*}{3}
&\Delta(u_{1})=1, &\qquad &\Delta(u_{2})=1, &
\qquad &\Delta(u_{1}u_{2})=u_{2}-u_{1},\\
&\Delta(u_{1}w_{0})=(2s+1)w_{0}, & &\Delta(u_{1}w_{1})=(s+1)w_{1}, &
&\Delta(u_{1}w_{2})=w_{2},\\
&\Delta(u_{2}w_{0})=w_{0}, & &\Delta(u_{2}w_{1})=(s+1)w_{1}, &
&\Delta(u_{2}w_{2})=(2s+1)w_{2}.
\end{alignat*}

(2) If $s$ is even, then the BV algebra
$(HH^{\ast}(A_{\q})$,\, $\sqcup,\, [\ \,,\,\ ],\, \Delta)$ is isomorphic to
$$
\kk[z_{1}, z_{2}]/(z_{1}^{2}, z_{1}z_{2}, z_{2}^{2})\times_{\kk}
\left(\w(u_{1}, u_{2})[w_{0}, w_{1}, w_{2}]/(w_{1}^{2}
-\q^{\frac{s^{2}}{4}}w_{0}w_{2})\right),
$$
where the Gerstenhaber bracket is zero for all pairs of
homogeneous generators except:
\begin{alignat*}{3}
&[z_{1}, u_{1}]=[z_{1}, u_{2}]=-z_{1},&\qquad &[z_{2}, u_{1}]=[z_{2}, u_{2}]=-z_{2},&
\qquad&[u_{1}, w_{0}]=-s w_{0},\\
&[u_{1}, w_{1}]=-\frac{s}{2} w_{1},& &[u_{2}, w_{1}]=-\frac{s}{2}w_{1},&
&[u_{2}, w_{2}]=-s w_{2},
\end{alignat*}
and the BV operator is zero on homogeneous generators and their product except:
\begin{alignat*}{3}
&\Delta(u_{1})=1, &\qquad &\Delta(u_{2})=1, &
\qquad &\Delta(u_{1}u_{2})=u_{2}-u_{1},\\
&\Delta(u_{1}w_{0})=(s+1)w_{0}, & &\Delta(u_{1}w_{1})=(\frac{s}{2}+1)w_{1}, &
&\Delta(u_{1}w_{2})=w_{2},\\
&\Delta(u_{2}w_{0})=w_{0}, & &\Delta(u_{2}w_{1})=(\frac{s}{2}+1)w_{1}, &
&\Delta(u_{2}w_{2})=(s+1)w_{2}.
\end{alignat*}
\end{theorem}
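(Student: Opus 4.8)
The plan is to follow the strategy of the symmetric case (Theorem~\ref{thm6.6}) and of $\q=1$ (Theorem~\ref{thm6.11}), now feeding in the Frobenius bilinear form, the semisimple Nakayama automorphism $\nu$ and the twist $\tilde{(\,)}$ from Subsection~\ref{subsec2}. For $\q$ a primitive $s$-th root of unity with $s>2$ the algebra $A_{\q}$ is still a Frobenius algebra with semisimple Nakayama automorphism, so by \cite{Vo} the operator $\Delta$ defined by the displayed formula for $\Delta(\alpha)$ in Subsection~\ref{subsec2} already satisfies $\Delta\circ\Delta=0$ and makes $(HH^{\ast}(A_{\q}),\,\sqcup,\,[\ \,,\,\ ],\,\Delta)$ a BV algebra; in particular no graded-Lie or Poisson axiom needs checking, and the Gerstenhaber bracket is forced by $\Delta$ and $\sqcup$. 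By the derived Leibniz rule for $\Delta$ on triple cup products, $\Delta$ is determined by its values on the generators $z_{1},z_{2},u_{1},u_{2},w_{0},w_{1},w_{2}$ of Theorem~\ref{thm5.5} and on their pairwise cup products. Here $\Delta(z_{i})=0$ and $\Delta(w_{j})=0$ for degree reasons (Proposition~\ref{prop4.4} gives $HH^{2s-1}(A_{\q})=0$ when $s$ is odd and $HH^{s-1}(A_{\q})=0$ when $s$ is even), every product of two $z$'s or of a $z$ with a $u$ or a $w$ vanishes in $HH^{\ast}(A_{\q})$, and $\Delta$ again vanishes on every product of two $w$'s by the same kind of degree argument; so the genuinely new computations are $\Delta(u_{i})$, $\Delta(u_{1}\sqcup u_{2})$ and $\Delta(u_{i}\sqcup w_{j})$ for $i\in\{1,2\}$, $j\in\{0,1,2\}$.

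As in the two previous cases, I would compute these through the comparison morphisms $\Phi$ and $\Psi$ constructed earlier in Section~\ref{BV}, via $\Delta(f)=\Delta(f\circ\Psi_{m})\circ\Phi_{m-1}$ for $f\in HH^{m}(A_{\q})$. The degree-one inputs are quick: evaluating $\Delta(u_{i})$ on $e_{k}\otimes e_{k}$ via $\Psi_{1}$ and $\Phi_{0}$ reproduces $\Delta(u_{1})=\Delta(u_{2})=1$ exactly as in the proof of Theorem~\ref{thm6.9}. The product $\Delta(u_{1}\sqcup u_{2})$ is obtained from the explicit values of $\Psi_{2}$ tabulated in Subsection~\ref{subsec2}, giving $\Delta(u_{1}u_{2})=u_{2}-u_{1}$ after accounting for the scalars $-\q$ and $-\q^{-1}$ introduced by $\tilde{(\,)}$ and $\nu$.

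The technical heart, and the step I expect to be the main obstacle, is the family $\Delta(u_{i}\sqcup w_{j})$. By Theorem~\ref{thm5.5} the class $w_{j}$ lies in degree $2s$ when $s$ is odd and in degree $s$ when $s$ is even, so these products live in degree $2s+1$ or $s+1$, and one must evaluate $f\circ\Psi_{m}$ on bar-resolution elements $Y_{m}=\ok(a_{1})\boti a_{1}\boti\cdots\boti a_{m}\boti\tk(a_{m})$ with $m$ in that range, using the long case-by-case description of $\Psi_{m}$ on the words $X_{m}=a_{1}\boti\cdots\boti a_{m}$ recorded in Section~\ref{BV}. The difficulty is twofold: one must decide which words $X_{m}$ contribute once they are paired with the symmetrizing form and the $\nu$-twisted cyclic shifts are summed, and one must track the powers of $\q$ that accumulate along $\Psi_{m}$. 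These powers are precisely what produces the coefficient $\q^{s^{2}/4}$ in the relation $w_{1}^{2}-\q^{s^{2}/4}w_{0}w_{2}$ when $s$ is even, together with the $s$-dependent integer coefficients $2s$, $2s+1$, $s+1$, $\tfrac{s}{2}$, $\tfrac{s}{2}+1$ in the final formulas. I would treat the $s$ odd case first, where $\q^{s}=1$ collapses many ambiguous exponents, to obtain $\Delta(u_{1}w_{0})=(2s+1)w_{0}$, $\Delta(u_{1}w_{1})=(s+1)w_{1}$, $\Delta(u_{1}w_{2})=w_{2}$ and the mirror identities for $u_{2}$, then adapt the bookkeeping to $s$ even, where the half-integer exponents $\tfrac{s}{2}$ appear.

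Finally, with $\Delta$ completely determined, the Gerstenhaber bracket on $HH^{\ast}(A_{\q})$ is obtained by substituting these values into the BV identity $[\alpha,\beta]=(-1)^{|\alpha||\beta|+|\alpha|+|\beta|}\big((-1)^{|\alpha|+1}\Delta(\alpha\sqcup\beta)+(-1)^{|\alpha|}\Delta(\alpha)\sqcup\beta+\alpha\sqcup\Delta(\beta)\big)$, exactly as in Corollaries~\ref{cor6.7} and~\ref{cor6.10}: for instance $[z_{i},u_{j}]=-z_{i}$ drops out of $\Delta(u_{j})=1$, $\Delta(z_{i})=0$ and $z_{i}\sqcup u_{j}=0$, while the brackets $[u_{i},w_{j}]$ come straight from the values of $\Delta$ on the products $u_{i}\sqcup w_{j}$. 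The underlying graded ring is the one already displayed in Theorem~\ref{thm5.5}, so all that remains is to assemble the two cases $s$ odd and $s$ even.
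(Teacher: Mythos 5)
Your proposal follows essentially the same route as the paper: Volkov's bilinear form with the Nakayama twist $\nu$ and $\tilde{(\;)}$, the comparison morphisms $\Phi$ and $\Psi$, reduction of $\Delta$ to the generators and their pairwise cup products via the Leibniz-type identity for $\Delta$ on triple products, and recovery of the Gerstenhaber bracket from the BV identity; your degree-vanishing arguments for $\Delta(w_j)$ and $\Delta(w_iw_j)$ via Proposition \ref{prop4.4} are a slightly cleaner substitute for the paper's ``by direct calculation.'' The one caveat is that the decisive evaluation of $\Delta(u_i\sqcup w_j)$ through the case list for $\Psi_{2s+1}$ (resp.\ $\Psi_{s+1}$) is only planned rather than carried out --- this is precisely the computation the paper performs with its $\chi_k$ bookkeeping, and it is where the coefficients $2s{+}1$, $s{+}1$, $\tfrac{s}{2}{+}1$, etc.\ arise (note, though, that the factor $\q^{s^{2}/4}$ in the relation $w_1^2-\q^{s^{2}/4}w_0w_2$ was already fixed by the cup product in Theorem \ref{thm5.5}, not by $\Delta$).
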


\begin{proof}
Suppose that $s$ is odd. Then $\Delta(f)=0$ for $f\in\{z_{1}, z_{2}\}$.
If $f\in\{u_{j}, z_{i}u_{j}\mid i, j=1, 2\}$, note that
$$
\begin{aligned}
\Delta(f)(e_{i}\otimes e_{i})
&=\Delta(f\circ\Psi_{1})\Phi_{0}(e_{i}\otimes e_{i})\\
&=\sum_{b\in\B_{1}}\langle f\circ\Psi_{1}(\ok(\tilde{b})\boti
\tilde{b}\boti\tk(\tilde{b})),\; 1\rangle b\\
&=-\q\langle f(\ok(\alpha_{i})\otimes\tk (\alpha_{i})),\; 1\rangle\beta_{i}
+\langle f(\ok(\beta_{i})\otimes\tk (\beta_{i})),\; 1\rangle\alpha_{i}\\
&\qquad+\langle f(\alpha_{i}\otimes\tk(\beta_{i})+\ok(\alpha_{i})
\otimes\beta_{i}),\;  1\rangle e_{i},
\end{aligned}
$$
we get $\Delta(u_{1})(e_{i}\otimes e_{i})=\Delta(u_{2})(e_{i}\otimes e_{i})=e_{i}$,
i.e., $\Delta(u_{1})=\Delta(u_{2})=1$.
Similarly, one can check that $\Delta(z_{i}u_{j})=0$ for any $i, j=1, 2$.

If $f\in HH^{\ast}(A_{\q})$ with degree 2, that is, $f=u_{1}u_{2}$, then
$$
\begin{aligned}
&\Delta(f)(\ok(\alpha_{i})\otimes\tk (\alpha_{i}))\\
=&\Delta(f\circ\Psi_{2})(\ok(\alpha_{i})\boti\alpha_{i}
\boti\tk (\alpha_{i}))\\
=&\sum_{b\in\B_{1}}\langle f\circ\Psi_{2}( \ok
(\tilde{b})\boti\tilde{b}\boti\nu
(\alpha_{i})\boti\tk (\nu(\alpha_{i}))- \ok (\alpha_{i})\boti\alpha_{i}
\boti\tilde{b}\boti\tk (\tilde{b})),\; 1\rangle b\\
=&-\langle f(\alpha_{i}\otimes\tk (f^{2}_{(i+1,1)})- \ok (f^{2}_{(i,2)})
\otimes\beta_{i+1}),\;  1\rangle e_{i}\\
&-\langle f( \ok (f^{2}_{(i+1,1)}\otimes\tk (f^{2}_{(i+1,1)})),\;
1\rangle\alpha_{i}+\q^{2}\langle f( \ok (f^{2}_{(i+1,2)}\otimes
\tk (f^{2}_{(i+1,2)})),\; 1\rangle\beta_{i+1}\\
&-\langle f( \ok (f^{2}_{(i,2)}\otimes\beta_{i+1}),\; 1\rangle e_{i+1}
+\q\langle f( \ok (f^{2}_{(i,2)}\otimes\tk (f^{2}_{(i,2)})),\; 1\rangle\beta_{i+1},
\end{aligned}
$$
and
$$
\begin{aligned}
&\Delta(f)( \ok (\beta_{i})\otimes\tk (\beta_{i}))\\
=&-\q^{-1}\langle f(\alpha_{i-1}\otimes\tk (f^{2}_{(i,0)}),\;  1\rangle
e_{i}-\q^{-1}\langle f( \ok (f^{2}_{(i,0)}\otimes\tk (f^{2}_{(i,0)}),\;  1\rangle \alpha_{i+1}\\
&+\q^{-1}\langle f( \ok (\alpha_{i+1}\otimes\tk (f^{2}_{(i+2,0)})
- \ok (f^{2}_{(i+1,1)}\otimes\beta_{i})),\; 1\rangle e_{i}\\
&-\langle f( \ok (f^{2}_{(i+1,0)}\otimes\tk (f^{2}_{(i+1,0)}),\; 1\rangle\alpha_{i+1}
+\langle f( \ok (f^{2}_{(i+1,1)}\otimes\tk (f^{2}_{(i+,1)})),\; 1\rangle\beta_{i}.
\end{aligned}
$$
That is, $\Delta(u_{1}u_{2})(\ok(\alpha_{i})\otimes\tk (\alpha_{i}))=\alpha_{i}$
and $\Delta(u_{1}u_{2})( \ok (\beta_{i})\otimes\tk (\beta_{i}))=-\beta_{i}$.
Hence, $\Delta(u_{1}u_{2})=u_{2}-u_{1}$.
If $f\in\{z_{i}w_{j},w_{j}\mid i=1, 2,\, j=0, 1, 2\}$ or
$f\in\{w_{i}w_{j}\mid i,\, j=0,1, 2\}$, by direct calculation, we get $\Delta(f)=0$.

If $f\in HH^{\ast}(A_{\q})$ with degree $2s+1$, that is, $f\in\{u_{i}w_{j}\mid i=1,2,\,
j=0,1, 2\}$. Denote $\chi_{k}:=a_{k}\otimes\cdots\otimes a_{2s}\otimes\tilde{b}\otimes
\nu(a_{1})\otimes\cdots\otimes\nu(a_{k-1})$ for $1\leq k\leq 2s$. Note that
$\Big\langle u_{i}w_{j}\circ\Psi_{2s+1}(\ok(a_{k})\otimes\chi_{k}\otimes
\tk(\nu(a_{k-1}))),\; 1\Big\rangle=0$ except
\begin{itemize}
\item[(i)] if $\chi_{k}=\beta_{k}\otimes\cdots\otimes\beta_{k-l+1}\otimes
    \alpha_{k-1+1}\beta_{k-1+1}\otimes\nu(\beta_{k-l})\otimes\cdots\otimes
    \nu(\beta_{k+2s-js-2l-1})\otimes\nu(\alpha_{k+2s-js-2l-1})\otimes\cdots
    \otimes\nu(\alpha_{k+2s-2l-2})$,
$$
\Big\langle
u_{1}w_{j}\circ\Psi_{2s+1}(\ok(a_{l})\otimes\chi_{l}\otimes\tk(\nu(a_{l-1}))),\;
1\Big\rangle=1;
$$
\item[(ii)] if $\chi_{k}=\beta_{k}\otimes\cdots\otimes\beta_{js+k-1}\otimes
    \alpha_{js+k-1}\otimes\cdots\cdots\otimes\alpha_{js+k-l}\otimes\alpha_{js+k-l-1}
    \beta_{js+k-l-1}\otimes$
    $\nu(\alpha_{js+k-l-1})\otimes\cdots\otimes\nu(\alpha_{2s+k-2l-2})$,
$$
\Big\langle
u_{2}w_{j}\circ\Psi_{2s+1}(\ok(a_{k})\otimes\chi_{k}\otimes
\tk(\nu(a_{k-1}))),\; 1\Big\rangle=1,
$$
\end{itemize}
we get
$$
\begin{aligned}
\Delta(u_{1}w_{j})(\ok(f^{2s}_{(i,j')})\otimes\tk(f^{2s}_{(i,j')}))
&=\left\{\begin{array}{lll}
(2s-js+1)e_{i},& j'=js;\\
0,& \mbox{otherwise},
\end{array}\right.\\
&=\left\{\begin{array}{lll}
e_{i},& j'=j=0;\\
(s+1)e_{i},& j=1,j'=s;\\
(2s+1)e_{i},& j=2,j'=2s;\\
0,& \mbox{otherwise},
\end{array}\right.\\
\end{aligned}
$$
and
$$\begin{aligned}
\Delta(u_{2}w_{j})(\ok(f^{2s+1}_{(i,j')})\otimes\tk(f^{2s+1}_{(i,j')}))
&=\left\{\begin{array}{lll}
(js+1)e_{i},& j'=2s-js;\\
0,& \mbox{otherwise},
\end{array}\right.\\
&=\left\{\begin{array}{lll}
e_{i},& j'=2s,j=0;\\
(s+1)e_{i},& j'=s,j=1;\\
(2s+1)e_{i},& j'=0,j=2;\\
0,& \mbox{otherwise}.
\end{array}\right.
\end{aligned}
$$
That is to say,
$$
\begin{aligned}
&\Delta(u_{1}w_{0})=(2s+1)w_{0},\qquad &\Delta(u_{1}w_{1})=(r+1)w_{1},\qquad
&\Delta(u_{1}w_{2})=w_{2},\\
&\Delta(u_{2}w_{0})=w_{0}, &\Delta(u_{2}w_{1})=(s+1)w_{1}, &\Delta(u_{2}w_{2})=(2s+1)w_{2}.
\end{aligned}
$$
Using the BV operator $\Delta$, we obtain the Gerstenhaber bracket which is given in
the theorem. If $s$ is even, the proof is similar and will not be repeated here.
\end{proof}

\end{document}